\newtheorem{theorem}{Theorem}[section]
\newtheorem{lemma}[theorem]{Lemma}
\newtheorem{prop}[theorem]{Proposition}
\newtheorem{corollary}[theorem]{Corollary}
\newtheorem{question}[theorem]{Question}
\theoremstyle{definition}
\newtheorem{notation}[theorem]{Notation}
\newtheorem{example}[theorem]{Example}
\newtheorem{claim}{Claim}
\theoremstyle{remark}
\newtheorem{remark}[theorem]{Remark}
\numberwithin{equation}{section}
\newcommand{\D}{\mathbb{D}}
\newcommand{\cD}{\overline{\D}}
\newcommand{\E}{\mathbb{E}}
\newcommand{\C}{\mathbb{C}}
\newcommand{\T}{\mathbb{T}}
\newcommand{\Z}{\mathbb{Z}}
\newcommand{\ip}[2]{\langle #1, #2 \rangle}
\newcommand{\mcH}{\mathcal{H}}
\newcommand{\mcK}{\mathcal{K}}
\newcommand{\mcQ}{\mathcal{Q}}
\newcommand{\mcP}{\mathcal{P}}
\newcommand{\mcR}{\mathcal{R}}
\newcommand{\mcV}{\mathcal{V}}
\newcommand{\Kphi}{\mcK_{\phi}}
\newcommand{\Hphi}{\mcH_{\phi}}
\newcommand{\supp}{\text{supp}}
\newcommand\xqed[1]{%
  \leavevmode\unskip\penalty9999 \hbox{}\nobreak\hfill
  \quad\hbox{#1}}
\newcommand\EOEx{\xqed{$\diamond$}}
\newcommand{\kgeq}{\succcurlyeq}
\newcommand{\kleq}{\preccurlyeq}
\newcommand{\Pzp}{\mcP_{d,p}^0}
\newcommand{\Pop}{\mcP_{d,p}^1}
\newcommand{\Ptp}{\mcP_{d,p}^2}
\title[Inner functions on the bidisk]{Inner functions on the bidisk
  and associated Hilbert spaces} 
\author{Kelly Bickel}
\address{Washington University in St. Louis, St. Louis, MO, 63130}
\email{kbickel@math.wustl.edu}
\author{Greg Knese}
\address{University of Alabama, Tuscaloosa, AL, 35487-0350}
\email{geknese@bama.ua.edu}
\date{\today}
\keywords{inner function, bidisk, polydisk, bidisc, polydisc, Hardy
  space, Agler decomposition}
\thanks{GK supported by NSF grant DMS-1048775}
\subjclass{Primary 47A57; Secondary 30C15, 42B05}
\begin{document}
\bibliographystyle{apalike}

\begin{abstract} 
Matrix valued inner functions on the bidisk have a number of natural
subspaces of the Hardy space on the torus associated to them.  We
study their relationship to Agler decompositions, regularity up to the
boundary, and restriction maps into one variable spaces.  We give a
complete description of the important spaces associated to matrix
rational inner functions. The dimension of some of these spaces can be
computed in a straightforward way, and this ends up having an
application to the study of three variable rational inner functions.
Examples are included to highlight the differences between the scalar
and matrix cases.
\end{abstract} 

\maketitle

\section{Introduction}

Inner functions $\phi$ on the unit disk $\D$, their associated Hilbert
spaces $\phi H^2$ and $\mcH_{\phi} = H^2 \ominus \phi H^2$, and the
associated shift $S$ and backward shift $S^*$ operators on these
spaces form a natural and rich area of analysis.  Natural, because by
Beurling's theorem \cite{aB48}, every invariant subspace of the
forward shift on $\ell^2(\mathbb{N})$ is unitarily equivalent to $\phi
H^2$ for some inner $\phi$.  Rich, because if we allow $\phi$ to be
operator valued, then any contractive operator on a separable Hilbert
space can be modeled as $S^*$ on $\mcH_{\phi}$ for some $\phi$.  At
the same time, simple choices for $\phi$ provide interesting examples.
If $\phi$ is a finite Blaschke product, the space $\mcH_{\phi}$ is
finite dimensional and is related to orthogonal polynomials on the
unit circle.  If $\phi(z) = \exp \left( a \frac{z+1}{z-1}\right)$,
$a>0$, then $\mcH_{\phi}$ is isometric to the Paley-Wiener space $PW_a
= \mathcal{F}(L^2(0,a))$ through a change of coordinates to the upper
half-plane.  See \cite{hH64} or \cite{NN86} for the one variable
theory.

Inner functions on the bidisk $\D^2 = \D \times \D$ and their
associated Hilbert spaces are far richer and considerably less well
developed than their one variable counterparts.  For early work on the
topic see for instance Rudin \cite{wR69}, Ahern-Clark \cite{AC69},
Ahern \cite{pA72}, and Sawyer \cite{eS79}.  Rational inner functions
on the bidisk have close ties to the study of stable bivariate
polynomials (e.g. polynomials with no zeros on the bidisk), and
Hilbert space methods have proved useful in understanding them.  See
Cole-Wermer \cite{CW99}, Geronimo-Woerdeman \cite{GW04},
Ball-Sadosky-Vinnikov \cite{BSV05}, Woerdeman \cite{hW10}, Knese
\cite{gK10}, and Geronimo-Iliev-Knese \cite{GIK12}.  Any type of
general classification of inner functions on the bidisk or polydisk
seems unknown and difficult.

Recall that $\phi:\D^2 \to \overline{\D}$ is an inner function if $\phi$ is
holomorphic and satisfies
\[
\lim_{r\nearrow 1} |\phi(re^{i\theta_1},re^{i\theta_2})|=
|\phi(e^{i\theta_1}, e^{i\theta_2})| = 1 \text{ a.e. }
\]
We also use the term inner function for holomorphic functions $\phi:
\D^2 \to \mathcal{B}_1$, where $\mathcal{B}_1$ is the closed unit ball
in the operator norm of the bounded linear operators from a separable
Hilbert space $\mcV$ to itself such that
\[
\phi(z)^* \phi(z) = \phi(z)\phi(z)^*=I \text{ for a.e. } z \in \T^2 =
(\partial \D)^2,
\]
i.e. $\phi$ is unitary valued almost everywhere on the torus. Note
that the radial boundary limits of these operator valued functions
converge in the strong operator topology:
\[
\lim_{r\nearrow 1} \phi(re^{i\theta_1},re^{i\theta_2}) v = \phi(e^{i\theta_1},e^{i\theta_2})v
\]
for each $v \in \mcV$ and for a.e. $(\theta_1,\theta_2) \in
[0,2\pi]^2$.

Let $Z_1, Z_2$ denote the coordinate functions $Z_j(z_1,z_2) = z_j$.
Let us define some standard subspaces of $L^2 =L^2(\T^2) \otimes \mcV$
according to their Fourier series support. Let $\Z_+ = \{0,1,2\dots\},
\Z_{-} = \{-1,-2,-3,\dots\}$.  If $N\subset \Z^2$ and $f \in L^2$, the
statement $\supp(\hat{f}) \subset N$ means $\hat{f}(j_1,j_2) = 0$ for
$(j_1,j_2) \notin N$. We caution that mention of $\mcV$ is suppressed
throughout our definitions in order to keep the notation uncluttered.

\[
\begin{aligned}
H^2 &= \{f \in L^2: \supp(\hat{f}) \subset \Z_+^2\} \\ 
L^2_{+\bullet} &= \{f \in L^2: \supp(\hat{f}) \subset \Z_+ \times \Z\} \\
L^2_{\bullet +} &= \{f \in L^2: \supp(\hat{f}) \subset \Z \times \Z_+\} \\
L^2_{-\bullet} &= \{f \in L^2: \supp(\hat{f}) \subset \Z_{-} \times \Z\} \\
L^2_{\bullet -} &= \{f \in L^2: \supp(\hat{f}) \subset \Z \times \Z_{-}\} \\
L^2_{+-} &= \{f \in L^2: \supp(\hat{f}) \subset \Z_{+} \times \Z_{-}\} \\
L^2_{-+} &= \{f \in L^2: \supp(\hat{f}) \subset \Z_{-} \times \Z_+\} \\
L^2_{--} &= \{f \in L^2: \supp(\hat{f}) \subset \Z_{-} \times \Z_{-}\}.
\end{aligned}
\]

The important vector valued Hilbert spaces associated to $\phi$ are
\[
\begin{aligned}
\Hphi &= H^2 \ominus \phi H^2  = H^2 \cap \phi (L^2_{-+}\oplus L^2_{+-} \oplus L^2_{--})\\
\Hphi^1 &= H^2\cap \phi L^2_{\bullet -}\\
  \Hphi^2 &= H^2\cap \phi L^2_{-\bullet} \\
\Kphi &= H^2\cap \phi L^2_{--} = \Hphi^1
\cap \Hphi^2 \\
\mcK^{1}_\phi &= H^2\cap Z_1\phi L^2_{--} \\
 \mcK^{2}_\phi &= H^2\cap Z_2 \phi L^2_{--}.
\end{aligned}
\]

\begin{example} The basic example $\phi(z) = z_1^2z_2$
  should help make these definitions more concrete.  In this case,
  letting $\vee$ denote closed linear span in $H^2$, we have
\[
\begin{aligned} 
\Hphi &= \vee\{Z_1^j Z_2^k: j,k\geq 0, \text{ and } j \leq 1 \text{ or
} k = 0\} \\ \Hphi^1 &= \vee\{ Z_1^j : j \geq 0\} \\ \Hphi^2 &= \vee\{
Z_1^jZ_2^k: k \geq 0, j=0,1\} \\ \Kphi &= \vee\{ 1,Z_1\} \\ \Kphi^1 &=
\vee\{ 1, Z_1, Z_1^2\} \\ \Kphi^2 &= \vee\{1,Z_1,Z_2,Z_1Z_2\}.
\end{aligned}
\]
\EOEx
\end{example}

The space $\Hphi$ \emph{seems} to be the most natural generalization
of the one variable space $H^2(\T) \ominus \phi H^2(\T)$.  For
instance, in one variable, the reproducing kernel for $H^2 \ominus
\phi H^2$ is given by
\[
\frac{1-\phi(z)\phi(w)^*}{1-z\bar{w}},
\]
while in two variables the reproducing kernel of $\Hphi$ is
\[
\frac{1 - \phi(z)
  \phi(w)^* }{(1-z_1\bar{w}_1)(1-z_2\bar{w}_2)}.
\] 
Unfortunately, this fact is not as illuminating as the one variable
formula. The space $\Hphi$ can be broken down into an orthogonal
direct sum of various spaces above in a non-obvious way. This leads to
a more useful formula for the reproducing kernel, a result of the work
in Ball-Sadosky-Vinnikov \cite{BSV05} (see also \cite{gK11}). 

\begin{notation} \label{efgnotation}
Let $\phi: \D^2 \to \mathcal{B}_1$ be inner.  For $j=1,2$, let
\[
\begin{aligned}
E^j &= \text{ the reproducing kernel  for } \Hphi^j \ominus Z_j \Hphi^j\\
F^j &= \text{ the reproducing kernel for } (\Hphi^j\ominus
\Kphi)\ominus Z_j(\Hphi^j\ominus \Kphi)\\
G &= \text{ the reproducing kernel for } \Kphi.
\end{aligned}
\]
\end{notation}

\begin{theorem} \label{thm:fundagler}
If $\phi: \D^2 \to \mathcal{B}_1$ is inner, then using Notation
\ref{efgnotation}

\begin{equation} \label{agdecomp}
\begin{aligned}
\frac{1-\phi(z)\phi(w)^*}{(1-z_1\bar{w}_1)(1-z_2\bar{w}_2)} &= 
\frac{E^1(z,w)}{1-z_1\bar{w}_1} + \frac{F^2(z,w)}{1-z_2\bar{w}_2}  \\
&= \frac{F^1(z,w)}{1-z_1\bar{w}_1} + \frac{F^2(z,w)}{1-z_2\bar{w}_2} +G(z,w).
\end{aligned}
\end{equation}
\end{theorem}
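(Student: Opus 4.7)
The plan is to deduce both identities of \eqref{agdecomp} from the orthogonal direct sum decompositions
\[
\mcH_\phi = \mcH_\phi^1 \oplus (\mcH_\phi^2 \ominus \mcK_\phi) = (\mcH_\phi^1 \ominus \mcK_\phi) \oplus \mcK_\phi \oplus (\mcH_\phi^2 \ominus \mcK_\phi),
\]
by computing the reproducing kernel of each summand and then summing. The second splitting follows from the first by further refining $\mcH_\phi^1 = (\mcH_\phi^1 \ominus \mcK_\phi) \oplus \mcK_\phi$, so the real work is in (a) identifying the summand kernels and (b) proving the first splitting.

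For (a), I first record shift invariance. The subspaces $L^2_{\bullet +}$ and $L^2_{+\bullet}$ are $Z_1, Z_2$-invariant since their Fourier supports are stable under positive shifts, and since $\phi$ commutes with coordinate multiplications on $L^2$, so are $\phi L^2_{\bullet +}$ and $\phi L^2_{+\bullet}$. Intersecting with $H^2$ produces forward-shift invariant subspaces of $H^2$, whose $H^2$-complements $\mcH_\phi^1 = H^2 \cap \phi L^2_{\bullet-}$ and $\mcH_\phi^2 = H^2 \cap \phi L^2_{-\bullet}$ are therefore invariant under the backward shifts $P_{H^2}(\bar z_j \cdot)$; consequently $\mcK_\phi = \mcH_\phi^1 \cap \mcH_\phi^2$ is backward-shift invariant in both variables. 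Separately, since $L^2_{\bullet -}$ is $Z_1$-invariant and $L^2_{-\bullet}$ is $Z_2$-invariant, $\mcH_\phi^1$ is $Z_1$-invariant and $\mcH_\phi^2$ is $Z_2$-invariant; combined with the backward-shift invariance of $\mcK_\phi$, this forces $\mcH_\phi^1 \ominus \mcK_\phi$ to be $Z_1$-invariant and $\mcH_\phi^2 \ominus \mcK_\phi$ to be $Z_2$-invariant. For any $Z_j$-invariant closed subspace $\mathcal{M} \subseteq H^2$, the Beurling--Wold decomposition $\mathcal{M} = \bigoplus_{k \geq 0} Z_j^k(\mathcal{M} \ominus Z_j\mathcal{M})$ together with the isometric action of $Z_j$ yields $K_\mathcal{M}(z,w) = K_{\mathcal{M} \ominus Z_j\mathcal{M}}(z,w)/(1-z_j\bar w_j)$. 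Applied to $\mcH_\phi^1$, $\mcH_\phi^1 \ominus \mcK_\phi$, and $\mcH_\phi^2 \ominus \mcK_\phi$ this yields the three quotient kernels $E^1/(1-z_1\bar w_1)$, $F^1/(1-z_1\bar w_1)$, $F^2/(1-z_2\bar w_2)$; and $K_{\mcK_\phi} = G$ by definition.

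For (b), the heart is showing $\mcH_\phi^1 \perp (\mcH_\phi^2 \ominus \mcK_\phi)$ together with $\mcH_\phi^1 + (\mcH_\phi^2 \ominus \mcK_\phi) = \mcH_\phi$. For orthogonality, given $f = \phi a \in \mcH_\phi^1$ with $a \in L^2_{\bullet-}$ and $g = \phi b \in \mcH_\phi^2 \ominus \mcK_\phi$ with $b \in L^2_{-\bullet}$, the unitarity of $\phi$ on $\T^2$ gives $\langle f,g\rangle_{L^2} = \langle a,b\rangle_{L^2}$, which collapses to the pairing $\langle a_{--}, b_{--}\rangle$ of the $L^2_{--}$-components because all other cross-terms vanish by disjointness of Fourier quadrants in $\Z^2$. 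The assumption $g \perp \mcK_\phi = H^2 \cap \phi L^2_{--}$, combined with the claim that $P_{\phi L^2_{--}} f$ actually lies in $\mcK_\phi$ for $f \in \mcH_\phi^1$, then forces this pairing to vanish. For spanning, given $f \in \mcH_\phi$, the residual $f - P_{\mcH_\phi^1} f$ is automatically orthogonal to $\mcK_\phi \subseteq \mcH_\phi^1$, so it suffices to show it lies in $\mcH_\phi^2$; this is achieved by decomposing $\phi^* f \in L^2_{-+} \oplus L^2_{+-} \oplus L^2_{--}$ and isolating the $\phi L^2_{-\bullet}$-component of $f$.

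The main obstacle I expect is the claim that $P_{\phi L^2_{--}}(\mcH_\phi^1) \subseteq \mcK_\phi$ (needed for orthogonality), together with the parallel spanning statement $\mcH_\phi \ominus \mcH_\phi^1 \subseteq \mcH_\phi^2$. Neither is forced by Fourier supports alone; both reflect a genuine compatibility between the $\phi$-quadrant decomposition of $L^2$ and the $H^2/(H^2)^\perp$ splitting, and establishing them requires careful tracking of how $P_{H^2}$ interacts with the non-commuting projections $P_{\phi L^2_{--}}$, $P_{\phi L^2_{+-}}$, $P_{\phi L^2_{-+}}$.
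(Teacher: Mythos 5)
Your high-level strategy matches the paper's: compute the summand kernels via Wold decompositions and then use the two orthogonal splittings of $\Hphi$. But the supporting facts you invoke in steps (a) and (b) to make this work are not merely the "main obstacles"; they are actually \emph{false}, so the proposed route cannot be completed.

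The first problem is the claim that $\Hphi^1$ (and hence $\Kphi = \Hphi^1\cap\Hphi^2$) is invariant under \emph{both} backward shifts. The reasoning that $\Hphi^1 = H^2\cap \phi L^2_{\bullet -}$ is the $H^2$-orthocomplement of a forward-shift-invariant subspace is incorrect: while $\Hphi^1$ is orthogonal to $H^2\cap\phi L^2_{\bullet+}$, it is in general a \emph{proper} subspace of $H^2\ominus(H^2\cap\phi L^2_{\bullet+})$, because $P_{H^2}(\phi L^2_{\bullet+})$ need not land back inside $\phi L^2_{\bullet+}$. A concrete counterexample: take $p(z)=2-z_1z_2$ and $\phi = \tilde p/p = (2z_1z_2-1)/(2-z_1z_2)$. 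Since $p$ has no zeros on $\overline{\D^2}$, Corollary \ref{cor:srifdescrip} gives $\Kphi = \vee\{1/p\}$ and $\Hphi^1 = \overline{\operatorname{span}}\{z_1^k/p : k\geq 0\}$. A direct computation with the Taylor series $1/p = \sum_{n\geq 0}2^{-n-1}z_1^nz_2^n$ shows $S_1^*(1/p) = \tfrac12 z_2/p$, whose Fourier support lies on the diagonal $\{(n,n+1)\}$ and is therefore orthogonal to every $z_1^k/p$. Hence $S_1^*(1/p)\notin\Hphi^1$, so $\Hphi^1$ is not $S_1^*$-invariant and $\Kphi$ is not $S_1^*$-invariant, which is exactly what you need to conclude $Z_1$-invariance of $\Hphi^1\ominus\Kphi$ by your argument.

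The second problem is the claim $P_{\phi L^2_{--}}(\Hphi^1)\subseteq\Kphi$. In the same example take $f = z_1/p\in\Kphi^1\subset\Hphi^1$. Then $\phi^* f = z_1/(2z_1z_2-1) = \sum_{n\geq 0}2^{-n-1}z_1^{-n}z_2^{-n-1}$, whose $L^2_{--}$ part is obtained by dropping the $n=0$ term $\tfrac12\bar z_2$. Consequently $P_{\phi L^2_{--}}f = f - \tfrac12\phi\bar z_2 = \tfrac{1}{2z_2 p}$, which has a nonzero $z_2^{-1}$ Fourier coefficient and therefore is not even in $H^2$, let alone $\Kphi$. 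So the orthogonality argument in (b) collapses, and so would the parallel spanning argument.

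The paper sidesteps both pitfalls by a different mechanism: from Proposition \ref{prop:bickel} one gets $\Hphi\ominus\Hphi^1 = \overline{P_{L^2_{+\bullet}}(\phi L^2_{-+})}$; that set is $Z_2$-invariant directly because $\phi\in H^\infty$; the \emph{maximality} of $\Hphi^2$ among $Z_2$-invariant subspaces of $\Hphi$ (Proposition \ref{prop:max}) then forces $\Hphi\ominus\Hphi^1\subseteq\Hphi^2$; and the elementary Lemma \ref{lem:orth} combined with $\Hphi^1\cap\Hphi^2 = \Kphi$ yields both $\Hphi\ominus\Hphi^1 = \Hphi^2\ominus\Kphi$ and the $Z_2$-invariance of $\Hphi^2\ominus\Kphi$. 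No backward-shift invariance of $\Hphi^1$ or $\Kphi$ is used, and indeed cannot be, as the example shows.
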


 We shall outline a proof along the lines of Bickel \cite{kB11}, since
 various parts of the proof will be useful later.  We will see later
 in Propositions \ref{prop:sub1} and \ref{prop:sub2} that the spaces
 in Notation \ref{efgnotation} can be rewritten more simply
\[
\begin{aligned}
&\Hphi^j \ominus Z_j \Hphi^j = \Kphi^j \ominus Z_j \Kphi \\
& (\Hphi^j\ominus \Kphi)\ominus
Z_j(\Hphi^j\ominus \Kphi) = \Kphi^j \ominus \Kphi.
\end{aligned}
\]
Because of this, understanding $\phi$ boils down to understanding
$\Kphi, \Kphi^1\ominus \Kphi,$ and $\Kphi^2\ominus \Kphi$.  \emph{This
  is one of the most important themes of the paper.}

Rewriting the formula above we get
\begin{equation} \label{eqagdecomp}
  1-\phi(z)\phi(w)^* = (1-z_1\bar{w}_1)F^2(z,w) + (1-z_2\bar{w}_2)E^1(z,w),
\end{equation}
which is called an \emph{Agler decomposition} of $\phi$.  By symmetry
we also have
\begin{equation} \label{eqagdecompb}
  1-\phi(z)\phi(w)^* = (1-z_1\bar{w}_1)E^2(z,w) + (1-z_2\bar{w}_2)F^1(z,w).
\end{equation}

 In the scalar case, one can deduce Agler's Pick interpolation theorem
 on the bidisk (see \cite{AM02}) as well as And\^{o}'s inequality from
 operator theory from this formula via an approximation argument (see
 \cite{gK10}).

Because of this connection, any pair of holomorphic positive
semidefinite kernels $(A^1,A^2)$ satisfying
\begin{equation} \label{aglerkernels}  
1-\phi(z)\phi(w)^* = (1-z_1\bar{w}_1)A^2(z,w) +
(1-z_2\bar{w}_2)A^1(z,w) 
\end{equation}
for all $z,w \in \D^2$ are called \emph{Agler kernels} of $\phi$.
Labelling $A^2$ as the kernel next to $(1-z_1\bar{w}_1)$ seems to be
more natural in light of \eqref{agdecomp} and \eqref{eqagdecomp}. The
reproducing kernel Hilbert spaces associated to $A^1,A^2$ are denoted
$\mcH(A^1)$ and $\mcH(A^2).$ Any such pair can be characterized in
terms of the canonical kernels given in Theorem
\ref{thm:fundagler}. This characterization generalizes a similar
result of Ball-Sadosky-Vinnikov in \cite{BSV05}.

To set things up, if we equate the right sides of \eqref{eqagdecomp},
\eqref{eqagdecompb}, and \eqref{aglerkernels} one can derive
 \[
 G^1(z,w):= \frac{A^1(z,w) - F^1(z,w)}{1-z_1\bar{w}_1} 
 =\frac{E^2(z,w) - A^2(z,w)}{1-z_2\bar{w}_2}
 \]
 \[
 G^2(z,w):= \frac{A^2(z,w) - F^2(z,w)}{1-z_2\bar{w}_2} 
 =\frac{E^1(z,w) - A^1(z,w)}{1-z_1\bar{w}_1}.
 \]

\begin{theorem} \label{thm:maxmin}
 Let $\phi:\D^2 \to \mathcal{B}_1$ be inner and let $(A^1,A^2)$ be
 Agler kernels of $\phi$. Then, $G^1$ and $G^2$ as above are both
 positive semidefinite and $G = G^1 + G^2$.
 
 Conversely, suppose $G^1,G^2$ are positive semidefinite kernels satisfying
 $G=G^1+G^2$, and for $j=1,2$
 \[
 A^j(z,w) := F^j(z,w)+(1-z_j\bar{w}_j)G^j(z,w) 
 \]
 is positive semidefinite.  Then, $(A^1,A^2)$ are Agler kernels of $\phi$.
 \end{theorem}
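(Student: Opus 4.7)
My plan is to break the theorem into three stages: the well-definedness of $G^1,G^2$ together with the identity $G=G^1+G^2$; the positive semidefiniteness of $G^1,G^2$, which I expect is the main obstacle; and the converse, which should be a direct calculation.

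For well-definedness, I would compare the assumed Agler kernel equation \eqref{aglerkernels} against the two canonical Agler decompositions. Subtracting \eqref{eqagdecomp} from \eqref{aglerkernels} yields
\[
(1-z_1\bar w_1)(A^2 - F^2) = (1-z_2\bar w_2)(E^1 - A^1),
\]
and subtracting \eqref{eqagdecompb} from \eqref{aglerkernels} yields $(1-z_1\bar w_1)(E^2 - A^2) = (1-z_2\bar w_2)(A^1 - F^1)$. Since $1-z_j\bar w_j$ is nowhere zero on $\D^2\times\D^2$, one may divide freely: this simultaneously shows that the two expressions given for each $G^j$ coincide and defines $G^j$ as a genuine sesqui-holomorphic function. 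To obtain $G^1+G^2=G$, I would divide both the Agler kernel equation and the three-term line of \eqref{agdecomp} by $(1-z_1\bar w_1)(1-z_2\bar w_2)$ and subtract.

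The main obstacle is the positive semidefiniteness of $G^j$. Writing $G^j = [(1-z_j\bar w_j)G^j]\cdot[1/(1-z_j\bar w_j)]$ and invoking the Schur product theorem with the Szeg\H{o} kernel $1/(1-z_j\bar w_j)$ reduces the claim to showing either $A^1-F^1\geq 0$ or $E^2-A^2\geq 0$ in the positive semidefinite order (for $G^1$), and correspondingly $A^2-F^2\geq 0$ or $E^1-A^1\geq 0$ (for $G^2$). Equivalently, one must establish that $F^j$ and $E^j$ are the minimal and maximal Agler kernels. My approach would be to exploit the transfer-function (lurking-isometry) realization of $\phi$ associated to any Agler kernel pair $(A^1,A^2)$: the $A^j$ arise from a unitary colligation whose state space must contain canonical copies of the subspaces $\Kphi^j\ominus \Kphi$, so contractive inclusion of reproducing kernel Hilbert spaces forces $A^j\geq F^j$; dually, a maximal co-isometric extension produces $A^j\leq E^j$. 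This is the promised generalization of the scalar analysis in Ball-Sadosky-Vinnikov \cite{BSV05}.

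For the converse, I would substitute $A^j = F^j + (1-z_j\bar w_j)G^j$ into the right-hand side of \eqref{aglerkernels} and expand to obtain
\[
(1-z_1\bar w_1)F^2 + (1-z_2\bar w_2)F^1 + (1-z_1\bar w_1)(1-z_2\bar w_2)(G^1+G^2),
\]
then use $G^1+G^2=G$ together with the three-term Agler decomposition in \eqref{agdecomp} to identify this expression with $1-\phi(z)\phi(w)^*$. The positive semidefiniteness of the $A^j$ is already built into the hypothesis, so nothing further is required.
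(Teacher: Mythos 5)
Your decomposition into three stages is sensible, and the first and third stages (the algebraic well-definedness of $G^1,G^2$ and the verification of the converse by direct substitution) match the paper's argument exactly; these are fine.

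The gap is in the middle stage. You correctly reduce the problem to proving $A^1 \kleq E^1$ (equivalently $A^1 \kgeq F^1$ or $G^2 \kgeq 0$), but the mechanism you propose does not actually establish this. The assertion that a unitary colligation realizing $\phi$ from the Agler pair $(A^1,A^2)$ ``must contain canonical copies of the subspaces $\Kphi^j\ominus\Kphi$'' is precisely the conclusion you are trying to prove, not a property that follows automatically from the lurking-isometry construction. The colligation built from $(A^1,A^2)$ has state space tied to $\mcH(A^1)\oplus\mcH(A^2)$, and nothing in the construction forces a containment of $\mcH(F^j)$ in $\mcH(A^j)$; indeed, the contractive inclusion $\mcH(F^j)\subseteq\mcH(A^j)$ is, by the standard kernel-dominance equivalence, \emph{identical} to the inequality $A^j\kgeq F^j$ you seek. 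The dual claim that ``a maximal co-isometric extension produces $A^j\kleq E^j$'' suffers from the same circularity and is left entirely unjustified. Moreover, be aware the paper explicitly notes that its result is strictly more general than the Ball--Sadosky--Vinnikov Theorem 5.5 (which handles only Agler kernels coming from orthogonal decompositions of $H^2$), and Example \ref{nonextremeaglerdecomps} exhibits Agler pairs that do \emph{not} arise that way; any colligation-based argument would have to address these general pairs.

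The paper's proof of the key inequality is short and avoids colligations entirely. Set $L = A^1/(1-z_1\bar w_1)$. Since the $\Hphi$-kernel dominates $L$, Lemma \ref{lem:poskernels} gives $\mcH(L)\subseteq\Hphi$. Because $(1-z_1\bar w_1)L = A^1\kgeq 0$, the space $\mcH(L)$ is $Z_1$-invariant, so by the maximality of $\Hphi^1$ among $Z_1$-invariant subspaces of $\Hphi$ (Proposition \ref{prop:max}) every kernel function $L_w v$ lies in $\Hphi^1$. Lemma \ref{lem:RKHS} then yields $E^1/(1-z_1\bar w_1)\kgeq L$, which is exactly $G^2\kgeq 0$. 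The decisive structural fact you are missing is the maximality of $\Hphi^j$ together with the two abstract reproducing-kernel lemmas; once those are in hand, no transfer-function machinery is needed.
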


The theorem says that in a certain sense $E^j$ dominates any possible
$A^j$, while $F^j$ is dominated by any possible $A^j$.

One can also study the relationship between the boundary regularity of
$\phi$ and the boundary regularity of functions in associated Hilbert
spaces.  In one variable, $\phi$ extends to be analytic at a boundary
point if and only if every element of $H^2\ominus \phi H^2$ does
\cite{hH64}.

In \cite{AC69}, Ahern and Clark studied the relationship between
regularity of an inner function $\phi$ on the boundary of the polydisk
and regularity of elements of $\Hphi$.  In particular, if every
element of $\Hphi$ extends holomorphically to a point $z \in \partial
\D^n$ with $|z_k|=1$ for some $k$, then $\phi$ depends on the $k$-th
variable alone.  This suggests $\Hphi$ is too big to be of use in
questions of regularity. The space $\Kphi$ is not quite correct
either, because it can be trivial even for rational inner functions.

For the remaining results, we restrict to finite-dimensional matrix
valued inner functions. Define
\[
\E = \C\setminus \cD,
\]
and then $\E^2$ will be what we call the exterior bidisk.  If $z =
(z_1,z_2) \in \C^2$, we sometimes write $1/\bar{z} =
(1/\bar{z}_1,1/\bar{z}_2)$ for short.

\begin{theorem} \label{thm:regintro} 
Let $\phi:\D^2 \to \mathcal{B}_1$ be a matrix valued inner function
(i.e. finite dimensional matrix valued). Let $X$ be an open subset of
$\T^2$ and let
\[
\begin{aligned}
X_1 &= \{ x_1 \in \T: \exists \ x_2 \in \T \text{ with }  (x_1,x_2) \in X \} \\
X_2 &= \{ x_2 \in \T: \exists \ x_1 \in \T \text{ with }  (x_1,x_2) \in X \}\\
S & = \{1/\bar{z}: \det \phi(z) = 0\}.
\end{aligned}
\]
Then the following are equivalent:   
\begin{itemize}
\item[$(i)$] The function $\phi$ extends continuously to $X.$
\item[$(ii)$] For some pair $(A^1,A^2)$ of Agler kernels of $\phi$,
 the elements of $\mcH(A^1)$, $\mcH(A^2)$ extend continuously to $X.$ 
\item[$(iii)$] There is a domain $\Omega$ containing 
\[ \D^2 \cup X \cup
  (X_1 \times \D) \cup (\D \times X_2) \cup (\E^2\setminus S) 
\] 
  on which $\phi$ and the elements of $\Kphi, \mcK^1_{\phi},
  \mcK^2_{\phi}$ extend to be analytic (and meromorphic on $\Omega
  \cup S$). Point evaluation in $\Omega$ is bounded in these spaces,
  and therefore, the kernels $G, F^1, F^2, E^1, E^2$ and all Agler
  kernels $(A^1,A^2)$ of $\phi$ extend to be sesqui-analytic on
  $\Omega\times \Omega.$
\end{itemize}
\end{theorem}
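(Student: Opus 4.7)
The plan is to prove the cycle $(iii)\Rightarrow (i)\Rightarrow (ii)\Rightarrow (iii)$. The implication $(iii)\Rightarrow (i)$ is immediate since $X\subset \Omega$ and $\phi$ is analytic on $\Omega$.

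For $(i)\Rightarrow (ii)$ I would use the canonical Agler pair $(A^1,A^2)=(E^1,F^2)$ from Theorem \ref{thm:fundagler}. By Notation \ref{efgnotation}, $\mcH(E^1)=\Hphi^1\ominus Z_1\Hphi^1$ and $\mcH(F^2)=(\Hphi^2\ominus\Kphi)\ominus Z_2(\Hphi^2\ominus\Kphi)$; since every element of $\Hphi^j$ is of the form $\phi g$ for a one-sided $L^2$-function $g$, continuity of $\phi$ on $X$ transfers to continuous extension of the RKHS elements, and the reproducing-kernel estimate
\[
|f(z)-f(z')|^2 \leq \|f\|^2\bigl(K(z,z)-2\operatorname{Re}K(z,z')+K(z',z')\bigr)
\]
upgrades pointwise continuity to continuity of the kernels themselves.

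The main work is $(ii)\Rightarrow (iii)$, built in three stages. First, rewriting \eqref{aglerkernels} as $\phi(z)\phi(w)^*=I-(1-z_1\bar w_1)A^2(z,w)-(1-z_2\bar w_2)A^1(z,w)$ and fixing any $w\in\D^2$ with $\phi(w)$ invertible recovers continuous extension of $\phi$ to $X$; via Propositions \ref{prop:sub1}, \ref{prop:sub2} and Theorem \ref{thm:maxmin}, the spaces $\Kphi, \mcK^1_\phi, \mcK^2_\phi$ sit inside sums of $\mcH(A^1)$ and $\mcH(A^2)$ and inherit continuous extension to $X$. Second, for the product pieces, fixing $x_1\in X_1$ the slice $\phi(x_1,\cdot)$ is a one-variable matrix inner function continuous on an arc of $\T$, so the one-variable Ahern--Clark theorem together with the slice structure of $\mcK^j_\phi$ produces holomorphic extension across $X_1\times\D$; symmetric reasoning handles $\D\times X_2$. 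Third, extension to $\E^2\setminus S$ uses the unitary reflection identity $\phi(z)=\bigl(\phi(1/\bar z)^*\bigr)^{-1}$, valid a priori on $\T^2$: once $\phi$ extends analytically across some open piece of the torus, this identity propagates the extension meromorphically to the whole exterior bidisk, with poles precisely on $S$.

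The main obstacle is gluing these three local extensions into a single connected domain $\Omega$ on which $\Kphi, \mcK^1_\phi, \mcK^2_\phi$ all extend analytically with bounded point evaluation. Once that is accomplished, the sesqui-analyticity of $G, F^1, F^2, E^1, E^2$ and of every Agler pair $(A^1,A^2)$ follows from standard RKHS theory: the kernel section $K(\cdot,w)$ is the Riesz representative of point evaluation at $w$, hence belongs to the extended Hilbert space and is analytic in its first argument, with conjugate symmetry yielding anti-analyticity in $w$.
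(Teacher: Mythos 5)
Your cycle $(iii)\Rightarrow(i)\Rightarrow(ii)\Rightarrow(iii)$ is a legitimate logical structure (the paper uses the reverse cycle), and your step $(iii)\Rightarrow(i)$ is trivially correct, as is the ``fix $w$ with $\phi(w)$ invertible'' trick for recovering $(i)$ from $(ii)$. But the two remaining implications have genuine gaps; worse, the central technical tool of the paper's proof --- the edge-of-the-wedge theorem --- is entirely absent from your proposal, and it is precisely what is needed to bridge the gaps you leave.

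In $(i)\Rightarrow(ii)$ you assert that because each $f\in\Hphi^j$ can be written $f=\phi g$ with $g$ a one-sided $L^2$ function, ``continuity of $\phi$ on $X$ transfers to continuous extension of the RKHS elements.'' This does not follow: a product of a continuous function $\phi$ and a merely-$L^2$ function $g$ is only $L^2$, not continuous. Establishing continuity (in fact analyticity) of elements of $\Kphi$, $\Kphi^1$, $\Kphi^2$ up to the boundary is exactly the hard content of the theorem; the paper achieves it by writing $f=\phi\,\overline{Z_1Z_2 g}$ with $g\in H^2$, defining $f$ on $\E^2\setminus S$ via the exterior formula $f(z)=\tfrac{1}{z_1z_2}\phi(z)\overline{g(1/\bar z)}$, verifying two-sided $L^2$ boundary limits, and then invoking the \emph{distributional} edge-of-the-wedge theorem to glue across $X$. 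Your RKHS estimate $|f(z)-f(z')|^2\leq\|f\|^2\bigl(K(z,z)-2\operatorname{Re}K(z,z')+K(z',z')\bigr)$ would indeed upgrade kernel continuity to element continuity, but you have no argument for continuity of the kernels in the first place.

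In $(ii)\Rightarrow(iii)$ you claim ``via Propositions \ref{prop:sub1}, \ref{prop:sub2} and Theorem \ref{thm:maxmin}, the spaces $\Kphi, \mcK^1_\phi, \mcK^2_\phi$ sit inside sums of $\mcH(A^1)$ and $\mcH(A^2)$.'' The containment actually runs the opposite way: Corollary \ref{cor:Agker} shows $\mcH(A^j)$ sits contractively inside $\Kphi^j$, because the kernel of $\Kphi^j$ is $F^j+G$ and $F^j+G-A^j = G^k + z_j\bar w_j G^j\kgeq 0$. So continuity of $\mcH(A^j)$-elements does not propagate to $\Kphi^j$ by a containment argument. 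Furthermore, even if you had boundary continuity in hand, passing to \emph{analytic} extension on a genuine two-dimensional domain $\Omega$, and proving bounded point evaluation there, requires the quantitative local control built into the edge-of-the-wedge construction and Rudin's Theorem 4.9.1 (Proposition \ref{prop:rudin2} in the paper); the uniform boundedness argument the paper runs leans on the fact that extended values are controlled by values on a fixed compact set inside the wedge. Your slice-wise Ahern--Clark suggestion for the $X_1\times\D$ piece is a different idea, but it only gives one-variable control for each fixed $x_1$ and does not by itself yield an open set in $\C^2$ on which the extensions are jointly holomorphic.
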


Continuity up to the boundary is a strong requirement.  For weaker
notions of extension to the boundary and their relations to Agler
decompositions, see \cite{AMY11} and \cite{AMY11b} by Agler,
McCarthy, and Young.

Restriction maps of canonical spaces are not only bounded, they are
also isometries into one variable spaces.

\begin{theorem} \label{thm:restrict} 
Let $\phi:\D^2 \to \mathcal{B}_1$ be a matrix valued inner function.
For almost every $t \in \T$, the map
\[
f \mapsto f(t,\cdot)
\]
embeds $\Kphi^1 \ominus Z_1 \Kphi$ and $\Kphi^1 \ominus \Kphi$
isometrically into $H^2(\T) \ominus \phi(t,\cdot) H^2(\T)$.
\end{theorem}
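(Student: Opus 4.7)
The plan is to identify the reproducing kernels of the two source spaces, restricted to the slice $\{t\}\times\D$ in each argument, with the reproducing kernel of the target model space $\mathcal{M}_t := H^2(\T)\ominus\phi(t,\cdot)H^2(\T)$, and then to combine the coisometric nature of RKHS restriction with a Fubini identity. Throughout, ``a.e.\ $t$'' tacitly avoids the null set where the slice $f(t,\cdot)$ fails to be in $L^2(\T)$, where $\phi(t,\cdot)$ fails to be inner, or where the boundary kernel identity below breaks down.

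First I would verify that the slice map lands in $\mathcal{M}_t$ for a.e.\ $t$. Both subspaces lie in $\Kphi^1\subset\Hphi^1=H^2\cap\phi L^2_{\bullet-}$; for $f\in\Hphi^1$ set $g := \phi^* f \in L^2_{\bullet-}$. A direct Fourier-support inspection shows that for a.e.\ $t\in\T$ the slice $g(t,\cdot)$ has Fourier support in $\Z_-$ as a function of $z_2$, so $g(t,\cdot)\in L^2_-(\T)$. Since $\phi(t,\cdot)$ is unitary a.e., the identity $\phi(t,\cdot)^* f(t,\cdot)=g(t,\cdot)\in L^2_-(\T)$ is equivalent to $f(t,\cdot)\perp\phi(t,\cdot)H^2(\T)$, and combined with $f(t,\cdot)\in H^2(\T)$ this places $f(t,\cdot)$ in $\mathcal{M}_t$.

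Next I would match the reproducing kernels on the slice. By Propositions \ref{prop:sub1}, \ref{prop:sub2} and Notation \ref{efgnotation}, $E^1$ is the reproducing kernel of $\Kphi^1\ominus Z_1\Kphi$ and $F^1$ is that of $\Kphi^1\ominus\Kphi$. Substituting $z_1=w_1=t\in\T$ in \eqref{eqagdecomp} and \eqref{eqagdecompb} annihilates the $F^2$ and $E^2$ terms through the $(1-z_1\bar w_1)$ factor, yielding
\[
\frac{1-\phi(t,z_2)\phi(t,w_2)^*}{1-z_2\bar w_2}
= E^1((t,z_2),(t,w_2))
= F^1((t,z_2),(t,w_2)),
\]
and the left side is precisely the reproducing kernel of $\mathcal{M}_t$. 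For a.e.\ $t\in\T$ this boundary identification is justified by taking radial limits $z_1=w_1=rt$ as $r\nearrow1$, using the matrix-valued hypothesis and the boundary regularity of the Agler kernels (cf.\ Theorem \ref{thm:regintro}).

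The proof then concludes via a standard RKHS argument combined with Fubini. Write $\mcH$ for either source space with reproducing kernel $K\in\{E^1,F^1\}$. Since $K$ restricted to $(\{t\}\times\D)\times(\{t\}\times\D)$ coincides with the reproducing kernel of $\mathcal{M}_t$, the slice map $R_t f := f(t,\cdot)\colon \mcH\to\mathcal{M}_t$ is a coisometry of reproducing kernel Hilbert spaces, and in particular a contraction: $\|R_t f\|_{\mathcal{M}_t}^2 \le \|f\|_\mcH^2$ for a.e.\ $t$. By Fubini,
\[
\int_\T \|R_t f\|_{\mathcal{M}_t}^2\,dm(t)
= \int_\T \|f(t,\cdot)\|_{L^2(\T)}^2\,dm(t)
= \|f\|_{L^2(\T^2)}^2
= \|f\|_\mcH^2.
\]
An a.e.\ pointwise upper bound whose integral equals the bound must be an equality a.e., so $\|R_t f\|_{\mathcal{M}_t}=\|f\|_\mcH$ for a.e.\ $t$, which is the desired isometric embedding. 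The main technical point is the a.e.\ boundary identification of the Agler kernels at $z_1 = w_1 = t$ and the corresponding interpretation of the RKHS restriction across the distinguished boundary, where the matrix-valued (finite-dimensional) hypothesis on $\phi$ is essential.
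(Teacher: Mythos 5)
Your approach diverges from the paper's and contains a genuine gap. The coisometry claim — that $R_t\colon\mcH\to\mathcal{M}_t$ is a coisometry because the reproducing kernel of $\mcH$ restricted to $(\{t\}\times\D)\times(\{t\}\times\D)$ coincides with that of $\mathcal{M}_t$ — requires that the points of $\{t\}\times\D$ be bounded point evaluations for $\mcH$ and that the kernel extend sesqui-analytically to that boundary slice. You invoke Theorem \ref{thm:regintro} for this, but that theorem gives such extensions only on a domain $\Omega\supset X_1\times\D$ when $\phi$ extends \emph{continuously} to an open set $X\subset\T^2$. Theorem \ref{thm:restrict} makes no regularity hypothesis on $\phi$; a general matrix inner function need not extend continuously to any boundary arc, and the canonical kernels need not be sesqui-analytic across $\T\times\D$. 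So the key step of your argument is only available under the hypotheses of Proposition \ref{prop:restrict}, not under the hypotheses of Theorem \ref{thm:restrict}.

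Once the coisometry (hence contraction) inequality $\|R_tf\|_{\mathcal{M}_t}\le\|f\|_\mcH$ is lost, your Fubini computation no longer closes: it only records that $\int_\T\|f(t,\cdot)\|_{L^2(\T)}^2\,dm(t)=\|f\|_\mcH^2$, which holds automatically and says nothing about a.e.\ pointwise equality without the a priori upper bound. The paper avoids the issue entirely with a Fourier argument: since $\Kphi^1\ominus\Kphi = (\Hphi^1\ominus\Kphi)\ominus Z_1(\Hphi^1\ominus\Kphi)$ and $\Kphi^1\ominus Z_1\Kphi = \Hphi^1\ominus Z_1\Hphi^1$ are orthogonal to all their nonzero $Z_1$-translates, the one-variable function $t\mapsto\int_\T\langle f(t,z_2),g(t,z_2)\rangle_\mcV\,d\sigma(z_2)$ has all Fourier coefficients zero except the zeroth, hence is a.e.\ equal to the constant $\langle f,g\rangle$. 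This gives the isometry directly, with no boundary regularity and no kernel evaluation at $\T$. Your first step (that $f(t,\cdot)\in\mathcal{M}_t$ for a.e.\ $t$, via $\phi^*f\in L^2_{\bullet-}$) matches the paper's second half and is fine; it is the kernel-on-the-boundary mechanism for the contraction that needs to be replaced by the $Z_1$-wandering-subspace argument.
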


Finally, we can give an explicit description of the spaces involved
for a rational inner function $\phi = Q/p$, where $Q$ is an $N\times
N$ matrix polynomial and $p \in \C[z_1,z_2]$ has no zeros in $\D^2$.
If $Q$ has degree $d=(d_1,d_2)$, define
\[
\begin{aligned}
\Pzp &= \{ q/p \in H^2: \deg q \leq (d_1-1,d_2-1) \} \\
\Pop &= \{ q/p \in H^2: \deg q \leq (d_1,d_2-1) \} \\
\Ptp &= \{ q/p \in H^2: \deg q \leq (d_1-1,d_2) \}. 
\end{aligned}
\]
The requirement that $q/p \in L^2$ is analytic in the sense that the
zeros of $q$ must counter those of $p$.  Also, define $\tilde{Q}(z) =
z^d Q(1/\bar{z})^*$.  It turns out $\tilde{\phi} = \tilde{Q}/p$ is a
rational inner function, and we can give the following
analytic/algebraic description of the spaces associated to $\phi$.

\begin{theorem} \label{thm:rifdescrip}
 If $\phi = Q/p$ is a rational matrix inner function, then
\[
\begin{aligned}
\Kphi &= \{f \in \Pzp: \tilde{\phi} f \in \Pzp \} \\ \Kphi^1 &=
\{f \in \Pop: \tilde{\phi} f \in \Pop \} \\ \Kphi^2 &= \{f \in
\Ptp: \tilde{\phi} f \in \Ptp \}. \\
\end{aligned}
\]
\end{theorem}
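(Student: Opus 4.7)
The plan is to prove both inclusions in each of the three descriptions, with essentially one calculation adapted to each space. The algebraic input is that $\phi$ inner forces $QQ^* = Q^*Q = |p|^2 I$ on $\T^2$, which upgrades to the polynomial identity $Q\tilde{Q} = \tilde{Q}Q = p\tilde{p}\, I$. Equivalently, $\tilde{Q}\phi = \tilde{p}I$, and on $\T^2$ one has $\phi^* = \tilde{Q}/\tilde{p}$ (using $Q^* = \bar{z}^d\tilde{Q}$ and $\bar{p} = \bar{z}^d\tilde{p}$ there).

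For the forward inclusion, take $f\in\Kphi$, so $f=\phi h$ for some $h\in L^2_{--}$. Then $pf = Qh$; since $Q$ has Fourier support in $\{0,\dots,d_1\}\times\{0,\dots,d_2\}$ and $h$ in $\Z_-^2$, the product has support in $\{j_1\leq d_1-1,\, j_2\leq d_2-1\}$. Combined with $pf\in H^2$, this forces $pf$ to be a vector polynomial of bidegree at most $(d_1-1,d_2-1)$, so $f\in\Pzp$. The companion identity $p\tilde{\phi}f = \tilde{Q}f = \tilde{Q}\phi h = \tilde{p}h$ yields the same bound on $p\tilde{\phi}f$, so $\tilde{\phi}f\in\Pzp$. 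For $\Kphi^1$ and $\Kphi^2$, write $f = Z_j\phi h$; the extra $Z_j$ loosens the degree bound in the $j$th coordinate by one, producing $\Pop$ or $\Ptp$.

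For the reverse inclusion, given $f = q/p$ and $\tilde{\phi}f = r/p$ in $\Pzp$ (so $\tilde{Q}q = pr$ as vector polynomials), set $h := \phi^*f$ on $\T^2$; this lies in $L^2$ because $\phi^*$ is unitary a.e. Using $\phi^* = \tilde{Q}/\tilde{p}$ and $\tilde{Q}q = pr$ gives $h = r/\tilde{p}$. To conclude $h\in L^2_{--}$---and thereby $f = \phi h \in \Kphi$---let $r_\sim(z) := z^{(d_1-1,d_2-1)}\overline{r(1/\bar{z})}$ be the reflection of $r$ at bidegree $(d_1-1,d_2-1)$. Then on $\T^2$, $\overline{r(z)} = \bar{z}^{(d_1-1,d_2-1)}r_\sim(z)$ and $\overline{\tilde{p}(z)} = \bar{z}^d p(z)$, which yield $\bar{h} = z_1 z_2\, r_\sim/p$. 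The heart of the argument is that $r_\sim/p\in H^2$: since $|r_\sim|=|r|$ pointwise on $\T^2$ and $r/p\in H^2\subset L^2$, we have $r_\sim/p\in L^2(\T^2)$; and since $p$ has no zeros in $\D^2$, $r_\sim/p$ is holomorphic there, hence in $H^2$. Thus $\bar{h}\in z_1 z_2 H^2$ and $h\in L^2_{--}$. For $\Kphi^1$ and $\Kphi^2$, one uses $g := \bar{Z}_j\phi^*f$ instead; the $\bar{Z}_j$ cancels the shift from the looser degree bound, so the identity $\bar{g} = z_1 z_2\, r_\sim/p$ recurs with $r_\sim$ the reflection of $r$ at the appropriate bidegree.

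The principal obstacle is this last $H^2$-reflection step. Its subtlety lies in possible boundary zeros of $p$ on $\T^2$, which could a priori obstruct $r_\sim/p \in L^2$. This is resolved by the twin observations that reflection at the correct bidegree preserves $|\cdot|$ on $\T^2$, and that $p$ being nonvanishing in the open bidisk makes holomorphy of $r_\sim/p$ in $\D^2$ automatic, so both the $L^2$ bound and the $H^2$ property are inherited from $r/p \in H^2$ for free.
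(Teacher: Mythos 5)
Your overall strategy matches the paper's: in both directions you reduce to a degree/Fourier-support computation, and the pivot of the reverse inclusion is to show that a reflected rational function lies in $H^2$. The forward inclusion is correct (your Fourier-support convolution argument is an equivalent rephrasing of the paper's ``$f\in H^2$ and $Z^{d'}\bar{f}\in H^2$ forces $\deg f\le d'$'' reflection trick), and your identification of $h=\phi^*f$ with $r/\tilde{p}$ and of $\bar{h}$ with $z_1z_2\,r_\sim/p$ is exactly what the paper does with $\tilde r = Z^{d'}\bar r$.

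The step you describe as coming ``for free''---that $r_\sim/p$ holomorphic on $\D^2$ together with $r_\sim/p\in L^2(\T^2)$ yields $r_\sim/p\in H^2$---is a genuine gap. That implication is not automatic: already in one variable, $\exp\!\left(\frac{1+z}{1-z}\right)$ is holomorphic on $\D$ with unimodular (hence $L^2$) boundary values but lies in no $H^p$. Holomorphy in the interior plus $L^2$ boundary data does not by itself control the $L^2$ means on interior tori. What \emph{is} true, and what the paper isolates as Proposition~\ref{prop:fp}, is that if $p$ has no zeros in $\D^2$, $f\in H^2$, and $f/p\in L^2$, then $f/p\in H^2$. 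Its proof needs real work: slice by Fubini, use Lemma~\ref{lem:outer} to conclude $p(\cdot,z_2)$ is an \emph{outer} polynomial for all but finitely many $z_2\in\T$, hence $1/p(\cdot,z_2)$ lies in the Smirnov class $N^+(\T)$, and then invoke that $N^+$ is an algebra with $N^+\cap L^2(\T)=H^2(\T)$. You need to either prove this or cite it; once you do, your argument closes and is essentially the same as the paper's.
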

 
Notice in particular that these spaces are all finite dimensional,
something shown already in Ball-Sadosky-Vinnikov \cite{BSV05} when
$\phi$ is regular up to $\T^2$. The significance here is that we have
given a complete description of the spaces involved even when there
are singularities on $\T^2$ (already done in the scalar case in
\cite{gK10}), and in addition, some of the dimensions involved can be
determined in a straightforward way.  Write $\det \phi =
\frac{\tilde{g}}{g},$ where $g$ is a polynomial with no zeros on
$\D^2$ and no factors in common with $\tilde{g}$.

\begin{theorem} \label{thm:rifdim}
With the same setup as the previous theorem,
\[
\begin{aligned}
\dim \Kphi^1 \ominus Z_1 \Kphi &= \dim \Kphi^1 \ominus \Kphi = \deg_2 \tilde{g} \\
\dim \Kphi^2 \ominus Z_2 \Kphi &= \dim \Kphi^2 \ominus \Kphi = \deg_1 \tilde{g}. 
\end{aligned}
\]
\end{theorem}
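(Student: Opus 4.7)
The plan is to combine the restriction map of Theorem~\ref{thm:restrict} with the Agler decomposition~\eqref{eqagdecomp}, identifying reproducing kernels on a generic torus slice. By Theorem~\ref{thm:rifdescrip}, $\Kphi$ and $\Kphi^1$ sit inside the finite dimensional polynomial spaces $\Pzp$ and $\Pop$ respectively, so both are finite dimensional; moreover $Z_1\Kphi \subseteq \Kphi^1$, since multiplication by $Z_1$ preserves both the degree bound $(d_1,d_2-1)$ and the divisibility condition built into the description of $\Kphi^1$. Because $Z_1$ is isometric on $H^2$, $\dim Z_1\Kphi = \dim\Kphi$, and subtracting from $\dim\Kphi^1$ yields
\[
\dim(\Kphi^1 \ominus Z_1\Kphi) = \dim(\Kphi^1 \ominus \Kphi).
\]
So it suffices to compute $\dim(\Kphi^1 \ominus Z_1\Kphi)$, the dimension of the reproducing kernel Hilbert space with kernel $E^1$.

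For the upper bound, Theorem~\ref{thm:restrict} provides, for a.e.\ $t \in \T$, an isometric embedding
\[
R_t: \Kphi^1 \ominus Z_1\Kphi \hookrightarrow H^2(\T,\mcV) \ominus \phi(t,\cdot)H^2(\T,\mcV),
\]
whose target is a one-variable matrix model space of dimension $\deg\det\phi(t,\cdot)$. Since $\det\phi = \tilde g/g$ in lowest terms, with $g,\tilde g$ coprime of equal bidegree in $\C[z_1,z_2]$, the slices $g(t,\cdot)$ and $\tilde g(t,\cdot)$ remain coprime and of degree $\deg_2 \tilde g$ for every $t$ outside the finite zero set of their $z_2$-resultant. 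Hence $\deg\det\phi(t,\cdot) = \deg_2\tilde g$ for a.e.\ $t$, and therefore
\[
\dim(\Kphi^1 \ominus Z_1\Kphi) \le \deg_2 \tilde g.
\]

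For the reverse inequality I identify the reproducing kernel of $R_t(\Kphi^1 \ominus Z_1\Kphi)$ with that of the ambient model space. Setting $z_1 = w_1 = t \in \T$ in~\eqref{eqagdecomp} annihilates the $(1-z_1\bar w_1)F^2$ term and leaves
\[
1 - \phi(t,z_2)\phi(t,w_2)^* = (1 - z_2\bar w_2)\, E^1\bigl((t,z_2),(t,w_2)\bigr),
\]
so $E^1$ on the slice $z_1 = w_1 = t$ coincides with the reproducing kernel of $H^2(\T,\mcV) \ominus \phi(t,\cdot)H^2(\T,\mcV)$. Since $R_t(\Kphi^1 \ominus Z_1\Kphi)$ is a closed subspace of that model space carrying the same reproducing kernel, it must equal the entire model space, forcing $\dim(\Kphi^1 \ominus Z_1\Kphi) = \deg_2 \tilde g$. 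The symmetric argument using~\eqref{eqagdecompb} and restricting in $z_2$ gives $\dim(\Kphi^2 \ominus Z_2\Kphi) = \deg_1\tilde g$.

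The main obstacle is justifying the slicing step when $\phi$ has singularities on $\T^2$: one must check that for generic $t \in \T$ the rational expressions $\phi(t,\cdot)$, $E^1((t,\cdot),(t,\cdot))$, and $F^2((t,\cdot),(t,\cdot))$ are well defined and that~\eqref{eqagdecomp} specializes to the claimed one-variable identity. This is a rational function matter: the exceptional set of $t$ lies in the zero set of finitely many nonzero polynomials (the relevant denominators together with the $z_2$-resultant) and is therefore a proper finite subset of $\T$.
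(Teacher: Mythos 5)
Your proof follows essentially the same route as the paper's: restrict to a generic slice $\{t\}\times\T$, identify the image with the one-variable model space $H^2(\T)\ominus\phi(t,\cdot)H^2(\T)$ via the slice kernel identity, and invoke the Smith--McMillan degree formula $\dim H^2(\T)\ominus\phi(t,\cdot)H^2(\T)=\deg\det\phi(t,\cdot)$. Your lower-bound step (slicing the Agler decomposition to see that the restricted $E^1$ coincides with the full model-space kernel) is precisely the content of the paper's Proposition~\ref{prop:restrict}, applied there to $F^1$; since $E^1-F^1=(1-z_1\bar w_1)G$ vanishes on $z_1=w_1=t\in\T$, the two formulations agree.

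Two small corrections. First, $g$ and $\tilde g$ need not have equal bidegree --- $\deg_j g$ can be strictly less than $\deg_j\tilde g$ (think of $g=1$, $\tilde g=z_1z_2$) --- so you cannot assert that; what you actually need, and what the paper proves by expanding $g$ in powers of $z_2$ and observing $g_0(t)\neq 0$, is that $\deg_2\tilde g(t,\cdot)=\deg_2\tilde g$ for all but finitely many $t$. Second, your closing remark that the slicing "is a rational function matter" is the step where the paper invests the most effort: one must know that the kernel functions $E^1_{(t,w_2)}v$ (with $t\in\T$) actually lie in $\Kphi^1\ominus Z_1\Kphi$ and that the reproducing property persists there, which is what Theorem~\ref{thm:regintro} and Proposition~\ref{prop:restrict} establish. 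In the rational case this does reduce to the observation that everything in sight is a vector of rational functions with denominator $p$ and that for generic $t$ the polynomial $p(t,\cdot)$ has no zeros on $\overline\D$ (Lemmas~\ref{finitezeros} and~\ref{lem:outer}), so your instinct is right, but it should be stated rather than waved at.
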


The dimension of $\Kphi$ depends on the nature of the zeros of $p$ on
$\T^2$ and properties of $Q$ and therefore cannot be given by such a
simple formula. See Example \ref{ex:reguniquedecomp}.

It is then possible to show that the canonical Agler kernels
$(E^1,F^2)$ or $(F^1,E^2)$ have minimal dimensions.

\begin{corollary} \label{cor:minag}
Let $\phi$ be an $N\times N$ matrix valued rational inner function on
$\D^2$. Write $\det \phi =\tilde{g}/g$ in lowest terms and let $d=
(d_1,d_2)=\deg \tilde{g}$. If $(A^1,A^2)$ are Agler kernels, then 
\[
\dim \mcH(A^1) \geq d_2 \text{ and } \dim \mcH(A^2) \geq d_1.
\]
\end{corollary}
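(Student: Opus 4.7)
The plan is to combine Theorem~\ref{thm:maxmin} with Theorem~\ref{thm:rifdim} via the standard fact that a pointwise inequality of positive semidefinite kernels gives a contractive inclusion of the associated reproducing kernel Hilbert spaces.

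First, I would unpack the conclusion of Theorem~\ref{thm:maxmin}. Writing out the defining identity
\[
A^j(z,w) = F^j(z,w) + (1-z_j\bar{w}_j) G^j(z,w)
\]
with $G^j$ positive semidefinite, we see that $A^j - F^j$ is a positive semidefinite kernel on $\D^2$. By the classical Aronszajn inclusion theorem for reproducing kernel Hilbert spaces, this implies $\mcH(F^j) \subseteq \mcH(A^j)$ with contractive inclusion. In particular,
\[
\dim \mcH(A^j) \geq \dim \mcH(F^j).
\]

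Next, I would identify $\mcH(F^j)$. By Notation~\ref{efgnotation}, $F^j$ is the reproducing kernel of $(\Hphi^j \ominus \Kphi) \ominus Z_j(\Hphi^j\ominus \Kphi)$, and the identity noted after Theorem~\ref{thm:fundagler} (to be proved in Propositions~\ref{prop:sub1}--\ref{prop:sub2}) allows us to rewrite this as $\Kphi^j \ominus \Kphi$. Therefore $\mcH(F^j) = \Kphi^j \ominus \Kphi$ as Hilbert spaces, and Theorem~\ref{thm:rifdim} gives
\[
\dim \mcH(F^1) = \dim(\Kphi^1\ominus\Kphi) = \deg_2 \tilde{g} = d_2,
\qquad
\dim \mcH(F^2) = \dim(\Kphi^2\ominus\Kphi) = \deg_1 \tilde{g} = d_1.
\]
Combining this with the contractive inclusion above yields $\dim \mcH(A^1) \geq d_2$ and $\dim \mcH(A^2) \geq d_1$, as required.

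There is really no serious obstacle in the argument, since both ingredients are already in hand: Theorem~\ref{thm:maxmin} provides the crucial positivity statement that $F^j$ is the minimal Agler kernel, and Theorem~\ref{thm:rifdim} provides the exact dimension count for $\mcH(F^j)$. The only small point to be careful about is the bookkeeping of indices: the Agler kernel $A^j$ sits next to $(1-z_j\bar{w}_j)$ in the decomposition, which is paired via Theorem~\ref{thm:fundagler} with the $j$-indexed space $\Kphi^j\ominus\Kphi$, and the latter has dimension $\deg_{3-j}\tilde{g}$ by Theorem~\ref{thm:rifdim}. Once these are matched correctly, the corollary drops out immediately.
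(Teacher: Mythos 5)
The key step in your argument is false: you claim that $A^j - F^j = (1-z_j\bar{w}_j)G^j(z,w)$ is a positive semidefinite kernel and then invoke Aronszajn inclusion to conclude $\mcH(F^j) \subseteq \mcH(A^j)$. But $1-z_j\bar{w}_j$ is not a positive semidefinite kernel (e.g. the $2\times 2$ Gram matrix at the points $z_j=0$ and $z_j=r$ has negative determinant $-r^2$), so the Schur-product argument that would justify positivity of $(1-z_j\bar{w}_j)G^j$ is unavailable, and indeed the product is not positive semidefinite in general. A concrete counterexample is already in the paper: for $\phi(z)=z_1^2z_2$ one has $F^1 = z_1^2\bar{w}_1^2$, and the Agler kernel $A^1 = 1$ (case (1) of Example~\ref{nonextremeaglerdecomps}) gives $A^1 - F^1 = 1 - z_1^2\bar{w}_1^2$, which is not positive semidefinite, and here $\mcH(F^1) = \vee\{Z_1^2\} \not\subseteq \vee\{1\} = \mcH(A^1)$. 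So the containment $\mcH(F^j)\subseteq\mcH(A^j)$ simply does not hold, even though the dimension inequality you want is true. (Note that the containment that does come for free, via Corollary~\ref{cor:Agker}, goes the other way: $\mcH(A^j)$ sits contractively inside $\Kphi^j = \mcH(F^j + G)$.)

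The paper's actual argument gets around this by restricting to a boundary slice, which is exactly where the troublesome factor disappears. Using Proposition~\ref{prop:restrict}, one picks $t\in\T$ so that $f\mapsto f(t,\cdot)$ is a bijection from $\Kphi^1\ominus\Kphi$ onto $H^2(\T)\ominus\phi(t,\cdot)H^2(\T)$. Since $|t|=1$, setting $z_1=w_1=t$ kills the $(1-z_1\bar{w}_1)$ term, so $A^1((t,z_2),(t,w_2)) = F^1((t,z_2),(t,w_2))$ on the slice. The bijectivity of the restriction map then shows that the kernels $F^1_{(t,\eta)}v$ already span a subspace of dimension $\dim(\Kphi^1\ominus\Kphi)$, which forces $\dim\mcH(A^1)\geq \dim(\Kphi^1\ominus\Kphi) = \deg_2\tilde{g}$ by Theorem~\ref{thm:rifdim}. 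Your identification of $\mcH(F^j)$ with $\Kphi^j\ominus\Kphi$ and the appeal to Theorem~\ref{thm:rifdim} are correct; what is missing is a valid mechanism for transferring the dimension bound from $F^j$ to $A^j$, and that mechanism is the boundary slice, not an inclusion of Hilbert spaces.
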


\begin{remark}
We point out some known applications of the above work in the scalar
case of $\phi = \tilde{p}/p$.

The existence of Agler kernels with minimal dimensions was important
in proving one of the main results in Agler-McCarthy-Young
\cite{AMY12}.

The details of the canonical spaces and kernels have been important in
understanding function theory on \emph{distinguished varieties} in
\cite{gK10dv}, \cite{JKM12}.  Distinguished varieties are algebraic
curves in $\C^2$ which exit the bidisk through the distinguished
boundary \cite{AM05}.

Since the spaces $\Kphi, \Kphi^1,\Kphi^2$ consist of rational
functions with denominator $p$, orthogonality relations in these
spaces can be reinterpreted as orthogonality relations between spaces
of polynomials using the inner product of $L^2(\frac{1}{|p|^2}
d\sigma, \T^2).$  In particular, the fact that
\[
\Kphi^1\ominus \Kphi \perp \Kphi^2
\]
can be reinterpreted as the key condition required to characterize
measures of the form $\frac{1}{|p|^2} d\sigma$ on $\T^2$ in
Gernonimo-Woerdeman \cite{GW04}.  This condition can further be used
to give a characterization of positive two variable trigonometric
polynomials $t$ which can written as $|p(z,w)|^2$, where $p$ is a
polynomial with no zeros on $\overline{\D^2}$.
\end{remark}

The following result of Kummert \cite{aK89} on transfer function
representations is a direct consequence of Theorem \ref{thm:rifdim}
(see also Ball-Sadosky-Vinnikov \cite{BSV05}).

\begin{corollary} \label{cor:transfer}
Let $\phi$ be an $N\times N$ matrix valued rational inner function on
$\D^2$. Write $\det \phi = \tilde{g}/g$ in lowest terms and let
$d=(d_1,d_2) = \deg \tilde{g}$, $|d|=d_1+d_2$.  Then, there exists an
$(N+|d|)\times (N+|d|)$ unitary matrix $U,$ which we write in block
form
\[
U = \begin{matrix} \ & \begin{matrix} \C^N & \C^{|d|} \end{matrix}
  \\
\begin{matrix} \C^N \\ \C^{|d|} \end{matrix} & \begin{pmatrix} A &
  B \\ C & D \end{pmatrix} \end{matrix}
=  \begin{matrix} \ & \begin{matrix} \C^N & \C^{d_1} & \C^{d_2} \end{matrix}
  \\
\begin{matrix} \C^N \\ \C^{d_1}\\ \C^{d_2} \end{matrix} & \begin{pmatrix} A &
  B_1 & B_2 \\ C_1 & D_{11} & D_{12} \\ C_2 & D_{21} & D_{22} \end{pmatrix} \end{matrix}
\]
such that
\begin{equation} \label{transferform}
\phi(z) = A + B d(z)(I-D d(z))^{-1} C,
\end{equation}
where $d(z) = \begin{pmatrix} z_1 I_{d_1} & 0 \\ 0 & z_2
  I_{d_2} \end{pmatrix}$.

Furthermore, $(N+|d|)\times (N+|d|)$ is the minimum possible size of
such a representation.
\end{corollary}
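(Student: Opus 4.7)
The plan is to apply the classical lurking-isometry argument to the canonical Agler decomposition, using Theorem~\ref{thm:rifdim} to pin down the dimension of the state space and Corollary~\ref{cor:minag} for minimality.

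For existence, begin with the canonical Agler identity \eqref{eqagdecomp}. By Theorem~\ref{thm:rifdim} together with the reformulations $\Hphi^j \ominus Z_j\Hphi^j = \Kphi^j \ominus Z_j\Kphi$ and $(\Hphi^j \ominus \Kphi)\ominus Z_j(\Hphi^j \ominus \Kphi) = \Kphi^j \ominus \Kphi$ noted in the excerpt, one has $\dim \mcH(F^2) = d_1$ and $\dim \mcH(E^1) = d_2$. Apply Kolmogorov factorizations to write $F^2(z,w) = u_1(z)u_1(w)^*$ and $E^1(z,w) = u_2(z)u_2(w)^*$ with $u_j(z)$ an $N\times d_j$ matrix, collect $u(z) = (u_1(z)\ u_2(z))$ and $d(z) = \mathrm{diag}(z_1 I_{d_1}, z_2 I_{d_2})$, and rewrite \eqref{eqagdecomp} as the Gramian identity
\[
(I,\ u(z)d(z))\,(I,\ u(w)d(w))^* \;=\; (\phi(z),\ u(z))\,(\phi(w),\ u(w))^*,
\]
the key rewriting being $z_j\bar w_j u_j(z)u_j(w)^* = (z_j u_j(z))(w_j u_j(w))^*$. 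This equality of Gramians sets up the standard lurking isometry between two subspaces of $\C^{N+|d|}$, which extends (by matching up orthogonal complements of equal dimension) to a unitary on all of $\C^{N+|d|}$. Taking its adjoint $U$, written in blocks $A, B, C, D$ as in the statement, the identity becomes $(I,\ u(w)d(w))\,U = (\phi(w),\ u(w))$. Expanding gives the block equations $\phi(w) = A + u(w)d(w)C$ and $u(w) = B + u(w)d(w)D$; solving the second for $u(w) = B(I - d(w)D)^{-1}$, substituting, and applying $(I-XY)^{-1}X = X(I - YX)^{-1}$ with $X = d(w), Y = D$ delivers the Kummert form $\phi(w) = A + Bd(w)(I - Dd(w))^{-1}C$.

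For minimality, run the computation in reverse. Given any size-$(N + n_1 + n_2)$ unitary $U$ producing $\phi$ via the transfer formula with $d(z) = \mathrm{diag}(z_1 I_{n_1}, z_2 I_{n_2})$, set $u(z) := B(I - d(z)D)^{-1}$; a short check shows $(I, u(w)d(w))\,U = (\phi(w), u(w))$, and applying $UU^* = I$ yields the Gramian identity
\[
I - \phi(z)\phi(w)^* = u(z)\bigl(I - d(z)d(w)^*\bigr)u(w)^* = (1-z_1\bar w_1)u_1(z)u_1(w)^* + (1-z_2\bar w_2)u_2(z)u_2(w)^*,
\]
with $u(z) = (u_1(z)\ u_2(z))$ split according to the block decomposition of $d(z)$. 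Thus $(u_2 u_2^*,\, u_1 u_1^*)$ is an Agler kernel pair of $\phi$ with $\dim \mcH(A^1) \leq n_2$ and $\dim \mcH(A^2) \leq n_1$. Corollary~\ref{cor:minag} then forces $n_2 \geq d_2$ and $n_1 \geq d_1$, so $n_1 + n_2 \geq |d|$, with equality realized by the existence construction.

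The main obstacle is bookkeeping: organizing the Agler decomposition as a legitimate Gramian product with the right adjoint pattern, orienting the lurking isometry so that $d(w)$ appears to the \emph{left} of $(I - Dd(w))^{-1}$ in the output formula (which is why one must take the adjoint of the extending unitary and then apply the $(I-XY)^{-1}X = X(I-YX)^{-1}$ identity), and, in the minimality step, correctly matching $A^1$ to $d_2$ and $A^2$ to $d_1$ under the convention of \eqref{aglerkernels}. Each individual step is short, but the index-tracking is where the argument could most easily slip.
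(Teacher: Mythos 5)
Your proposal is correct and follows essentially the same route as the paper: write down a canonical Agler decomposition from Theorem~\ref{thm:fundagler}, take Kolmogorov factorizations whose sizes are pinned to $(d_1,d_2)$ by Theorem~\ref{thm:rifdim} (via $\Kphi^j\ominus Z_j\Kphi$, $\Kphi^j\ominus\Kphi$), run a lurking-isometry argument, and invoke Corollary~\ref{cor:minag} for minimality. The only differences are cosmetic: you use the pair $(F^2,E^1)$ from \eqref{eqagdecomp} where the paper uses $(E^2,F^1)$ from \eqref{eqagdecompb} (both are canonical with the same dimensions $(d_1,d_2)$); your row-block Gramian $(I,\ u(z)d(z))\,(I,\ u(w)d(w))^*$ together with the adjoint choice and the identity $(I-XY)^{-1}X=X(I-YX)^{-1}$ delivers the formula for $\phi$ directly, whereas the paper's column-vector lurking isometry first produces $\phi^t=A+Bd(z)(I-Dd(z))^{-1}C$ and then remarks that one transposes or reruns the argument; and you spell out the "reverse the argument" minimality step (extracting $u(z)=B(I-d(z)D)^{-1}$ and reading off Agler kernels $(u_2u_2^*,u_1u_1^*)$ with $\dim\mcH(A^1)\le n_2$, $\dim\mcH(A^2)\le n_1$) where the paper simply asserts it. The index bookkeeping you flag as the main hazard is in fact handled correctly throughout.
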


It is a standard calculation that given a unitary $U$,
\eqref{transferform} yields a matrix rational inner function, so the
transfer function representation gives a way to write down every
rational inner function, although the representation may not be
unique.

We have emphasized Agler decompositions with minimal dimensions
because of the following application of Theorem \ref{thm:rifdim} to
the study of \emph{three} variable rational inner functions.  We offer
an improvement to a result of Knese \cite{gKsacrifott}, which in turn
was a generalization of a result of Kummert \cite{aK89b}.

\begin{theorem} \label{thm:threevar} Let $p \in \C[z_1,z_2,z_3]$ have degree $(n,1,1)$ and no
  zeros on $\overline{\D^3}$, let $\tilde{p}(z) =
  z_1^nz_2z_3\overline{p(1/\bar{z})}$, and define the rational inner
  function $\phi = \tilde{p}/p$.Then, $\phi$ has an Agler decomposition of the form
\[
1 - \phi(z)\overline{\phi(w)} = \sum_{j=1}^{3} (1-z_j\bar{w}_j) SOS_j(z,w),
\]
where $SOS_2$ and $SOS_3$ are sums of two squares, while $SOS_1$ is a
sum of $2n$ squares.
\end{theorem}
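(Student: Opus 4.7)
The plan is to combine a transfer function realization, tailored to the bidegree $(n,1,1)$ structure of $p$, with the minimal-dimension counts provided by Theorem~\ref{thm:rifdim}. Three-variable Agler decompositions do not exist in general, so the argument must exploit the fact that $p$ has degree one in $z_2$ and in $z_3$.

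I would start from a three-variable transfer function representation of $\phi$ as established by Knese~\cite{gKsacrifott}: a unitary matrix $U=\left(\begin{smallmatrix} A & B \\ C & D \end{smallmatrix}\right)$ with block structure $(1;n_1,n_2,n_3)$ satisfying
\[
\phi(z) = A + B\, d(z)\, (I - D\, d(z))^{-1} C, \qquad d(z) = \mathrm{diag}(z_1 I_{n_1}, z_2 I_{n_2}, z_3 I_{n_3}).
\]
The standard computation from $UU^* = I = U^*U$, exactly as in the two-variable proof of Corollary~\ref{cor:transfer}, then yields
\[
1 - \phi(z)\overline{\phi(w)} = \sum_{j=1}^{3} (1 - z_j \bar w_j)\, SOS_j(z,w),
\]
with $SOS_j$ automatically a sum of $n_j$ squares of polynomials. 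The theorem thus reduces to producing such a $U$ with $(n_1,n_2,n_3) = (2n,2,2)$.

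To obtain these specific block sizes I would build $U$ from canonical three-variable Hilbert spaces modeled on the $\Kphi,\Kphi^j$ of Notation~\ref{efgnotation}, and compute their dimensions by two-variable slicing combined with Theorem~\ref{thm:rifdim}. Viewing $\phi$ slice-by-slice as a two-variable rational inner function in $(z_2,z_3)$ with $z_1$ fixed, each slice has bidegree $(1,1)$, so Theorem~\ref{thm:rifdim} delivers a one-dimensional canonical orthogonal complement in each of the $z_2$ and $z_3$ directions; threading the two slots of the pair $(z_2,z_3)$ through the construction produces two-dimensional fibers, giving $n_2 = n_3 = 2$. Viewing $\phi$ instead as a two-variable rational inner function in $(z_1,z_2)$, or in $(z_1,z_3)$, one has $\deg\tilde{p}=(n,1)$, so Theorem~\ref{thm:rifdim} returns a canonical space of dimension $n$ in the $z_1$-direction. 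Adding the independent contributions from the $j=2$ and $j=3$ slicings yields $n_1 = 2n$, a doubling that is a direct consequence of the $(1,1)$ bidegree in the two variables orthogonal to $z_1$.

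The main obstacle is making this slicing/dimension count fully rigorous. One must verify that the appropriate three-variable canonical spaces decompose as tensor-like products of their two-variable counterparts so that the dimensions multiply in the expected way, and that the orthonormal bases associated with the canonical shifts assemble into a genuinely unitary block matrix $U$ rather than merely a contractive one. The two-variable proof of Corollary~\ref{cor:transfer} supplies the template for this final assembly step, but transporting it to three variables requires care because the global Agler structure on $\D^3$ is only guaranteed by the special bidegree hypothesis and not by general principles.
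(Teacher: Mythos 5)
The proposal has a genuine gap, and it lies exactly in the step you flag as "the main obstacle." You propose to \emph{begin} from a three-variable transfer function realization of $\phi$ with block sizes $(2n,2,2)$ and observe (correctly) that the unitarity calculation then automatically produces the desired sum-of-squares decomposition. But the existence of such a representation --- indeed, the existence of \emph{any} three-variable Agler/transfer realization of $\phi$ --- is precisely what must be established, since rational inner functions on $\D^3$ need not admit one. Citing \cite{gKsacrifott} does not close this circle: that paper also obtains the three-variable representation only by an explicit construction (and with the worse bounds $4n,2(n+1),2$), not by a general principle, so the improved block sizes cannot be read off from it.

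The slicing argument you offer to justify $(n_1,n_2,n_3)=(2n,2,2)$ does not fill the gap either. Fixing $z_1\in\T$ and regarding $\phi$ as a rational inner function in $(z_2,z_3)$ of bidegree $(1,1)$, Theorem~\ref{thm:rifdim} yields one-dimensional canonical spaces in each of the $z_2$ and $z_3$ directions for that slice, not two; the step "threading the two slots $\dots$ produces two-dimensional fibers" is an assertion rather than an argument, and likewise for adding "independent contributions" to get $n_1 = 2n$. There is no mechanism here guaranteeing that the slice-wise one-variable model spaces assemble into a genuine three-variable Hilbert space decomposition, which is the precise sense in which Agler decompositions fail in three variables. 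The paper's proof sidesteps this entirely by reducing to two variables: it writes $p = a + bz_3$, constructs a $3\times 3$ matrix-valued \emph{two-variable} inner function $V(z_1,z_2)$ satisfying the intertwining relation
\[
V(z_1,z_2)\begin{bmatrix} p(z) \\ z_3 E(z_1,z_2)\end{bmatrix}
=\begin{bmatrix} \tilde{p}(z) \\ E(z_1,z_2)\end{bmatrix},
\]
computes $\det V = \tilde{a}(\tilde{a}+b)/\bigl(a(a+\tilde{b})\bigr)$ of degree at most $(2n,2)$, and \emph{then} applies the two-variable Corollary~\ref{cor:transfer} to $V$, which is where Theorem~\ref{thm:rifdim} enters legitimately. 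Multiplying the resulting transfer relation by $(p, z_3E)^t$ and using the intertwining property produces the three-variable decomposition with the counts $2n,2,2$. The $z_3$-square count of $2$ comes from the factorization $|a|^2 - |b|^2 = \|E\|^2$ (a two-square factorization), not from any dimension count in a $z_3$-direction canonical space. Your proposal is missing both the $V$ construction and the intertwining identity, which together are the content of the theorem.
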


To be clear, $SOS_2(z,z) = \frac{1}{|p(z)|^2}(|A_1(z)|^2+|A_2(z)|^2)$
for some polynomials $A_1,A_2\in \C[z_1,z_2,z_3]$, and similarly for
$SOS_3$, while $SOS_1(z,z) = \frac{1}{|p(z)|^2}
\sum_{j=1}^{2n}|B_j(z)|^2$ for some polynomials $B_1,\dots, B_{2n} \in
\C[z_1,z_2,z_3]$.  Part of the significance of the result is that no
such decomposition can generally exist for rational inner functions on
$\D^3$ (regardless of the bounds on the number of squares involved).
For more on this, see \cite{gKsacrifott}.

On the other hand, part of the significance of the result \emph{is}
the bounds obtained on the number of squares involved.  The original
theorem in \cite{gKsacrifott} had the non-optimal bounds of
$4n$\footnote{Regrettably, due to an arithmetic error $4(n-1)$ was
  written in the original paper \cite{gKsacrifott} instead of $4n$.
}, $2(n+1), 2$ for the number of squares in $SOS_1,SOS_2,SOS_3$.

In \cite{gKrifitsac}, an explicit Agler decomposition was found for
$\tilde{p}/p$ when $p(z_1,z_2,z_3) = 3- z_1-z_2-z_3$ with sums of
squares terms $SOS_1,$ $SOS_2,$ $SOS_3$ containing 3 squares each.  Although
$p$ has a zero on $\T^3$, the proof for Theorem \ref{thm:threevar}
works for $p$ (and in fact should work for any $p$ with no zeros on
$\D^3$ when $p$ and $\tilde{p}$ have no factors in common).  We
conclude that $\tilde{p}/p$ has an Agler decomposition with $2$
squares in each sums of squares term. It is shown in \cite{gKrifitsac}
that none of the terms $SOS_1,SOS_2, SOS_3$ can be written as a single
square. This shows Theorem \ref{thm:threevar} is optimal when $n=1$.  

\begin{question} \label{q:optimal} 
Is Theorem \ref{thm:threevar} optimal for $n>1$?  Namely, is there a
rational inner function of degree $(n,1,1)$ ($n>1$) such that for
every Agler decomposition, $SOS_1$ is a sum of $2n$ or more squares?
\end{question}

The rest of the paper is summarized in the table of contents.

\tableofcontents

\section{Theorem \ref{thm:fundagler} on fundamental Agler decompositions}

In this section, we sketch the proof of Theorem \ref{thm:fundagler}.

We first note some simple inclusions 
\[
\Kphi, Z_j \Kphi \subset \Kphi^j \subset \Hphi^j \subset \Hphi.
\]
The spaces $\Kphi, \Kphi^j$ should be thought of as ``small'' since
they are finite dimensional in the case of rational $\phi$.  

The following can be proved straight from definitions.

\begin{prop} \label{prop:max}
For $j=1,2$, the space $\Hphi^j$ is invariant under multiplication by
$Z_j,$ and even more, if $f \in \Hphi$ and if $Z_j^k f \in \Hphi$ for
all $k>0$, then $f \in \Hphi^j$.
\end{prop}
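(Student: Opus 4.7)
The plan is to work with the Fourier-support characterization of $\Hphi^j$ and $\Hphi$ directly, exploiting that $\phi$ is unitary valued on $\T^2$ so that multiplying by $\phi^*$ is an $L^2$-isometry that preserves Fourier support information.

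\textbf{Invariance.} Take $j=1$ (the other case is symmetric). If $f \in \Hphi^1 = H^2 \cap \phi L^2_{\bullet -}$, write $f = \phi g$ with $g \in L^2_{\bullet -}$. Then $Z_1 f \in H^2$ since $H^2$ is invariant under $Z_1$, and $Z_1 f = \phi(Z_1 g)$; but the space $L^2_{\bullet -}$ is defined by a condition on the \emph{second} Fourier index, so it is invariant under multiplication by $Z_1$. Hence $Z_1 g \in L^2_{\bullet -}$ and $Z_1 f \in \Hphi^1$.

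\textbf{Characterization.} Suppose $f \in \Hphi$ and $Z_1^k f \in \Hphi$ for every $k>0$. Using the identity
\[
\Hphi = H^2 \cap \phi\bigl(L^2_{-+} \oplus L^2_{+-} \oplus L^2_{--}\bigr)
\]
stated in the paper, write $f = \phi h$ with $h \in L^2$ whose Fourier coefficients satisfy $\hat{h}(m,n) = 0$ whenever $(m,n) \in \Z_+^2$. For each $k>0$, $Z_1^k f = \phi (Z_1^k h)$ lies in $\Hphi$ as well, so $Z_1^k h$ has the same Fourier-support restriction, i.e. $\widehat{Z_1^k h}(m,n)=\hat h(m-k,n) = 0$ for $(m,n) \in \Z_+^2$. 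Fixing any $n \geq 0$ and any integer $\ell$, we may choose $k$ so large that $m := \ell + k \geq 0$, and conclude $\hat h(\ell, n) = 0$. Thus $\hat h$ vanishes on $\Z \times \Z_+$, which is exactly the statement that $h \in L^2_{\bullet -}$. Hence $f = \phi h \in H^2 \cap \phi L^2_{\bullet -} = \Hphi^1$.

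\textbf{Main obstacle.} There is no real obstacle; the whole statement is a bookkeeping check using Fourier supports on $\T^2$. The only point where one must pay attention is invoking that the given decomposition $\Hphi = H^2 \cap \phi(L^2_{-+} \oplus L^2_{+-} \oplus L^2_{--})$ (and the corresponding description of $\Hphi^1$) really does characterize $\Hphi$ via the Fourier support of $\phi^* f$, which is legitimate because $\phi$ is unitary almost everywhere on $\T^2$.
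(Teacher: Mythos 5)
Your proof is correct, and since the paper supplies no explicit argument for Proposition \ref{prop:max} (it only remarks that the statement "can be proved straight from definitions"), your Fourier-support bookkeeping is exactly the intended route: use that $\phi$ is a.e.\ unitary so $\phi^*$ preserves $L^2$, translate membership in $\Hphi$ and $\Hphi^j$ into support conditions on $\phi^* f$, and observe that shifting by $Z_1^k$ and letting $k \to \infty$ forces vanishing on all of $\Z \times \Z_+$. No gaps.
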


Hence, $\Hphi^j$ is maximal among all $Z_j$ invariant subspaces of
$\Hphi$.  As $\phi$ is unitary valued a.e. on $\T^2$, it follows that $\phi L^2_{\bullet -} = (\phi
L^2_{\bullet +})^{\perp}$.  An observation in \cite{kB11} is
that
\[
\Hphi^1 = H^2 \cap (\phi L^2_{\bullet +})^{\perp} = H^2\cap (\phi
H^2)^{\perp} \cap (\phi L^2_{-+})^{\perp} = \Hphi \cap (\phi
L^2_{-+})^{\perp},
\]
so that we have the following: 

\begin{prop} \label{prop:bickel}
\[
\Hphi^1 = \Hphi \ominus P_{\Hphi}(\phi L^2_{-+})
\]
and so
\[
\Hphi\ominus \Hphi^1 = \overline{P_{\Hphi}(\phi L^2_{-+})}
= \overline{P_{H^2}(\phi L^2_{-+})} =
\overline{P_{L^2_{+\bullet}} (\phi L^2_{-+})}.
\]
\end{prop}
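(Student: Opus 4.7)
The plan is to apply a standard Hilbert-space lemma to the identity $\Hphi^1 = \Hphi \cap (\phi L^2_{-+})^\perp$ already derived above, and then to pass successively from $P_{\Hphi}$ to $P_{H^2}$ to $P_{L^2_{+\bullet}}$ by checking that each enlargement of the target projection does not change the image (up to closure).

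The lemma I want states that if $M$ is a closed subspace of a Hilbert space $H$ and $S \subset H$ is any subset, then $M \cap S^\perp = M \ominus \overline{P_M(S)}$. This is immediate: for $f \in M$, self-adjointness of $P_M$ together with $P_M f = f$ gives $\langle f,s\rangle = \langle f, P_M s\rangle$ for every $s \in S$, so $f \perp S$ in $H$ iff $f \perp P_M(S)$ in $M$. Applying this with $M = \Hphi$ and $S = \phi L^2_{-+}$ yields the first displayed equation (the $\ominus$ absorbing the closure), and $\Hphi \ominus \Hphi^1 = \overline{P_{\Hphi}(\phi L^2_{-+})}$ is then tautological.

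For the equality $\overline{P_{\Hphi}(\phi L^2_{-+})} = \overline{P_{H^2}(\phi L^2_{-+})}$, I use the orthogonal decomposition $H^2 = \Hphi \oplus \phi H^2$, which gives $P_{H^2} = P_{\Hphi} + P_{\phi H^2}$, and then I argue that $P_{\phi H^2}$ annihilates $\phi L^2_{-+}$. The latter follows from $\phi L^2_{-+} \perp \phi H^2$: since $\phi(z)^* \phi(z) = I$ a.e.\ on $\T^2$, multiplication by $\phi$ is an $L^2$-isometry, hence sends the orthogonal pair $L^2_{-+} \perp H^2$ (which holds because their Fourier supports $\Z_- \times \Z_+$ and $\Z_+^2$ are disjoint) to an orthogonal pair. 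For $\overline{P_{H^2}(\phi L^2_{-+})} = \overline{P_{L^2_{+\bullet}}(\phi L^2_{-+})}$, I note that $\phi$ is holomorphic on $\D^2$, so its operator Fourier coefficients vanish outside $\Z_+^2$; convolving these with $\hat f$ for $f \in L^2_{-+}$ keeps the second coordinate $\geq 0$, i.e.\ $\phi L^2_{-+} \subset L^2_{\bullet +}$. Since $P_{L^2_{+\bullet}}$ and $P_{L^2_{\bullet +}}$ are commuting Fourier multipliers whose product is $P_{H^2}$, on $L^2_{\bullet +}$ the operator $P_{H^2}$ reduces to $P_{L^2_{+\bullet}}$, giving the final equality.

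There is no real obstacle here; the only point requiring mild care is that the orthogonality and Fourier-support arithmetic goes through verbatim in the operator-valued setting, using that $\phi$ acts pointwise on $\mcV$-valued $L^2$ and that the $L^2$-pairing is invariant under pointwise multiplication by unitary operator-valued functions.
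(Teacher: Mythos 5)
Your proof is correct and follows essentially the same route as the paper: you use the identity $\Hphi^1 = \Hphi \cap (\phi L^2_{-+})^{\perp}$ established just before the proposition, convert it to the $\ominus$ form via the standard $M\cap S^\perp = M\ominus \overline{P_M(S)}$ fact, and then pass $P_{\Hphi}\to P_{H^2}\to P_{L^2_{+\bullet}}$ using $\phi H^2 \perp \phi L^2_{-+}$ (via the $L^2$-isometry of multiplication by $\phi$) and $\phi L^2_{-+}\subset L^2_{\bullet +}$, which is exactly the paper's chain of reasoning, just with the supporting details spelled out more explicitly.
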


Here $P$ denotes orthogonal projection onto the space in the subscript
and the overset bars denote closures. The second equality follows from
the fact that $\phi H^2 \perp \phi L^2_{-+}$. The last equality
follows from the fact that $\phi L^2_{-+} \subset L^2_{\bullet +}$ and
therefore projecting onto either $H^2$ or $L^2_{+\bullet}$ has the
same effect.

It is easy to show that $P_{L^2_{+\bullet}}(\phi L^2_{-+})$ is
invariant under multiplication by $Z_2$ since $\phi$ is in
$H^\infty$. So, $\Hphi\ominus \Hphi^1$ is invariant under
multiplication by $Z_2$, and therefore $\Hphi \ominus \Hphi^1 \subset
\Hphi^2$ by Proposition \ref{prop:max}.  By Lemma \ref{lem:orth} below
and the fact $\Hphi^1\cap \Hphi^2 = \Kphi$, we immediately have the
following two propositions.

\begin{prop} \label{prop:orth}
\[
\Hphi \ominus \Hphi^1 = \Hphi^2 \ominus \Kphi
\]
so that
\[
\Hphi = (\Hphi^1\ominus \Kphi) \oplus (\Hphi^2 \ominus \Kphi)
\oplus \Kphi.
\]
\end{prop}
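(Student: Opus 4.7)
The plan is to take the inclusion $\Hphi\ominus\Hphi^1 \subset \Hphi^2$---which the text has just established using Propositions \ref{prop:bickel} and \ref{prop:max}---and upgrade it to the equality $\Hphi\ominus\Hphi^1 = \Hphi^2 \ominus \Kphi$ by a short Hilbert space argument. The three-piece orthogonal decomposition in the second display then follows formally from $\Hphi = \Hphi^1 \oplus (\Hphi\ominus\Hphi^1)$ together with $\Hphi^1 = \Kphi \oplus (\Hphi^1 \ominus \Kphi)$.

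First I would record the easy containment $\Hphi\ominus\Hphi^1 \subset \Hphi^2 \ominus \Kphi$: any element of $\Hphi\ominus\Hphi^1$ already sits inside $\Hphi^2$ by the inclusion above, and it is orthogonal to $\Hphi^1$, hence to the subspace $\Kphi \subset \Hphi^1$.

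For the reverse containment, I would take $f \in \Hphi^2 \ominus \Kphi$ and split it in $\Hphi$ as $f = g + h$ with $g \in \Hphi^1$ and $h \in \Hphi\ominus\Hphi^1$. Since $h$ lies in $\Hphi^2$ and $f$ does too, the difference $g = f - h$ belongs to $\Hphi^1 \cap \Hphi^2 = \Kphi$. On the other hand $f \perp \Kphi$ by hypothesis and $h \perp \Hphi^1 \supset \Kphi$, so $g = f - h \perp \Kphi$ as well. Thus $g = 0$ and $f = h \in \Hphi\ominus\Hphi^1$, giving the equality.

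The only genuinely nontrivial input is the inclusion $\Hphi\ominus\Hphi^1 \subset \Hphi^2$, which the text has already handled by combining the identification in Proposition \ref{prop:bickel} with the $Z_2$-invariance of $P_{L^2_{+\bullet}}(\phi L^2_{-+})$ and the maximality statement in Proposition \ref{prop:max}. Everything that remains is formal Hilbert space bookkeeping, whose only external ingredient is the identity $\Hphi^1 \cap \Hphi^2 = \Kphi$; presumably Lemma \ref{lem:orth} packages exactly this sort of bookkeeping into a reusable statement, so in the actual write-up the second half of the argument would reduce to one invocation of that lemma.
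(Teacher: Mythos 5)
Your proposal is correct and matches the paper's approach: the paper establishes $\Hphi\ominus\Hphi^1\subset\Hphi^2$ exactly as you describe and then invokes Lemma~\ref{lem:orth} (with $\mcH=\Hphi$, $\mcK_1=\Hphi^1$, $\mcK_2=\Hphi^2$) together with $\Hphi^1\cap\Hphi^2=\Kphi$ to get the equality. Your hand-written decomposition $f=g+h$ is precisely the content of that lemma unwound, as you yourself anticipate in the final sentence.
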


\begin{prop} \label{thm:shift}
For $j=1,2$, $\Hphi^j \ominus \Kphi$ is invariant under multiplication
by $Z_j$.
\end{prop}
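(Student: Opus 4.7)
My plan is to reduce the claim to facts already established in the preceding paragraphs. First, by Proposition \ref{prop:orth}, $\Hphi \ominus \Hphi^1 = \Hphi^2 \ominus \Kphi$, and by the same result applied with the two coordinates interchanged, $\Hphi \ominus \Hphi^2 = \Hphi^1 \ominus \Kphi$. Hence it suffices to prove that $\Hphi \ominus \Hphi^{3-j}$ is $Z_j$-invariant for each $j$.

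For $j=2$, this is the observation the text flags just before Proposition \ref{prop:orth}: by Proposition \ref{prop:bickel},
\[
\Hphi \ominus \Hphi^1 = \overline{P_{L^2_{+\bullet}}(\phi L^2_{-+})},
\]
so $Z_2$-invariance follows from two elementary Fourier-support checks. First, $L^2_{+\bullet}$ is invariant under both $M_{Z_2}$ and $M_{\bar Z_2}$, since its support $\Z_+ \times \Z$ is stable under translation of the second index in either direction; consequently $M_{Z_2}$ commutes with the orthogonal projection $P_{L^2_{+\bullet}}$. Second, $L^2_{-+}$ is $Z_2$-invariant (as $\Z_+ + 1 \subset \Z_+$), and $M_{Z_2}$ commutes with $M_\phi$, so $\phi L^2_{-+}$ is $Z_2$-invariant. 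Combining,
\[
Z_2 \, P_{L^2_{+\bullet}}(\phi L^2_{-+}) = P_{L^2_{+\bullet}}(\phi \, Z_2 L^2_{-+}) \subset P_{L^2_{+\bullet}}(\phi L^2_{-+}),
\]
and this invariance passes to the closure since $M_{Z_2}$ is bounded on $L^2$. The case $j=1$ is identical after interchanging the two coordinates.

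There is no substantive obstacle here; the proposition is really a repackaging of Proposition \ref{prop:orth} together with the $Z_2$-invariance of $\overline{P_{L^2_{+\bullet}}(\phi L^2_{-+})}$ that the preceding text has already observed. The only thing to be careful about is applying Proposition \ref{prop:orth} in its symmetric form, so that one is matching up $\Hphi^j \ominus \Kphi$ with $\Hphi \ominus \Hphi^{3-j}$ (not $\Hphi \ominus \Hphi^j$), which is what makes the resulting $Z_j$-invariance the correct direction.
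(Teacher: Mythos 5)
Your argument is correct and follows the same route the paper takes: Proposition \ref{prop:orth} identifies $\Hphi^{j}\ominus\Kphi$ with $\Hphi\ominus\Hphi^{3-j}=\overline{P_{L^2_{+\bullet}}(\phi L^2_{-+})}$ (and its symmetric counterpart), and the $Z_j$-invariance of that projection range is exactly the fact the paper cites as "easy to show'' just before stating Propositions \ref{prop:orth} and \ref{thm:shift}. You have simply spelled out that elementary verification (reducing subspace so the projection commutes with $M_{Z_2}$, $Z_2$-invariance of $L^2_{-+}$, commutation of multiplication operators, passage to the closure), which is precisely what the paper leaves implicit.
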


The following is a standard fact.

\begin{prop} \label{prop:rk} The space $\mcH_\phi$ is a reproducing kernel Hilbert space with
  reproducing kernel
\[
\frac{1 - \phi(z)
  \phi(w)^* }{(1-z_1\bar{w}_1)(1-z_2\bar{w}_2)}
\]
for $z,w \in \D^2$.
\end{prop}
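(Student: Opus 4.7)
The plan is to obtain the kernel as $(I - M_\phi M_\phi^*)$ applied to the Szeg\H{o} kernel of the ambient space $H^2$. First, I would record that the vector-valued Hardy space $H^2=H^2(\D^2)\otimes\mcV$ is a reproducing kernel Hilbert space with kernel
\[
k(z,w) = \frac{1}{(1-z_1\bar w_1)(1-z_2\bar w_2)} I_{\mcV},
\]
meaning that for $w\in\D^2$, $v\in\mcV$, and $f\in H^2$ one has $\ip{f(w)}{v}_{\mcV} = \ip{f}{k(\cdot,w)v}_{H^2}$. This is the standard fact obtained by tensoring the one-variable Szeg\H{o} kernel against itself and against the identity on $\mcV$, and it is easily checked on the Fourier monomial basis.

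Next I would verify that $M_\phi:H^2\to H^2$ is an isometry. Since $\phi$ is inner, $\phi(e^{i\theta})$ is unitary for a.e.\ $\theta\in\T^2$, so
\[
\|\phi f\|_{H^2}^2 = \int_{\T^2} \ip{\phi(\theta)^*\phi(\theta) f(\theta)}{f(\theta)}_{\mcV} = \|f\|_{H^2}^2.
\]
Consequently, the orthogonal projection onto $\phi H^2$ is $M_\phi M_\phi^*$, and the projection onto $\Hphi = H^2\ominus \phi H^2$ is $I - M_\phi M_\phi^*$.

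The remaining step is to compute $M_\phi^*(k(\cdot,w)v)$. For any $g\in H^2$,
\[
\ip{M_\phi^*(k(\cdot,w)v)}{g}_{H^2} = \ip{k(\cdot,w)v}{\phi g}_{H^2} = \ip{v}{\phi(w)g(w)}_{\mcV} = \ip{k(\cdot,w)\phi(w)^*v}{g}_{H^2},
\]
so $M_\phi^*(k(\cdot,w)v) = k(\cdot,w)\phi(w)^*v$. Applying $I-M_\phi M_\phi^*$ gives
\[
P_{\Hphi}(k(\cdot,w)v) = \frac{1-\phi(z)\phi(w)^*}{(1-z_1\bar w_1)(1-z_2\bar w_2)} v,
\]
and reproducing of $f\in\Hphi$ at $w$ against $v$ is inherited from reproducing in $H^2$ since $\Hphi$ is a closed subspace. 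This identifies the displayed expression as the reproducing kernel of $\Hphi$. The only delicate point is the isometry of $M_\phi$, which depends squarely on $\phi$ being unitary-valued a.e.\ on $\T^2$; once that is in hand, the rest is a direct computation.
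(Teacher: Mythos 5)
Your proof is correct, and since the paper simply cites this as ``a standard fact'' without supplying an argument, yours is exactly the canonical one: use that $M_\phi$ is an isometry (because $\phi$ is unitary-valued a.e.\ on $\T^2$), so $P_{\Hphi}=I-M_\phi M_\phi^*$, compute $M_\phi^* k(\cdot,w)v = k(\cdot,w)\phi(w)^*v$ via the reproducing property, and apply the projection to the Szeg\H{o} kernel. Nothing is missing; all steps check out.
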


\begin{proof}[Sketch of Proof of Theorem \ref{thm:fundagler}]
Multiplication by $Z_j$ is a pure isometry on $H^2$ and hence on
$\Hphi^j$ and $\Hphi^j\ominus \Kphi$. By Propositions \ref{prop:max}
and \ref{thm:shift}, we have the orthogonal decompositions
\[
\Hphi^j = \bigoplus_{k\geq 0} Z_j^k (\Hphi^j \ominus Z_j \Hphi^j)
\text{ and } \Hphi^j\ominus \Kphi = \bigoplus_{k\geq 0} Z_j^k ((\Hphi^j\ominus
\Kphi)\ominus Z_j(\Hphi^j\ominus \Kphi)).
\]
Because of this, the reproducing kernels for $\Hphi^j$ and
$\Hphi^j\ominus K_{\phi}$ are given by
\[
\frac{E^j(z,w)}{1-z_j\bar{w}_j} \text{ and } \frac{F^j(z,w)}{1-z_j\bar{w}_j}
\]
respectively (recall Notation \ref{efgnotation}).  By Proposition
\ref{prop:orth},
\[
\Hphi = \Hphi^1 \oplus (\Hphi^2\ominus \Kphi) 
=
(\Hphi^1\ominus \Kphi) \oplus (\Hphi^2\ominus \Kphi) \oplus
\Kphi.
\]

Therefore, the reproducing kernel for $\Hphi$ can be
decomposed in the following two ways: 
\[
\begin{aligned}
\frac{1 - \phi(z)
  \phi(w)^*}{(1-z_1\bar{w}_1)(1-z_2\bar{w}_2)} &=
\frac{E^1(z,w)}{1-z_1\bar{w}_1} + \frac{F^2(z,w)}{1-z_2\bar{w}_2} \\
&= \frac{F^1(z,w)}{1-z_1\bar{w}_1} + \frac{F^2(z,w)}{1-z_2\bar{w}_2} +
G(z,w).
\end{aligned}
\]

\end{proof}

The following general Hilbert space lemma was used above.  It makes a
few arguments later easier to digest.

\begin{lemma} \label{lem:orth}
Let $\mcH$ be a Hilbert space with (closed) subspaces
$\mcK_1,\mcK_2$. If $\mcH \ominus \mcK_1 \subset \mcK_2$, then
\[
\mcH\ominus \mcK_1 = \mcK_2 \ominus (\mcK_1\cap \mcK_2).
\]
\end{lemma}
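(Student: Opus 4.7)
The plan is to prove the two inclusions $\mcH \ominus \mcK_1 \subset \mcK_2 \ominus (\mcK_1 \cap \mcK_2)$ and $\mcK_2 \ominus (\mcK_1 \cap \mcK_2) \subset \mcH \ominus \mcK_1$ directly, using only the hypothesis and the orthogonal decomposition $\mcH = \mcK_1 \oplus (\mcH \ominus \mcK_1)$. Write $\mcM := \mcH \ominus \mcK_1$ for brevity.

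The forward inclusion is essentially immediate. By hypothesis $\mcM \subset \mcK_2$, so it suffices to observe that $\mcM \perp \mcK_1 \cap \mcK_2$, which holds because $\mcM \perp \mcK_1$ by definition and $\mcK_1 \cap \mcK_2 \subset \mcK_1$. This gives $\mcM \subset \mcK_2 \ominus (\mcK_1 \cap \mcK_2)$.

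For the reverse inclusion, the idea I would use is the following orthogonality extraction trick. Take any $x \in \mcK_2$ with $x \perp \mcK_1 \cap \mcK_2$, and decompose it with respect to $\mcH = \mcK_1 \oplus \mcM$ as $x = x_1 + x_2$ with $x_1 \in \mcK_1$ and $x_2 \in \mcM$. Since $\mcM \subset \mcK_2$, we have $x_2 \in \mcK_2$, and then $x_1 = x - x_2 \in \mcK_2$ as well. Therefore $x_1 \in \mcK_1 \cap \mcK_2$, so the orthogonality assumption gives $\langle x, x_1 \rangle = 0$. But
\[
\langle x, x_1 \rangle = \langle x_1 + x_2, x_1 \rangle = \|x_1\|^2 + \langle x_2, x_1 \rangle = \|x_1\|^2,
\]
since $x_2 \in \mcM \perp \mcK_1 \ni x_1$. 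Thus $x_1 = 0$ and $x = x_2 \in \mcM$, which finishes the proof.

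There is really no substantive obstacle here; this is a pure diagram-chase in a Hilbert space, and the only subtle point is recognizing that the hypothesis $\mcM \subset \mcK_2$ is exactly what is needed to force the component $x_1$ to land in $\mcK_1 \cap \mcK_2$ and thereby be killed by the orthogonality hypothesis.
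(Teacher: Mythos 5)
Your proof is correct and uses essentially the same ingredients as the paper's: the decomposition $\mcH = \mcK_1 \oplus \mcM$, the hypothesis $\mcM \subset \mcK_2$ to force the $\mcK_1$-component into $\mcK_1 \cap \mcK_2$, and orthogonality to kill it. The only cosmetic difference is that you prove the reverse inclusion directly by decomposing an arbitrary $x \in \mcK_2 \ominus (\mcK_1 \cap \mcK_2)$, whereas the paper shows the orthogonal complement of $\mcM$ inside $\mcK_2 \ominus (\mcK_1 \cap \mcK_2)$ is trivial; both routes are equivalent and equally short.
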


\begin{proof}
The inclusion $\subset$ is trivial.  For the opposite direction,
suppose $f \in \mcK_2\ominus (\mcK_1\cap \mcK_2)$ and $f \perp \mcH
\ominus \mcK_1$.  Then, $f \in \mcK_1$ and $f \in \mcK_2$ making $f$
orthogonal to itself.  Hence, $f=0$.
\end{proof}

 \section{Two general lemmas on reproducing kernels}

 We record two standard facts about reproducing kernel Hilbert
 spaces. See \cite{nA50} or \cite{AM02} for more general information.  

Let $B(\mcV)$ be the bounded linear operators on $\mcV$, our separable
Hilbert space with inner product $\ip{\cdot}{\cdot}_{\mcV}$.  Given a
function $H: X\times X \to B(\mcV)$, the notation
 \[
 H \kgeq 0
 \]
 means $H$ is a positive semidefinite kernel. Namely, for any
 $x_1,\dots, x_N \in X$, the block operator on $\mcV^N$ 
 \[
 (H(x_j,x_k))_{jk}
 \]
 is positive semidefinite.  In addition, for $K:X\times X \to
 B(\mcV)$, the notation $H\kgeq K$ means $H - K \kgeq 0$.  We write
 $\mathcal{H}(K)$ for the canonical reproducing kernel Hilbert space
 of $\mcV$ valued functions on $X$ associated to a positive
 semidefinite kernel $K,$ and $\ip{\cdot}{\cdot}_{\mcH(K)}$ denotes the
 inner product in $\mcH(K)$.  The reproducing kernels are $K_x(\cdot)v
 = K(\cdot,x)v$ for $x \in X, v\in \mcV$, by which we mean
\[
\ip{f}{K_x v}_{\mcH(K)} = \ip{f(x)}{v}_{\mcV}
\]
for any $f\in \mcH(K)$.  The span of these functions is dense in
$\mcH(K)$.

\begin{remark} The setting of
  vector valued functions and operator valued kernels can easily be
  reduced to the setting found in standard references of scalar
  functions and kernels by viewing our ``points'' as elements of
  $X\times \mcV$ as opposed to $X$.  Evaluating $f \in \mcH(K)$ at the
  ``point'' $(x,v)$ would refer to $\ip{f(x)}{v}_{\mcV}$.
\end{remark}

It is a standard fact that a function $f:X \to \mcV$ is in
$\mcH(K)$ if and only if there is an $\alpha \geq 0$ such that
\[
\alpha K(y,x) \kgeq f(y)f(x)^*,
\]
where for any $v \in \mcV$, $v^*$ denotes the functional
$\ip{\cdot}{v}_{\mcV}$.  The minimum of all such $\alpha$'s is
$\|f\|_{\mcH(K)}^{2}$. So, if $H \kgeq K \kgeq 0$ and if $f\ne
0,$ then
\[
H(y,x) \kgeq K(y,x) \kgeq \frac{f(y)f(x)^*}{\|f\|^2_{\mcH(K)}}.
\]
This implies $\|f\|_{\mcH(H)} \leq \|f\|_{\mcH(K)},$ which gives the
following lemma:

 \begin{lemma} \label{lem:poskernels}
If $H \kgeq K \kgeq 0$ on $X$, then $\mcH(K) \subseteq \mcH(H),$ and the
embedding $\iota: \mcH(K) \to \mcH(H)$ is a contraction.
 \end{lemma}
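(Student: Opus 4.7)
The plan is to use the characterization of membership in a reproducing kernel Hilbert space recalled just before the lemma: a function $f:X\to \mcV$ lies in $\mcH(K)$ if and only if there exists $\alpha\geq 0$ with $\alpha K(y,x)\kgeq f(y)f(x)^*$, and moreover the infimum of such $\alpha$ equals $\|f\|_{\mcH(K)}^2$. Granted this, the lemma is essentially a one-line transitivity argument.

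First I would take an arbitrary $f \in \mcH(K)$; the case $f=0$ is trivial, so assume $f\ne 0$. By the characterization applied with $\alpha = \|f\|_{\mcH(K)}^2$ we obtain
\[
\|f\|_{\mcH(K)}^2\, K(y,x) \kgeq f(y)f(x)^*.
\]
Next I would invoke the hypothesis $H \kgeq K$, which gives $\|f\|_{\mcH(K)}^2\, H(y,x) \kgeq \|f\|_{\mcH(K)}^2\, K(y,x)$, and then chain the two inequalities to conclude
\[
\|f\|_{\mcH(K)}^2\, H(y,x) \kgeq f(y)f(x)^*.
\]
Applying the characterization in the reverse direction for $H$, this exhibits $f$ as an element of $\mcH(H)$ with the bound $\|f\|_{\mcH(H)}^2 \leq \|f\|_{\mcH(K)}^2$. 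This simultaneously gives the inclusion $\mcH(K) \subseteq \mcH(H)$ and shows that the inclusion map $\iota$ is contractive.

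There is no real obstacle here, since the hard content is packaged into the standard characterization of $\mcH(K)$ via majorization of rank-one kernels, which is quoted from the literature. If one wanted a fully self-contained proof, the only nontrivial step would be verifying that characterization; this is done by writing $f$ as a limit of finite linear combinations $\sum_j K(\cdot,x_j)v_j$, using the Cauchy--Schwarz inequality in $\mcH(K)$ to convert norm bounds into the positivity statement $\alpha K\kgeq ff^*$, and using the Moore--Aronszajn construction for the converse.
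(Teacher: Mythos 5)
Your proof is correct and coincides with the paper's own argument, which is given in the paragraph immediately preceding the lemma statement: both use the standard characterization that $f\in\mcH(K)$ iff $\alpha K(y,x)\kgeq f(y)f(x)^*$ for some $\alpha\geq 0$, with the minimal $\alpha$ equal to $\|f\|_{\mcH(K)}^2$, and then chain $H\kgeq K\kgeq f f^*/\|f\|_{\mcH(K)}^2$ to deduce membership in $\mcH(H)$ and the norm bound. No gap to report.
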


If $F\subset X$ is a finite set and $v:F \to \mcV$ is a function, then
\[
\|\sum_{x \in F}  K_x v(x) \|_{\mcH(K)}^2 = \sum_{x,y \in F}
\ip{K(y,x)v(x)}{v(y)}_{\mcV}
\]
essentially by definition of the inner product in $\mcH(K)$.  Set $f =
\sum_{x \in F} K_x v(x)$.  So, if $H \kgeq K \kgeq 0$, then  $f \in 
\mcH(H)$ and $\|f\|_{\mcH(H)}^2 \leq \|f\|_{\mcH(K)}^2$,
i.e.
\[
\sum_{x,y \in F} \ip{K_xv(x)}{K_yv(y)}_{\mcH(H)} \leq \sum_{x,y \in F}
\ip{K(y,x)v(x)}{v(y)}_{\mcV}.
\]

We also need the following:

 \begin{lemma} \label{lem:RKHS}
Let $H$ be a
   positive semidefinite kernel on $X,$ and let $\mcK$ be a closed
   subspace of $\mcH(H)$ with reproducing kernel $K$.  Suppose $H
   \kgeq L \kgeq 0$ and $L_x(\cdot)v = L(\cdot,x)v \in \mcK$ for all $x
   \in X, v\in \mcV$.  Then, $K \kgeq L$.
 \end{lemma}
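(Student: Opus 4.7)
The plan is to manufacture a contractive map $\iota : \mcH(L) \to \mcK$ which sends each kernel function $L_x v$ to itself, identify its adjoint, and then deduce $K \kgeq L$ from the operator inequality $I - \iota \iota^* \geq 0$ on $\mcK$.

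The first step is to apply Lemma \ref{lem:poskernels} to $H \kgeq L$, producing a contractive embedding $\mcH(L) \hookrightarrow \mcH(H)$. The image of this embedding is the $\mcH(H)$-closure of $\mathrm{span}\{L_x v : x\in X,\ v\in \mcV\}$, which by hypothesis is contained in the closed subspace $\mcK$. Since $\mcK$ inherits its norm from $\mcH(H)$, the resulting map $\iota : \mcH(L) \to \mcK$ satisfies $\|\iota f\|_{\mcK} = \|f\|_{\mcH(H)} \leq \|f\|_{\mcH(L)}$ and is therefore a contraction.

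Next, I would pin down $\iota^*$ on the reproducing kernels of $\mcK$. For any $f \in \mcH(L)$, $x \in X$ and $v \in \mcV$, the definition of adjoint together with the reproducing properties of $K$ in $\mcK$ and of $L$ in $\mcH(L)$ give
\[
\langle f, \iota^* K_x v\rangle_{\mcH(L)} = \langle \iota f, K_x v\rangle_{\mcK} = \langle f(x), v\rangle_{\mcV} = \langle f, L_x v\rangle_{\mcH(L)},
\]
so $\iota^* K_x v = L_x v$, and hence $\iota \iota^* K_x v = L_x v$ regarded as an element of $\mcK$.

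The final step is formal. Because $\|\iota\| \leq 1$, we have $I - \iota \iota^* \geq 0$ as operators on $\mcK$. For any finite $x_1,\ldots,x_N \in X$ and $v_1,\ldots,v_N \in \mcV$, setting $g = \sum_j K_{x_j} v_j$ and expanding $\langle (I - \iota\iota^*)g, g\rangle_{\mcK}$ via the reproducing property of $K$ yields
\[
0 \leq \sum_{i,j} \langle (K(x_i,x_j) - L(x_i,x_j)) v_j, v_i\rangle_{\mcV},
\]
which is exactly the condition $K \kgeq L$. The only point requiring genuine care is verifying in the first step that the image of $\mcH(L)$ really lands inside $\mcK$ rather than merely inside $\mcH(H)$; this is where the closedness of $\mcK$ is used. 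With that in hand, the identification of $\iota^*$ and the invocation of $I - \iota\iota^* \geq 0$ are routine.
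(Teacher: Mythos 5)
Your proof is correct and uses essentially the same approach as the paper. Both arguments reduce to the inequality $\bigl\|\sum_j L_{x_j}v_j\bigr\|_{\mcH(L)}^2 \leq \bigl\|\sum_j K_{x_j}v_j\bigr\|_{\mcK}^2$; the paper gets it directly via a Cauchy--Schwarz computation plus the contractivity of the embedding from Lemma \ref{lem:poskernels}, while you repackage the same facts operator-theoretically through $\iota^*K_xv = L_xv$ and $I - \iota\iota^* \geq 0$ for a contraction $\iota$.
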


\begin{proof}
Let $F \subset X$ be a finite set and $v:F \to \mcV$ a function. Define
$f = \sum_{x \in F} L_x v(x)$, $g = \sum_{x \in F} K_xv(x)$.
Then, $f,g \in \mcK$ and $f \in \mcH(L)$.  Therefore,
\[
\begin{aligned}
\| f\|^2_{\mcH(L)} &= \sum_{x,y \in F} \ip{L(y,x)v(x)}{v(y)}_{\mcV}\\
&= \ip{\sum_{x \in F} L_xv(x)}{\sum_{y \in F} K_yv(y)}_{\mcH(H)} \\
&= \ip{f}{g}_{\mcH(H)} \\
&\leq \|f\|_{\mcH(H)} \|g\|_{\mcH(H)} \\
&\leq \|f\|_{\mcH(L)} \|g\|_{\mcH(H)} \text{ by Lemma
  \ref{lem:poskernels}, } 
\end{aligned}
\]
which shows $\|f\|^2_{\mcH(L)} \leq \|g\|^2_{\mcH(H)}$.  Expanding this
out shows
\[
\sum_{x,y \in F} \ip{L(y,x)v(x)}{v(y)}_{\mcV} \leq \sum_{x,y \in F} \ip{K(y,x)v(x)}{v(y)}_{\mcV},
\]
which shows $L \kleq K$.
\end{proof}

 \section{Theorem \ref{thm:maxmin} on maximality and minimality}

 The ``canonical'' Agler decompositions from Theorem
 \ref{thm:fundagler} are maximal and minimal in the sense described in
 the following theorem. Moreover, all other Agler decompositions can
 be characterized in terms of properties of $F^1,F^2,G$.

 This maximality and minimality property is found in Theorem 5.5 of
 \cite{BSV05}.  The following result is more general in one
 sense; we consider Agler decompositions which do not necessarily come
 from an orthogonal decomposition inside $H^2$. 

This generality is nontrivial. Specifically, by considering monomial inner 
functions like $\phi(z)=z_1^2z_2$, one can show there are Agler decompositions
that cannot  be written as convex combinations of Agler decompositions
coming from orthogonal decompositions inside $H^2.$ See Example
\ref{nonextremeaglerdecomps}.

\begingroup
\def\thetheorem{\ref{thm:maxmin}}
 \begin{theorem} 
 Let $\phi:\D^2 \to \mathcal{B}_1$ be inner and let 
 $(A^1,A^2)$ be Agler kernels of $\phi$. Then, for $j,k \in \{1,2\}$ distinct 
 \[
 G^j(z,w) := \frac{A^j(z,w) - F^j(z,w)}{1-z_j\bar{w}_j} 
 =\frac{E^k(z,w) - A^k(z,w)}{1-z_k\bar{w}_k}
 \]
 is positive semidefinite and
 \[
 G(z,w) = G^1(z,w)+G^2(z,w).
 \]
 Conversely, suppose $G^1,G^2$ are positive semidefinite and satisfy
 $G=G^1+G^2$, while for $j=1,2,$
 \[
 A^j(z,w) := F^j(z,w)+(1-z_j\bar{w}_j)G^j(z,w) 
 \]
 is positive semidefinite.  Then, $(A^1,A^2)$ are Agler kernels of $\phi$.
 \end{theorem}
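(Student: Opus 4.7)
The plan is to dispatch the algebraic content (the equality of the two expressions for $G^j$ and the relation $G^1+G^2=G$) by comparing the three Agler-type identities for $1-\phi\phi^*$, and then to establish positivity of $G^j$ via a containment of reproducing kernel Hilbert spaces inside $\Hphi^j$. Setting the given identity $1-\phi(z)\phi(w)^* = (1-z_1\bar{w}_1)A^2 + (1-z_2\bar{w}_2)A^1$ equal to the two canonical expressions from Theorem \ref{thm:fundagler} and rearranging produces $(1-z_1\bar{w}_1)(A^2-F^2) = (1-z_2\bar{w}_2)(E^1-A^1)$ and its symmetric mate, which simultaneously verify the two formulas for $G^j$. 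Dividing the $(A^1,A^2)$ identity by $(1-z_1\bar{w}_1)(1-z_2\bar{w}_2)$ and subtracting the second canonical form
\[
K := \frac{1-\phi(z)\phi(w)^*}{(1-z_1\bar{w}_1)(1-z_2\bar{w}_2)} = \frac{F^1}{1-z_1\bar{w}_1} + \frac{F^2}{1-z_2\bar{w}_2} + G
\]
immediately yields $G^1+G^2=G$.

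The main step is showing $G^j \kgeq 0$, which I argue for $j=2$ using the form $G^2 = (E^1-A^1)/(1-z_1\bar{w}_1)$; the case $j=1$ is symmetric. The identity $K = \frac{A^1}{1-z_1\bar{w}_1} + \frac{A^2}{1-z_2\bar{w}_2}$ together with $A^1,A^2 \kgeq 0$ gives $K \kgeq \frac{A^1}{1-z_1\bar{w}_1} \kgeq 0$, so by Lemma \ref{lem:poskernels} the space $\mcH(A^1/(1-z_1\bar{w}_1))$ embeds as a subspace of $\Hphi$. Next, the kernel identity $\frac{A^1}{1-z_1\bar{w}_1} - z_1\bar{w}_1 \frac{A^1}{1-z_1\bar{w}_1} = A^1 \kgeq 0$ is the standard multiplier criterion asserting that $Z_1$ acts as a contraction on $\mcH(A^1/(1-z_1\bar{w}_1))$, so this RKHS is $Z_1$-invariant. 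Iterating and invoking Proposition \ref{prop:max} then places $\mcH(A^1/(1-z_1\bar{w}_1))$ inside $\Hphi^1$; since $\Hphi^1$ has reproducing kernel $E^1/(1-z_1\bar{w}_1)$, Lemma \ref{lem:RKHS} delivers $\frac{E^1}{1-z_1\bar{w}_1} \kgeq \frac{A^1}{1-z_1\bar{w}_1}$, so $G^2 \kgeq 0$.

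For the converse, assuming $G^1,G^2 \kgeq 0$ with $G^1+G^2 = G$ and each $A^j = F^j + (1-z_j\bar{w}_j)G^j$ positive, I expand
\[
(1-z_1\bar{w}_1)A^2 + (1-z_2\bar{w}_2)A^1 = (1-z_1\bar{w}_1)F^2 + (1-z_2\bar{w}_2)F^1 + (1-z_1\bar{w}_1)(1-z_2\bar{w}_2)(G^1+G^2),
\]
and substituting $G^1+G^2 = G$ reduces the right-hand side to $1-\phi(z)\phi(w)^*$ by Theorem \ref{thm:fundagler}, confirming that $(A^1,A^2)$ are Agler kernels. The main obstacle in the forward direction is the maximality passage from ``$\mcH(A^1/(1-z_1\bar{w}_1))$ is $Z_1$-invariant and sits in $\Hphi$'' to ``it sits in $\Hphi^1$''; this is precisely the content of Proposition \ref{prop:max}, so with that proposition in hand the positivity of $G^j$ reduces to routine kernel manipulations.
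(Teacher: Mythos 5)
Your proposal is correct and follows essentially the same route as the paper: embed $\mcH(A^1/(1-z_1\bar{w}_1))$ into $\Hphi$ via Lemma \ref{lem:poskernels}, establish $Z_1$-invariance from $A^1\kgeq 0$, apply Proposition \ref{prop:max} to land inside $\Hphi^1$, and then invoke Lemma \ref{lem:RKHS} to conclude $E^1/(1-z_1\bar{w}_1) \kgeq A^1/(1-z_1\bar{w}_1)$, hence $G^2\kgeq 0$; the converse is the same direct expansion.
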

\addtocounter{theorem}{-1}
\endgroup

 For a quick corollary, observe that if $G=0$, or equivalently, $\Kphi
 = \{0\}$, then $\phi$ has a unique Agler decomposition.  On the other
 hand, if $\phi$ has a unique Agler decomposition, then $E^2=F^2$,
 $E^1=F^1$, and then $G=0$, yielding the following:

 \begin{corollary} \label{cor:unique}
 An inner function $\phi$ has a unique Agler decomposition if and only
 if $\Kphi = \{0\}.$
 \end{corollary}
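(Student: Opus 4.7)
The plan is to deduce this directly from Theorem \ref{thm:maxmin}, using the parametrization of Agler decompositions by pairs $(G^1,G^2)$ of positive semidefinite kernels with $G^1+G^2=G$.

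For the forward implication, suppose $\Kphi=\{0\}$. Since $G$ is the reproducing kernel of $\Kphi$, we have $G\equiv 0$. If $(A^1,A^2)$ is any Agler decomposition, Theorem \ref{thm:maxmin} produces positive semidefinite kernels $G^1,G^2$ with $G^1+G^2=G=0$; positivity forces $G^1=G^2=0$, and hence $A^j=F^j$ for $j=1,2$. So the canonical decomposition $(F^1,F^2)$ is the only one.

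For the converse, suppose $\phi$ has a unique Agler decomposition. Theorem \ref{thm:fundagler} provides two a priori different decompositions $(F^1,E^2)$ and $(E^1,F^2)$, so uniqueness forces $E^1=F^1$ and $E^2=F^2$. Equating the two expressions in \eqref{agdecomp} from Theorem \ref{thm:fundagler} and cancelling then yields $G\equiv 0$, which gives $\Kphi=\{0\}$ (since $G$ is the reproducing kernel of $\Kphi$).

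The argument is essentially bookkeeping once Theorem \ref{thm:maxmin} is in hand; the only mild subtlety is justifying that $G^1+G^2=0$ with both kernels positive semidefinite forces each to vanish, which follows by evaluating on the diagonal and using that a positive semidefinite kernel vanishing on the diagonal must vanish identically (via Cauchy--Schwarz $|G^j(z,w)|^2 \leq G^j(z,z)G^j(w,w)$ applied componentwise against vectors in $\mcV$). No real obstacle is expected.
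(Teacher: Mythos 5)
Your proof is correct and follows essentially the same route as the paper: the forward direction parametrizes Agler decompositions via $G^1+G^2=G$ from Theorem \ref{thm:maxmin} and notes that $G=0$ collapses everything to $(F^1,F^2)$, while the converse observes that uniqueness forces $E^j=F^j$ and hence $G=0$ by comparing the two expressions in \eqref{agdecomp}. The extra detail you supply in the forward direction (that two positive semidefinite kernels summing to zero must each vanish) is a correct, if routine, fleshing-out of what the paper leaves implicit.
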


 \begin{proof}[Proof of Theorem]
 Set $L(z,w) = A^1(z,w)/(1-z_1\bar{w}_1).$ Since
 \[
 \frac{1-\phi(z)\phi(w)^*}{(1-z_1\bar{w}_1)(1-z_2\bar{w}_2)} =
   \frac{A^1(z,w)}{1-z_1\bar{w}_1} + \frac{A^2(z,w)}{1-z_2\bar{w}_2} \kgeq
   L(z,w),
 \]
 Lemma \ref{lem:poskernels} implies that $\mcH(L) \subseteq \Hphi.$ In addition, 
 \[
 (1-z_1\bar{w}_1)L(z,w) = A^1(z,w) \kgeq 0
 \]
 shows $\mcH(L)$ is invariant under multiplication by $Z_1.$ In particular, 
$Z^j_1L_{w}v \in \mcH(L) \subseteq \Hphi$ for all $j\geq 0$, $w \in
 \D^2, v \in \mcV$.  Then, by Proposition \ref{prop:max}, each $L_wv \in \Hphi^1.$ 
It follows from Lemma \ref{lem:RKHS} that
 \[
 \frac{E^1(z,w)}{1-z_1\bar{w}_1} \kgeq \frac{A^1(z,w)}{1-z_1\bar{w}_1},
 \]
and so $G^2(z,w) \kgeq 0.$ The remainder of the forward implication follows 
from algebraic manipulations.  

For the converse, we immediately have
 \[
 \begin{aligned}
 & (1-z_1\bar{w}_1)A^2(z,w) + (1-z_2\bar{w}_2)A^1(z,w) \\
 =& (1-z_1\bar{w}_1)(F^2(z,w)+(1-z_2\bar{w}_2)G^2(z,w)) \\
 &+ (1-z_2\bar{w}_2)(F^1(z,w)+(1-z_1\bar{w}_1)G^1(z,w)) \\
 =& (1-z_1\bar{w}_1)F^2(z,w)+(1-z_2\bar{w}_2)F^1(z,w) \\
 &+ (1-z_1\bar{w}_1)(1-z_2\bar{w}_2)G(z,w) \\
= & 1-\phi(z)\phi(w)^*,
 \end{aligned}
 \]
 so that $(A^1,A^2)$ are Agler kernels of $\phi.$
 \end{proof}

\section{More details on the canonical subspaces}
The previous sections show that the study of an inner function in two
variables hinges on the subspaces:
\[
(\Hphi^j\ominus \Kphi) \ominus Z_j (\Hphi^j\ominus \Kphi) \text{ for } j=1,2
\]
\[
\Hphi^j \ominus Z_j \Hphi^j \text{ for } j=1,2
\]
and $\Kphi$.  The main point of this section is that all of these
subspaces are ``small'' in the sense that they sit inside either
$\Kphi^1$ or $\Kphi^2$. 
Let 
\[
\begin{aligned}
L^2_{0-} &= \{f \in L^2: \supp(\hat{f}) \subset \{0\} \times \Z_{-}\}\\
L^2_{0+} &= \{f \in L^2: \supp(\hat{f}) \subset \{0\} \times \Z_{+}\}
\end{aligned}
\]
and define $L^2_{-0}$ and $L^2_{+0}$ similarly.  We shall use $A \vee
B$ to denote the closed linear span of two sets $A$ and $B$ in a common
Hilbert space.

\begin{prop} \label{prop:sub1}
\[
(\Hphi^1\ominus \Kphi) \ominus Z_1(\Hphi^1\ominus \Kphi) =
\mcK^1_{\phi} \ominus \Kphi = \overline{P_{\Kphi^1} (\phi L^2_{0-})}
\]
\[
(\Hphi^2\ominus \Kphi) \ominus Z_2(\Hphi^2\ominus \Kphi) =
\mcK^2_{\phi} \ominus \Kphi = \overline{P_{\Kphi^2} (\phi L^2_{-0})}.
\]
\end{prop}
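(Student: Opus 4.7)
The plan is to handle the two equalities separately. By symmetry it suffices to treat $j=1$; the case $j=2$ is identical after interchanging $Z_1$ and $Z_2$. I would prove the right-hand equality first via direct Fourier bookkeeping, then deduce the left-hand equality from Lemma \ref{lem:orth}.

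For $\mcK^1_{\phi} \ominus \Kphi = \overline{P_{\mcK^1_\phi}(\phi L^2_{0-})}$: first, the disjoint Fourier supports $\{0\}\times\Z_-$ and $\Z_-\times\Z_-$ (together with $\phi^*\phi = I$ a.e.) give $\phi L^2_{0-} \perp \Kphi$, so $\overline{P_{\mcK^1_\phi}(\phi L^2_{0-})} \subseteq \mcK^1_\phi \ominus \Kphi$. For the reverse inclusion I would show that $f \in \mcK^1_\phi$ with $f \perp \phi L^2_{0-}$ forces $f \in \Kphi$: writing $f = Z_1 \phi h$ with $h \in L^2_{--}$, the orthogonality translates to $Z_1 h \perp L^2_{0-}$, which (since $Z_1 h$ a priori has Fourier support in $\Z_{\leq 0}\times\Z_-$) forces $Z_1 h \in L^2_{--}$ and hence $f = \phi(Z_1 h) \in H^2 \cap \phi L^2_{--} = \Kphi$.

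For the left equality, I apply Lemma \ref{lem:orth} with $\mcH = \Hphi^1 \ominus \Kphi$, $\mcK_1 = Z_1(\Hphi^1 \ominus \Kphi)$, and $\mcK_2 = \mcK^1_\phi \ominus \Kphi$. The trivial intersection $\mcK_1 \cap \mcK_2 = \{0\}$ follows because if $f = Z_1 g \in \mcK^1_\phi$ with $g \in \Hphi^1 \ominus \Kphi$, then writing $f = Z_1\phi h$ ($h \in L^2_{--}$) gives $g = \phi h \in H^2 \cap \phi L^2_{--} = \Kphi$, and $g \perp \Kphi$ then forces $g = 0$. The main technical step is $\mcH \ominus \mcK_1 \subseteq \mcK_2$. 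Setting $\mathcal{N} := \{h \in L^2_{--} : \phi h \in H^2\}$ (closed since multiplication by $\phi$ is bounded and $H^2$ is closed), I write $f = \phi k$ with $k \in L^2_{\bullet -}$ (using $f \in \Hphi^1$). The conditions $f \perp \Kphi$ and $f \perp Z_1(\Hphi^1 \ominus \Kphi)$ become $k \perp \mathcal{N}$ and $\bar Z_1 k \perp (L^2_{\bullet -} \cap \mathcal{N}^\perp)$, respectively. Since $\mathcal{N} \subseteq L^2_{\bullet -}$, a standard argument yields the orthogonal direct-sum decomposition $L^2_{\bullet -} = \mathcal{N} \oplus (L^2_{\bullet -} \cap \mathcal{N}^\perp)$, and since $\bar Z_1 k \in L^2_{\bullet -}$, the second orthogonality forces $\bar Z_1 k \in \mathcal{N} \subseteq L^2_{--}$. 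Hence $k \in Z_1 L^2_{--}$, so $f = \phi k \in H^2 \cap Z_1\phi L^2_{--} = \mcK^1_\phi$, placing $f \in \mcK_2$. Lemma \ref{lem:orth} then yields $\mcH \ominus \mcK_1 = \mcK_2$.

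The main obstacle is unpacking $f \perp Z_1(\Hphi^1 \ominus \Kphi)$ into a usable constraint on the multiplier $k$; the key is recognizing that in the orthogonal splitting $L^2_{\bullet -} = \mathcal{N} \oplus (L^2_{\bullet -} \cap \mathcal{N}^\perp)$, the first summand is exactly $\mathcal{N}$ (not some larger space), which is what allows one to conclude $\bar Z_1 k \in L^2_{--}$ and ultimately $f \in \mcK^1_\phi$.
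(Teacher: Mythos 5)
Your argument for the right-hand equality $\Kphi^1 \ominus \Kphi = \overline{P_{\Kphi^1}(\phi L^2_{0-})}$ is correct, and the setup for the left-hand equality via Lemma~\ref{lem:orth} (with $\mcH = \Hphi^1\ominus\Kphi$, $\mcK_1 = Z_1(\Hphi^1\ominus\Kphi)$, $\mcK_2 = \Kphi^1\ominus\Kphi$, and the verification $\mcK_1\cap\mcK_2=\{0\}$) is reasonable. However, the key step --- checking the hypothesis $\mcH\ominus\mcK_1\subseteq\mcK_2$ --- contains a genuine gap. Since $\Hphi^1 = H^2\cap\phi L^2_{\bullet-}$, we have $\Hphi^1 = \phi\mathcal{M}$ where $\mathcal{M}:=\{h\in L^2_{\bullet-}:\phi h\in H^2\}$, \emph{not} $\phi L^2_{\bullet-}$; the constraint $\phi h\in H^2$ on the multiplier cannot be dropped. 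Hence $\Hphi^1\ominus\Kphi = \phi(\mathcal{M}\cap\mathcal{N}^\perp)$, and $f\perp Z_1(\Hphi^1\ominus\Kphi)$ translates to $\bar Z_1 k\perp\mathcal{M}\cap\mathcal{N}^\perp$, which is strictly weaker than the $\bar Z_1 k\perp L^2_{\bullet-}\cap\mathcal{N}^\perp$ you invoke. The missing piece $\bar Z_1 k\perp(L^2_{\bullet-}\ominus\mathcal{M})$ does not follow from $k\in\mathcal{M}$: $\mathcal{M}$ is only forward $Z_1$-invariant ($\bar Z_1\mathcal{M}\supsetneq\mathcal{M}$ in general), so $Z_1(L^2_{\bullet-}\ominus\mathcal{M})$ can re-enter $\mathcal{M}$. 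In fact the intermediate conclusion $\bar Z_1 k\in\mathcal{N}$ that you draw is outright false in general, since it would force $\phi\bar Z_1 k = \bar Z_1 f\in H^2$, which fails whenever a wandering element $f$ has a nonzero $z_1$-constant Fourier coefficient. For example, with $\phi = \frac{2z_1z_2-z_1-z_2}{2-z_1-z_2}$ the space $\Kphi^1\ominus\Kphi$ is spanned by $\frac{1-z_1}{2-z_1-z_2}$, which does not vanish on $\{0\}\times\T$. (One only needs $\bar Z_1 k\in L^2_{--}$, but your argument does not establish even that.)

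The paper avoids this by \emph{not} remaining inside $\Hphi^1$. It introduces (for $j=2$; interchange the variables for $j=1$) the auxiliary subspaces $\mcR = L^2_{+\bullet}\cap\phi L^2_{--}$ and $\mcQ = L^2_{+\bullet}\cap\phi L^2_{-\bullet}$, first identifies $\mcQ\ominus\mcR = \Hphi^2\ominus\Kphi$ via Proposition~\ref{prop:bickel}, and then uses the fact that $\mcQ$ is $Z_2$-\emph{reducing} to collapse the wandering subspace to $Z_2\mcR\ominus\mcR$. Only after that is Lemma~\ref{lem:orth} applied, with ambient space $Z_2\mcR$, to intersect with $\Hphi^2$ and obtain $\Kphi^2\ominus\Kphi$. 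Working inside the $Z_j$-reducing ambient space $L^2_{+\bullet}$ (resp.\ $L^2_{\bullet+}$) is precisely what makes the shift computation go through and is the ingredient your argument is missing.
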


\begin{proof} 
 We define the two subspaces $\mcQ$ and $\mcR$:
\[
\begin{aligned}
\mcR &= L^2_{+\bullet} \cap \phi L^2_{--} \\
\mcQ &= L^2_{+\bullet} \cap \phi L^2_{-\bullet} = L^2_{+\bullet}
\ominus \phi L^2_{+\bullet}
\end{aligned}
\]
and calculate 

\[
\begin{aligned}
\mcQ \ominus \mcR &= P_{\mcQ} ( \mcR^{\perp} )&& \nonumber \\
& = P_{\mcQ} ( L^2_{- \bullet} \vee \phi(L^2_{-+} \oplus L^2_{+ \bullet}))&& \nonumber \\
& = \overline{ P_{\mcQ} (L^2_{- \bullet} + \phi(L^2_{-+} \oplus
L^2_{+ \bullet}) )} &&\nonumber \\
&= \overline{P_{\mcQ} ( \phi L^2_{-+})} && \nonumber \text{ since } \mcQ
\perp L^2_{-\bullet}, \phi L^2_{+\bullet} \\
&= \overline{(P_{\mcQ}+P_{\phi L^2_{+\bullet}})( \phi L^2_{-+})} 
&& \text{ since }  P_{\phi L^2_{+\bullet}}( \phi L^2_{-+}) = 0 \\
& =  \overline{P_{ L^2_{+\bullet} } ( \phi L^2_{-+})} &&  \nonumber \text{ since }
L^2_{+ \bullet}  = \mcQ \oplus \phi L^2_{+ \bullet}  \\ 
& = \Hphi^2 \ominus \Kphi  &&\text{ by Proposition \ref{prop:bickel}.} 
\end{aligned}
\]

Therefore, the ``wandering'' subspace satisfies
\[
\begin{aligned}
(\Hphi^2\ominus \Kphi) \ominus Z_2(\Hphi^2\ominus \Kphi) &=
(\mcQ \ominus \mcR) \ominus Z_2 (\mcQ \ominus \mcR) \\
&=(\mcQ \ominus \mcR)\ominus (\mcQ \ominus Z_2\mcR) \quad \text{ since }
  Z_2\mcQ = \mcQ \\
&=Z_2\mcR \ominus \mcR \subset \Hphi^2.
\end{aligned}
\]
Recall that $Z_2\mcR \cap \Hphi^2 = H^2 \cap Z_2\phi L^2_{--} =
\Kphi^2 .$ Using Lemma \ref{lem:orth}, we now intersect with $\Hphi^2$
to obtain
\[
\begin{aligned}
Z_2\mcR \ominus \mcR &= ( Z_2\mcR \cap \Hphi^2) \ominus ( \mcR \cap \Hphi^2) \\
&= \Kphi^2 \ominus \Kphi 
\end{aligned}
\]
equals the wandering subspace. We can also identify $\Kphi^2 \ominus \Kphi$ with 
a ``closure of a projection'' as follows:
\[
\begin{aligned}
\Kphi^2 \ominus \Kphi &= P_{\Kphi^2 } ((\Kphi)^{\perp})  \\
&=  P_{\Kphi^2 } ((L^2 \ominus H^2) \vee \phi(L^2_{+ \bullet} \oplus L^2_{-+})  )\\
&= \overline{ P_{\Kphi^2 } ( (L^2 \ominus H^2) + \phi(L^2_{+
\bullet} \oplus L^2_{-+}) )} \\
& =  \overline{ P_{\Kphi^2 }  (\phi L^2_{-0})},
\end{aligned}
\]
since $\Kphi^2 \subset Z_2 \phi L^2_{--} \perp
\phi(L^2_{+\bullet}\oplus Z_2 L^2_{-+}).$
\end{proof}

\begin{prop} \label{prop:sub2}
\[ 
\begin{aligned}
\Hphi^1 \ominus Z_1 \Hphi^1 = \mcK^1_{\phi} \ominus Z_1\Kphi &= \overline{ P_{\Kphi^1} (L^2_{0+})} \\
\Hphi^2 \ominus Z_2 \Hphi^2 = \mcK^2_{\phi} \ominus Z_2\Kphi &= \overline{ P_{\Kphi^2} (L^2_{+0})}.
\end{aligned}
\]
\end{prop}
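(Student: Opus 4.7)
By symmetry in the two variables I focus on the $j=1$ case. The idea is to bootstrap from Propositions \ref{prop:sub1} and \ref{thm:shift} via a Halmos-style wandering subspace decomposition rather than try a direct Fourier-analytic argument. Proposition \ref{thm:shift} tells us $\Hphi^1\ominus\Kphi$ is $Z_1$-invariant, and since $Z_1$ is a pure isometry on $H^2$ it restricts to a pure isometry on $\Hphi^1\ominus\Kphi$; Proposition \ref{prop:sub1} identifies the associated wandering subspace as $\Kphi^1\ominus\Kphi$.

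For the first equality $\Hphi^1\ominus Z_1\Hphi^1 = \Kphi^1\ominus Z_1\Kphi$, applying the wandering subspace decomposition once gives
\[
\Hphi^1\ominus\Kphi = (\Kphi^1\ominus\Kphi) \oplus Z_1(\Hphi^1\ominus\Kphi).
\]
Adjoining $\Kphi$ and using $\Kphi\subset\Kphi^1$ yields $\Hphi^1 = \Kphi^1 \oplus Z_1(\Hphi^1\ominus\Kphi)$. A check from the definitions shows $Z_1\Kphi\subset\Kphi^1$, while $Z_1(\Hphi^1\ominus\Kphi)$ is orthogonal to $\Kphi^1$ by the wandering property, so I also obtain $Z_1\Hphi^1 = Z_1\Kphi \oplus Z_1(\Hphi^1\ominus\Kphi)$. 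Cancelling the common summand produces the desired equality.

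For the second equality $\Kphi^1\ominus Z_1\Kphi = \overline{P_{\Kphi^1}(L^2_{0+})}$ I argue both containments. For $(\supseteq)$, given $g\in L^2_{0+}$ and $k\in\Kphi$, the inclusion $Z_1 k \in Z_1\Kphi\subset\Kphi^1$ and self-adjointness of $P_{\Kphi^1}$ yield $\langle P_{\Kphi^1}(g), Z_1 k\rangle = \langle g, Z_1 k\rangle = 0$, since $Z_1 k \in Z_1 H^2\perp L^2_{0+}$. For $(\subseteq)$, if $f\in \Kphi^1\ominus Z_1\Kphi$ is orthogonal to $P_{\Kphi^1}(L^2_{0+})$, then using $f\in\Kphi^1$ gives $f\perp L^2_{0+}$ and hence $f\in Z_1 H^2$; the auxiliary identity $Z_1 H^2 \cap \Kphi^1 = Z_1\Kphi$, which follows because $Z_1$ is unitary and $\phi L^2_{--}$ is $Z_1$-reducing, then forces $f\in Z_1\Kphi$, which combined with $f\perp Z_1\Kphi$ gives $f=0$.

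The main obstacle is resisting the temptation to prove the first equality by directly attempting $\Hphi^1\ominus Z_1\Hphi^1 \subset \Kphi^1$ via Fourier coefficients. That route would require showing orthogonality of $\phi^* f$ to monomials $z_1^j z_2^k$ with $j\geq 1$, $k\leq -1$, and the natural witnesses $\phi z_1^{j-1}z_2^k$ that orthogonality to $Z_1\Hphi^1$ would exploit are not themselves in $\Hphi^1$ since they fail to lie in $H^2$. The wandering subspace viewpoint sidesteps this entirely and lets Proposition \ref{prop:sub1} do the heavy lifting.
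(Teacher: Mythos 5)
Your proof is correct, and the route is genuinely different from the paper's for the first equality. The paper shows $\Hphi^1\ominus Z_1\Hphi^1\subset\Kphi^1$ by invoking the reproducing-kernel identity $E^1=F^1+(1-z_1\bar w_1)G$ together with Proposition \ref{prop:sub1}, noting that all the kernel functions of $\Hphi^1\ominus Z_1\Hphi^1$ land in $\Kphi^1$, and then applying Lemma \ref{lem:orth} to upgrade the inclusion to the asserted equality. Your argument avoids the kernel machinery entirely: you take the one-step wandering decomposition $\Hphi^1\ominus\Kphi=(\Kphi^1\ominus\Kphi)\oplus Z_1(\Hphi^1\ominus\Kphi)$ furnished by Proposition \ref{prop:sub1}, adjoin $\Kphi$ to obtain $\Hphi^1=\Kphi^1\oplus Z_1(\Hphi^1\ominus\Kphi)$, shift the decomposition $\Hphi^1=\Kphi\oplus(\Hphi^1\ominus\Kphi)$ by $Z_1$ to get $Z_1\Hphi^1=Z_1\Kphi\oplus Z_1(\Hphi^1\ominus\Kphi)$, and cancel the common orthogonal summand. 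This is a pleasant purely Hilbert-space argument whose cost is one extra inclusion ($Z_1\Kphi\subset\Kphi^1$, already noted in the paper) and whose benefit is not needing the kernel identity at all. For the second equality, the paper computes $P_{\Kphi^1}((Z_1\Kphi)^\perp)$ and simplifies via Fourier-support decompositions; your two-inclusion argument lands in the same place and is arguably more transparent, hinging on the tidy facts that $H^2=L^2_{0+}\oplus Z_1H^2$ and $Z_1H^2\cap\Kphi^1=Z_1\Kphi$.

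One small slip in the justification, not in the conclusion: the identity $Z_1H^2\cap\Kphi^1=Z_1\Kphi$ does not follow because ``$\phi L^2_{--}$ is $Z_1$-reducing'' — in fact $\phi L^2_{--}$ is invariant under $Z_1^*$ but not under $Z_1$, so it is not $Z_1$-reducing. What you actually need is only that $Z_1$ is unitary on $L^2$ (hence injective and commutes with intersections) together with the definition $\Kphi^1=H^2\cap Z_1\phi L^2_{--}$ and the inclusion $Z_1H^2\subset H^2$: then $Z_1H^2\cap\Kphi^1=Z_1H^2\cap Z_1\phi L^2_{--}=Z_1(H^2\cap\phi L^2_{--})=Z_1\Kphi$.
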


\begin{proof}
Since 
\[
E^1(z,w)=F^1(z,w) + (1-z_1\bar{w}_1)G(z,w)
\]
and since $F^1_w v \in \Kphi^1$ for $w \in \D^2, v \in \mcV$, we see
that $E^1_wv \in \Kphi^1$. Hence, $\Hphi^1\ominus Z_1\Hphi^1 \subset
\Kphi^1$, so by Lemma \ref{lem:orth}
\[
\Hphi^1\ominus Z_1 \Hphi^1 = \Kphi^1 \ominus Z_1 \Kphi.
\]
We can also identify $\Kphi^1 \ominus Z_1 \Kphi$ with a ``closure of a
projection'' because
\[
\begin{aligned}
\Kphi^1 \ominus Z_1 \Kphi &=  P_{\Kphi^1}(( Z_1\Kphi)^{\perp})  \\
&=  P_{\Kphi^1}(  (L^2 \ominus Z_1H^2) 
\vee Z_1 \phi( L^2_{+ \bullet} \oplus L^2_{-+}) )\\
&=  \overline{ P_{\Kphi^1}( (L^2 \ominus Z_1H^2) 
+ Z_1 \phi( L^2_{+ \bullet} \oplus L^2_{-+}) )}\\
& =  \overline{ P_{\Kphi^1} (L^2_{0+})}.
\end{aligned}
\]
\end{proof}

The characterization in Theorem  \ref{thm:maxmin} implies that
the reproducing kernel Hilbert spaces associated to any Agler decomposition must also
sit inside either $\Kphi^1$ or $\Kphi^2.$

\begin{corollary} \label{cor:Agker} Let $(A^1,A^2)$ be Agler kernels of $\phi$. Then 
\[ \mcH(A^j) \text{ is contained contractively in } \Kphi^j \text{ for } j=1,2.
\]
\end{corollary}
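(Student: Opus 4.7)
The plan is to identify the reproducing kernel of $\Kphi^j$ explicitly and then check by a direct algebraic computation that it dominates $A^j$; Lemma~\ref{lem:poskernels} will then convert this kernel inequality into the desired contractive containment.

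First, since $\Kphi \subseteq \Kphi^j$ (recorded at the start of Section~4), there is an orthogonal decomposition $\Kphi^j = \Kphi \oplus (\Kphi^j \ominus \Kphi)$. By Proposition~\ref{prop:sub1}, the second summand coincides with the wandering subspace $(\Hphi^j \ominus \Kphi) \ominus Z_j(\Hphi^j \ominus \Kphi)$, whose reproducing kernel is $F^j$ by Notation~\ref{efgnotation}; since $\Kphi$ has reproducing kernel $G$, the reproducing kernel of $\Kphi^j$ is
\[
K^j(z,w) = F^j(z,w) + G(z,w).
\]

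Next, I would invoke Theorem~\ref{thm:maxmin} to write $A^j(z,w) = F^j(z,w) + (1-z_j\bar{w}_j)G^j(z,w)$, where $G^j \kgeq 0$ and $G = G^1 + G^2$. Subtracting gives
\[
K^j(z,w) - A^j(z,w) = G(z,w) - (1-z_j\bar{w}_j)G^j(z,w) = G^k(z,w) + z_j\bar{w}_j\, G^j(z,w),
\]
where $\{j,k\} = \{1,2\}$. The first term is positive semidefinite by Theorem~\ref{thm:maxmin}, and the second is positive semidefinite as the Schur product of the rank-one scalar positive semidefinite kernel $(z,w)\mapsto z_j\bar{w}_j$ with the operator-valued positive semidefinite kernel $G^j$. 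Hence $K^j \kgeq A^j \kgeq 0$, and Lemma~\ref{lem:poskernels} yields the contractive embedding $\mcH(A^j) \subseteq \mcH(K^j) = \Kphi^j$.

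I do not anticipate any real obstacle here: the only step with any content is pinning down the reproducing kernel of $\Kphi^j$ as $F^j + G$ via Proposition~\ref{prop:sub1}, after which the corollary reduces to a single-line algebraic identity together with the standard fact that Schur multiplication by a scalar positive semidefinite kernel preserves positivity of operator-valued kernels.
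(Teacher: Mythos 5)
Your proof is correct and follows essentially the same route as the paper: identify the reproducing kernel of $\Kphi^j$ as $F^j + G$ via Proposition~\ref{prop:sub1}, apply Theorem~\ref{thm:maxmin} to write $A^j = F^j + (1-z_j\bar w_j)G^j$, observe that $F^j + G - A^j = G^k + z_j\bar w_j G^j \kgeq 0$, and conclude by Lemma~\ref{lem:poskernels}. You merely make explicit two details the paper leaves implicit (the decomposition $\Kphi^j = \Kphi \oplus (\Kphi^j\ominus\Kphi)$ yielding the kernel $F^j+G$, and the Schur-product argument for $z_j\bar w_j G^j \kgeq 0$).
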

\begin{proof} By Theorem \ref{thm:maxmin}, for $j=1,2,$ we can write 
\[ 
A^j(z,w) = F^j(z,w) + (1-z_j\bar{w}_j)G^j(z,w),
\]
where each $G^j$ is positive semidefinite and $G = G^1 +G^2.$  From Proposition 
\ref{prop:sub1} and the definitions of $F^j$ and $G$, it is clear that 
$\Kphi^j$ has reproducing kernel $F^j +G.$ As
\[ 
F^1 + G - A^1 =  G^2 + z_1\bar{w}_1G^1 \kgeq 0,
\]
it follows from Lemma \ref{lem:poskernels}  that $\mcH(A^1)$ is contained
contractively in $\mcH(F^j +G)= \Kphi^1$ and similarly, $\mcH(A^2)$ is
in $\Kphi^2.$
\end{proof}

\section{Matrix inner functions and Theorem \ref{thm:regintro} on regularity}
For the rest of the paper we assume $\mcV = \C^N$ and therefore, that
$\phi$ is an $N\times N$ matrix valued inner function on $\D^2$.
Define
\[
\E = \C\setminus \cD,
\]
and then $\E^2$ will be what we call the exterior bidisk.

We now restate and prove Theorem \ref{thm:regintro}.
\begingroup
\def\thetheorem{\ref{thm:regintro}}
\begin{theorem}  
Let $\phi:\D^2 \to \mathcal{B}_1$ be a matrix valued inner function. 
Let $X$ be an open subset of $\T^2$ and let
\[
\begin{aligned}
X_1 &= \{ x_1 \in \T: \exists \ x_2 \in \T \text{ with }  (x_1,x_2) \in X \} \\
X_2 &= \{ x_2 \in \T: \exists \ x_1 \in \T \text{ with }  (x_1,x_2) \in X \}\\
S & = \{1/\bar{z}: \det \phi(z) = 0\}.
\end{aligned}
\]
Then the following are equivalent:   
\begin{itemize}
\item[$(i)$] The function $\phi$ extends continuously to $X.$
\item[$(ii)$] For some pair $(A^1,A^2)$ of Agler kernels of $\phi$,
 the elements of $\mcH(A^1)$, $\mcH(A^2)$ extend continuously to $X.$ 
\item[$(iii)$] There is a domain $\Omega$ containing
 \[ \D^2 \cup X \cup
  (X_1 \times \D) \cup (\D \times X_2) \cup (\E^2\setminus S)
\] 
on which $\phi$ and the elements of $\Kphi, \mcK^1_{\phi},
  \mcK^2_{\phi}$ extend to be analytic (and meromorphic on $\Omega
  \cup S$). Point evaluation in $\Omega$ is bounded in these spaces,
  and therefore, the kernels $G, F^1, F^2, E^1, E^2$ and all Agler
  kernels $(A^1,A^2)$ of $\phi$ extend to be sesqui-analytic on
  $\Omega\times \Omega$.
\end{itemize}
\end{theorem}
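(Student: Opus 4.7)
The implication $(iii)\Rightarrow(i)$ is immediate since $X\subset\Omega$. For $(iii)\Rightarrow(ii)$: Corollary \ref{cor:Agker} embeds $\mcH(A^j)$ contractively in $\Kphi^j$, so the analytic extensions of elements of $\Kphi^j$ to $\Omega$ restrict to give analytic (hence continuous) extensions of elements of $\mcH(A^j)$ to $X\subset\Omega$. For $(ii)\Rightarrow(i)$, I would rewrite the Agler identity \eqref{aglerkernels} as
\[
\phi(z)\phi(w)^* = I - (1-z_1\bar w_1)A^2(z,w) - (1-z_2\bar w_2)A^1(z,w).
\]
Continuity of the reproducing kernel functions $A^j(\cdot,w)v\in\mcH(A^j)$ on $X$ yields continuity of $z\mapsto A^j(z,w)$ there for each $w\in\D^2$, so the right side is continuous in $z\in X$ with $w\in\D^2$ fixed. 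Since $\det\phi$ is a nonzero scalar inner function on $\D^2$, there exists $w_0\in\D^2$ with $\phi(w_0)$ invertible, and the identity $\phi(z)=[\phi(z)\phi(w_0)^*]\,[\phi(w_0)^*]^{-1}$ shows $\phi$ is continuous on $X$.

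The heart of the theorem is $(i)\Rightarrow(iii)$, which I would split into two stages. First, \emph{matrix Schwarz reflection}: continuity of $\phi$ on $X$ combined with $\phi(z)\phi(z)^*=I$ a.e.\ on $\T^2$ forces $\phi(z)\phi(z)^*=I$ pointwise on $X$, so the reflected map
\[
\Phi(z):=\phi(1/\bar z)^{-*}
\]
is meromorphic on $\E^2$, holomorphic on $\E^2\setminus S$, and matches $\phi$ on $X$. A componentwise edge-of-the-wedge argument glues $\phi|_{\D^2}$ to $\Phi|_{\E^2\setminus S}$ across $X$, producing an analytic extension of $\phi$ to a $\C^2$-neighborhood $N_X$ of $X$, and hence to $\Omega_0:=\D^2\cup N_X\cup(\E^2\setminus S)$.

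Second, the \emph{slice extension to $X_1\times\D$} (the case $\D\times X_2$ being symmetric) is the step I expect to be the main obstacle. Fix $x_1\in X_1$, choose $x_2\in\T$ with $(x_1,x_2)\in X$, and pick open disks $U,V\subset\C$ with $(x_1,x_2)\in U\times V\subset N_X$. Then $\phi$ is analytic on the Hartogs cross
\[
((U\cap\D)\times\D)\cup(U\times(V\cap\D)),
\]
the first piece from analyticity on $\D^2$ and the second from the extension to $N_X$. The classical cross theorem applied to the bounded matrix-valued holomorphic function $\phi$ then yields an analytic extension to the full bidisk $U\times\D$, giving an open neighborhood of $\{x_1\}\times\D$. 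Varying $x_1\in X_1$ covers $X_1\times\D$.

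Finally, with $\phi$ analytic on $\Omega$ (meromorphic on $\Omega\cup S$), the formulas in Theorem \ref{thm:fundagler} realize $G,F^1,F^2,E^1,E^2$ as sesqui-analytic extensions to $\Omega\times\Omega$. Finiteness of $F^j(z,z)+G(z,z)$ at each $z\in\Omega$ gives bounded point evaluation on $\Kphi^j$, and hence analytic extension of its elements (in particular those of $\Kphi\subset\Kphi^j$) to $\Omega$. For any Agler kernels $(A^1,A^2)$, Theorem \ref{thm:maxmin} combined with Corollary \ref{cor:Agker} yields $A^j\kleq F^j+G$, so Lemma \ref{lem:poskernels} transfers bounded point evaluation and sesqui-analytic extension to $\mcH(A^j)$ and to $A^j$ itself.
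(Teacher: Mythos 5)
Your architecture ($(i)\Rightarrow(iii)\Rightarrow(ii)\Rightarrow(i)$), your $(ii)\Rightarrow(i)$ formula, and your Schwarz-reflection plus edge-of-the-wedge plus slice-extension scheme for extending $\phi$ across $X$, $X_1\times\D$, $\D\times X_2$, and $\E^2\setminus S$ all match the paper's approach (the paper uses Rudin's Theorem 4.9.1 rather than a cross theorem for the slice step, but these do the same job for the bounded function $\phi$).

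The genuine gap is the final paragraph. You assert that once $\phi$ is analytic on $\Omega$, ``the formulas in Theorem \ref{thm:fundagler} realize $G, F^1, F^2, E^1, E^2$ as sesqui-analytic extensions,'' and then you derive bounded point evaluation from finiteness of the extended kernels. This is backwards and does not work. Theorem \ref{thm:fundagler} only expresses the \emph{sum}
\[
\frac{1-\phi(z)\phi(w)^*}{(1-z_1\bar w_1)(1-z_2\bar w_2)}
= \frac{F^1(z,w)}{1-z_1\bar w_1} + \frac{F^2(z,w)}{1-z_2\bar w_2} + G(z,w),
\]
and knowing the left side extends sesqui-analytically tells you nothing about the individual summands. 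The kernels $G, F^j, E^j$ are reproducing kernels of spaces defined by Hilbert-space orthogonal decompositions, not by closed-form expressions in $\phi$, so there is no shortcut to their extension. The correct logical order (which the paper follows) is: first show that each \emph{element} $f$ of $\Kphi$, $\Kphi^1$, $\Kphi^2$ extends analytically to $\Omega$; then establish bounded point evaluation via the uniform boundedness principle (tracking the constants through Rudin's explicit integral formulas); and only then conclude that the reproducing kernels extend sesqui-analytically. Moreover, extending elements of $\Kphi$ is not a cosmetic issue: such an $f$ need only be an $H^2$ function, so its boundary trace on $X$ is merely $L^2$, not continuous, and your cross-theorem/continuous-edge-of-the-wedge toolkit does not apply to it. The paper works around this by writing $f = \phi\,\overline{Z_1Z_2 g}$ with $g\in H^2$, so that $f(z) = \tfrac{1}{z_1z_2}\phi(z)\overline{g(1/\bar z)}$ extends meromorphically to $\E^2$, and then invokes the \emph{distributional} edge-of-the-wedge theorem, which accepts $L^2$ two-sided limits. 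That entire circle of ideas --- extension of elements, the distributional version of edge-of-the-wedge, and the bounded-point-evaluation argument --- is the main technical content of the implication and is missing from your plan.
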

\addtocounter{theorem}{-1}
\endgroup

We will prove $(i) \Rightarrow (iii) \Rightarrow (ii) \Rightarrow (i).$ As most analysis
lies in proving $(i) \Rightarrow (iii)$, we consider that implication first.

\begin{claim} $\phi$ extends to be analytic in some domain
  $\Omega.$ \end{claim}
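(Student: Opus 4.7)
The plan is to build $\Omega$ in two stages: first extend $\phi$ to the exterior bidisk by Schwarz reflection, then glue in a two-dimensional neighborhood of $X$ using the continuity hypothesis $(i)$.

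For the reflection stage, for $w \in \E^2$ with $\det \phi(1/\bar w) \ne 0$, define
\[
\phi(w) := \bigl(\phi(1/\bar w)^*\bigr)^{-1}.
\]
Since $\phi$ is matrix valued and $\det\phi$ is a nontrivial scalar inner function on $\D^2$, this formula produces a holomorphic extension of $\phi$ to $\E^2 \setminus S$ and a meromorphic extension to all of $\E^2$. The identity $\phi(z)\phi(z)^* = I$ a.e.\ on $\T^2$ forces the non-tangential boundary values of this reflected $\phi$, approached from $\E^2$, to agree (away from $S$) with those of the original $\phi$ approached from $\D^2$.

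For the gluing stage, hypothesis $(i)$ supplies a continuous extension of $\phi$ to $\D^2 \cup X$. Since $\phi$ is unitary at almost every point of $\T^2$ and $X$ is open, continuity forces the extension to be unitary---hence invertible---at every point of $X$; in particular $X \cap S = \emptyset$, and the reflected $\phi$ from the first stage is analytic in $\E^2$ near $X$ with the same continuous boundary values on $X$ as the original. A matrix Schwarz reflection argument, applied entrywise (equivalently, the edge-of-the-wedge principle for the totally real, real-analytic submanifold $\T^2$ of $\C^2$ separating $\D^2$ from $\E^2$), then promotes continuous matching to analytic matching and yields a joint analytic extension of $\phi$ to an open neighborhood $W$ of $X$ in $\C^2$.

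Setting $\Omega := \D^2 \cup (\E^2 \setminus S) \cup W$ gives an open connected set (connectedness because $W$ is a two-sided neighborhood of $X$ meeting both $\D^2$ and $\E^2\setminus S$) on which $\phi$ is analytic, proving the claim. The key technical point---and the place where $(i)$ is really used---is upgrading continuous agreement across $X$ to analytic agreement, which rests on both the explicit reflection formula and the non-vanishing of $\det\phi$ on $X$. I expect the actual obstacle in finishing $(i)\Rightarrow(iii)$ to come later: enlarging this preliminary $\Omega$ to contain the slice regions $X_1 \times \D$ and $\D \times X_2$ cannot be accomplished by pure reflection, since for $z_1$ in the ``exterior lens'' $D(x_1,r)\cap\E$ and generic $z_2 \in \D$ the function $\phi(z_1,z_2)$ has no direct definition, and a Hartogs-type continuation from the neighborhood $W$ into the mixed region will be required.
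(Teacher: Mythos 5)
Your first stage matches the paper exactly: reflect $\phi$ across $\T^2$ by $\phi(z) = (\phi(1/\bar{z})^*)^{-1}$, observe it is meromorphic on $\E^2$ and holomorphic off $S$, note that continuity plus a.e.~unitarity forces $\phi$ to be unitary (hence invertible) at every point of $X$, and then invoke the continuous edge-of-the-wedge theorem (the paper cites Theorem A of Rudin \cite{wR71}) to get a full $\C^2$-neighborhood of $X$ on which $\phi$ is analytic. One small sharpening the paper makes and you gloss over: one needs $\phi$ to be \emph{invertible}, not merely defined, in a one-sided collar $W^+ \subset \D^2$ with $X$ in its closure, in order for the reflection formula to make sense on the exterior side $W^-$ before the edge-of-the-wedge theorem is applied; continuity and invertibility \emph{on} $X$ guarantee this by compactness/openness.

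The genuine gap is the slice regions, and you identify it yourself but then defer it: the $\Omega$ required by statement $(iii)$, and hence by this claim, must contain $(X_1 \times \D) \cup (\D \times X_2)$, and your $\Omega = \D^2 \cup (\E^2 \setminus S) \cup W$ does not. The paper closes this immediately, inside the proof of the claim, using Rudin's Theorem 4.9.1 from \emph{Function Theory in Polydiscs} (restated as Proposition \ref{prop:rudin2}): if a function holomorphic on $\D^2$ extends analytically to a neighborhood of a point $x = (x_1,x_2) \in \T^2$, then it extends analytically to open sets containing $\{x_1\} \times \D$ and $\D \times \{x_2\}$. Applying this to every $x \in X$ (with the edge-of-the-wedge neighborhoods $N_x$ as input) produces the open set $U \supset (X_1 \times \D) \cup (\D \times X_2)$, and $\Omega = \Omega_0 \cup U \cup \D^2 \cup (\E^2 \setminus S)$ is the domain in $(iii)$. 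So the ``Hartogs-type continuation'' you expected to need is precisely Rudin's propagation-of-regularity result, and it belongs here, not later; without it the claim as stated is not proved.
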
 
\begin{proof}
Suppose that $X$ is an open subset of $\T^2$ and $\phi$ extends to be
continuous on $ \D^2 \cup X$.  Then, $\phi$ is invertible in a
neighborhood $W^+ \subset \D^2$ with $X \subset \text{closure}(W^{+})$
(since $\phi$ is a unitary almost everywhere on $\T^2$).  The
following
\begin{equation} \label{phiextend}
\phi(z) = (\phi(1/\bar{z})^*)^{-1}
\end{equation}
gives a definition of $\phi$ on $W^{-} := \{1/\bar{z}: z \in
W^{+}\}$.
The extended $\phi$ is holomorphic on $W^{+}\cup W^{-}$ and continuous
on $W^{+} \cup X \cup W^{-}$ since $\phi$ is unitary valued on $X$.
By the continuous edge-of-the-wedge theorem (Theorem A of Rudin
\cite{wR71}), there is a domain $\Omega_0$ containing $W^{+}\cup X
\cup W^{-}$, which depends only on $X, W^{\pm}$, on which $\phi$
extends to be holomorphic.  Moreover, $\phi$ is already holomorphic on
$\D^2$, meromorphic in $\E^2$, and holomorphic away from the set $S$
using the definition \eqref{phiextend}.

We can extend this domain further using a result in Rudin \cite{wR69}
(Theorem 4.9.1, which we provide as Proposition \ref{prop:rudin2}
below).  It says, roughly, that if a holomorphic function $f$ on
$\D^2$ extends analytically to a neighborhood $N_x$ of some
$x=(x_1,x_2) \in \T^2,$ then $f$ extends analytically to an open set
containing $\{x_1\} \times \D$ and $\D \times \{x_2\}.$ As the
edge-of-the-wedge theorem guarantees $\phi$ extends to a neighborhood
$N_x$ of each $x \in X$, Proposition \ref{prop:rudin2} implies $\phi$
extends analytically to an open set $U$ containing $(X_1 \times \D )
\cup (\D \times X_2),$ and the open set depends only on the
$\{N_x\}_{x \in X}.$ This detail is contained in the \emph{proof} of
Proposition \ref{prop:rudin2}.
\end{proof}
\begin{claim} Elements of $\Kphi, \Kphi^1, \Kphi^2$ are analytic in
  $\Omega.$ \end{claim}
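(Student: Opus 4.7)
The plan is to handle $\Kphi, \Kphi^1, \Kphi^2$ by a common template: represent $f$ via $\phi$ and an anti-analytic factor $g$, extend $g$ across $\T^2$ by gluing its analytic expression on $\E^2$ to the interior expression $\phi^{-1}f$, and then read off the extension of $f$.

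For $f \in \Kphi$, the definition produces $g \in L^2_{--} \otimes \mcV$ with $f = \phi g$ on $\T^2$. The Fourier series
\[
g(z) = \sum_{j,k \ge 1} \hat g(-j,-k)\, z_1^{-j} z_2^{-k}
\]
converges on $\E^2$ and vanishes at infinity, so $g$ is automatically $\mcV$-valued analytic on $\E^2$. Using the meromorphic extension $\phi(z) = (\phi(1/\bar z)^*)^{-1}$ on $\E^2$ from the previous claim, set $\tilde f(z) := \phi(z) g(z)$; this is analytic on $\E^2 \setminus S$ and meromorphic on $\E^2$.

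The heart of the argument is continuing $f$ across $X$. For each $x \in X$ the previous claim supplies a neighborhood $N_x$ on which $\phi$ is analytic, and since $\phi(x)$ is unitary we may shrink $N_x$ so that $\phi^{-1}$ is also analytic on $N_x$. Define $G(z) := \phi(z)^{-1} f(z)$ on $N_x \cap \D^2$ (analytic by construction) and $G(z) := g(z)$ on $N_x \cap \E^2$ (also analytic). Both pieces carry the same $L^2$ boundary values on $X \cap N_x$: the $\D^2$-side yields $(\phi^b)^{-1} f^b = (\phi^b)^* f^b = g^b$, and the $\E^2$-side yields $g^b$ directly. The distributional edge-of-the-wedge theorem (see \cite{wR71}) then glues $G$ into a single analytic function on $N_x$; multiplying back by $\phi$ supplies the simultaneous analytic continuation of $f$ and $\tilde f$ across a neighborhood of $X$.

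Once $f$ extends across a neighborhood of every $x \in X$, Proposition \ref{prop:rudin2} upgrades this to an analytic extension on an open set containing $(X_1 \times \D) \cup (\D \times X_2)$; combined with $\tilde f$ on $\E^2 \setminus S$ and the original $f$ on $\D^2$, this fills out precisely the domain $\Omega$ produced for $\phi$ in the previous claim. The cases $\Kphi^j$ are cosmetic variations: for $f \in \Kphi^j$ write $f = Z_j \phi g$ with $g \in L^2_{--} \otimes \mcV$, set $\tilde f = Z_j \phi g$ on $\E^2$, and repeat the argument (the monomial $Z_j$ is analytic everywhere and nonvanishing on $\T^2$, so the gluing step is unaffected). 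The main obstacle is the edge-of-the-wedge step itself: since $f$ is only known to be $H^2$ rather than continuous up to $X$, one must invoke a distributional version of the theorem, and one must carefully identify the interior expression $\phi^{-1}f$ and the exterior expression $g$ as boundary values of the same analytic object — this identification is the crux.
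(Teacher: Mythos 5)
Your proof is correct and follows the same general strategy as the paper's: represent $f\in\Kphi$ as $f=\phi g$ with $g\in L^2_{--}$, use the automatic analyticity of $g$ on $\E^2$, invoke the distributional edge-of-the-wedge theorem across $X$, and then propagate along the faces $X_1\times\D$ and $\D\times X_2$ via Proposition~\ref{prop:rudin2}. The one genuine difference is where you apply edge-of-the-wedge: you apply it to $G=\phi^{-1}f$ (interior side) glued with $g$ (exterior side), and recover the extension of $f$ afterward by multiplying by $\phi$; the paper instead applies edge-of-the-wedge directly to $f$, with the exterior branch $f(z)=\tfrac{1}{z_1z_2}\phi(z)\overline{g(1/\bar z)}$, and verifies the hypothesis by checking explicitly that $f_r\to f$ in $L^2(X_0)$ both as $r\nearrow 1$ and as $r\searrow 1$ for every compact $X_0\subset X$. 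Your variant is valid and arguably conceptually cleaner (the exterior branch is just $g$, with no reference to $\phi$ at all), at the small cost of needing $\phi^{-1}$ to be analytic on $N_x\cap\D^2$, which requires shrinking $N_x$ using the unitarity of $\phi$ on $X$. The one place you are a bit terse is the verification that the two branches of $G$ have matching distributional boundary values: asserting that the ``$L^2$ boundary values'' agree is the right idea, but for the theorem as Rudin states it you should record, as the paper does, that the radial limits from either side actually converge in $L^2(X_0)$ for each compact $X_0\subset X$ (for the interior side this uses $f_r\to f$ in $L^2$ together with uniform convergence of $\phi(rz)^{-1}\to\phi(z)^{-1}$ on $X_0$; for the exterior side it is the standard $L^2$ convergence of the $L^2_{--}$ series). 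With that detail added, your argument closes.
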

\begin{proof}
Consider now $f \in \Kphi$; the proof is similar for the other
subspaces. Since $\phi^* f \in L^2_{--}$, we may write $f = \phi
\overline{Z_1 Z_2 g}$ for some $g \in H^2$.  This allows us to define
$f$ analytically outside of $\D^2$ as follows:
\[
f(z) = \frac{1}{z_1z_2} \phi(z) \overline{g(1/\bar{z})}
\]
for $z \in \E^2\setminus S$.  Note $f$ is meromorphic in $\E^2$.  With
this definition, for any compact subset $X_0 \subset X$ and $z \in
X_0$ 
\[
\lim_{r \searrow 1} f(rz) = \phi(z) \bar{z}_1\bar{z}_2 \overline{g(z)} = f(z)
\]
in $L^2(X_0)$ since as $r \searrow 1$
\[
g(1/r\bar{z}) \to g(z) 
\]
in $L^2(\T^2)$, while $\phi(rz) \to \phi(z)$ uniformly for $z \in X_0$
by the assumed continuity.

On the other hand, for $r \nearrow 1$, 
\[ f_r(z) = f(rz) \to f(z)
\] in
in $L^2(\T^2)$. Therefore, $f_r$ possesses two-sided limits in
$L^2(X_0)$ (i.e. for $r \searrow 1$ and $r \nearrow 1$) for any
compact subset $X_0 \subset X$.

The distributional edge-of-the-wedge theorem (Theorem B of Rudin
\cite{wR71}) now applies.  It requires that 
\[
\lim_{r \to 1} \int_{X} f_r(z) \psi(z) d\sigma(z)
\]
exist for every $\psi \in C^{\infty}_c(X)$.  The convergence of $f_r$
to $f$ in $L^2(X_0)$ on either side of any compact $X_0\subset X$ is
more than enough for this.  The conclusion of the edge-of-the-wedge
theorem is that $f$ has a holomorphic extension to a domain $\Omega_0$
containing $W^{+} \cup X \cup W^{-}$.  An important part of the
theorem is that the domain depends only on $W^{\pm}, X.$

Then, for each $x\in X,$ $f$ extends analytically to a neighborhood
$N_x$ of $x$, and so Proposition \ref{prop:rudin2} implies that $f$
extends analytically to an open set $U$ containing $X_1 \times \D$ and
$\D \times X_2$.  Again, from the proof of the theorem, it is clear
that $U$ depends only on the $\{N_x\}_{x \in X},$ which in turn
depended only on $W^{\pm}, X.$

As $f$ is already holomorphic
in $\D^2\cup (\E^2\setminus S)$ we may conclude that every $f \in
\Kphi$ is holomorphic in an open set 
\[
\Omega = \Omega_0 \cup U \cup \D^2 \cup (\E^2\setminus S)
\]
and meromorphic in $\Omega' = \Omega_0\cup U \cup \D^2 \cup \E^2$. 
\end{proof}

\begin{claim} Points of $\Omega$ are bounded point evaluations
  for $\Kphi, \Kphi^1, \Kphi^2$. \end{claim}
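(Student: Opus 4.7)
The plan is to apply the closed graph theorem to the extension map $f \mapsto \tilde f$ from $\Kphi$ into the Fr\'echet space $\mathcal{O}(\Omega, \mcV)$ of $\mcV$-valued holomorphic functions on $\Omega$, equipped with the topology of uniform convergence on compacta. The arguments for $\Kphi^1$ and $\Kphi^2$ are analogous.

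First I would dispatch the interior cases where the bound is explicit. For $z \in \D^2$, boundedness is automatic since $\Kphi$ is a closed subspace of the $\mcV$-valued Hardy space and inherits a reproducing kernel. For $z \in \E^2 \setminus S$, the explicit extension produced in the previous claim,
\[
\tilde f(z) = \frac{\phi(z)}{z_1 z_2}\,\overline{g(1/\bar z)}, \qquad f = \phi\,\overline{Z_1 Z_2 g},\ g \in H^2,
\]
factors the functional $f \mapsto \tilde f(z)$ as the composition of the isometry $f \mapsto g$ (by unitarity of $\phi$ a.e. on $\T^2$), the bounded $H^2$-point evaluation $g \mapsto g(1/\bar z)$ at the interior point $1/\bar z \in \D^2$, and left multiplication by the finite matrix $(z_1 z_2)^{-1}\phi(z)$, which is well-defined because $z \notin S$.

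The main obstacle is $z$ in the remaining wedge region $\Omega \setminus (\D^2 \cup (\E^2 \setminus S))$, where the previous claim only supplied a pointwise extension of each individual $f \in \Kphi$ rather than a uniform one. I would handle this by verifying that the extension map $f \mapsto \tilde f$ has closed graph. Suppose $f_n \to f$ in $\Kphi$ and $\tilde f_n \to h$ in $\mathcal{O}(\Omega,\mcV)$. The reproducing-kernel bound on $\D^2$ shows $\tilde f_n(z) \to \tilde f(z)$ for every $z \in \D^2$, hence $h$ and $\tilde f$ agree on $\D^2$. Since both are holomorphic on the connected open set $\Omega$ and agree on the nonempty open subset $\D^2$, the identity principle forces $h = \tilde f$ throughout $\Omega$. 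The closed graph theorem therefore yields continuity of $f \mapsto \tilde f$ from $\Kphi$ (Hilbert) to $\mathcal{O}(\Omega,\mcV)$ (Fr\'echet). Post-composing with the continuous evaluation functional at any $z \in \Omega$ on $\mathcal{O}(\Omega,\mcV)$ gives boundedness of $f \mapsto \tilde f(z)$ on $\Kphi$, and by sesqui-analytic dependence of the reproducing kernel on the bounded point-evaluation functionals, the kernels $G$, $F^j$, $E^j$, and any Agler kernel $A^j$ extend sesqui-analytically to $\Omega \times \Omega$.
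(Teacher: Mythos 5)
Your proof is correct and takes a genuinely different route from the paper's. Where you invoke the closed graph theorem on the extension map $T\colon f\mapsto\tilde f$ from $\Kphi$ (Hilbert, hence an $F$-space) into the Fr\'echet space $\mathcal{O}(\Omega,\mcV)$, the paper instead proceeds locally: it first notes that the set $B$ of bounded point evaluations is relatively closed in $\Omega$ (giving $X\subset B$), and then, for the remaining wedge and Rudin-type extension regions, it opens up the edge-of-the-wedge machinery (Proposition \ref{prop:rudin}) and Rudin's singular point result (Proposition \ref{prop:rudin2}) to exhibit, for each target point $z_0$, a compact set $K$ in the domain of analyticity with $|f(z_0)|\le\sup_K|f|$ for every $f$, and then applies the uniform boundedness principle on $K$. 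Both routes ultimately rest on Baire category; yours packages the category input once, via CGT, and needs only that (a) $\Omega$ is a connected domain containing $\D^2$ to which every $f\in\Kphi$ extends holomorphically (the previous claim) and (b) $H^2$ point evaluations on $\D^2$ are bounded, since these alone give the closed graph. The paper's argument is more hands-on and has the mild virtue of exhibiting the dependence of the bound at $z_0$ on a specific compact $K$ — it exposes the geometry behind the bound — but it requires tracking the compact sets through the edge-of-the-wedge integral formula and through Rudin's construction, which your argument bypasses entirely. One small stylistic note: in your factorization at $z\in\E^2\setminus S$, the map $f\mapsto g$ (with $g=\overline{Z_1Z_2\,\phi^*f}$) and the conjugation $g(1/\bar z)\mapsto\overline{g(1/\bar z)}$ are each anti-linear, so it is their composition that is linear; the norm estimate is unaffected, but it is worth saying since you assert the overall functional is a composition of bounded linear pieces.
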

\begin{proof}
Again, we consider only $\Kphi$.  Let $B$ be the set of bounded point
evaluations of $\Kphi$ in $\Omega$.  It is clear that $\D^2 \subset B$
since points of $\D^2$ are bounded point evaluations of all of $H^2$. 
Also, $\E^2\setminus S \subset B$ by the definition of exterior values
of functions in $\Kphi$.  As a first step, we show that $B$ is
relatively closed in $\Omega$ and this will in particular show that $X
\subset B$.

Suppose $\{w^j\} \subset B$ and $w^j \to w \in \Omega$.  For each $f
 \in \Kphi$,
\[
\sup \{ |f(w^j)|: j \geq 0\} < \infty.
\]
 By the uniform boundedness principle, there is a constant $M$ such
 that
\[
|f(w^j)| =  | \langle f, G_{w^j} \rangle_{\Kphi} | \le M \|f
\|_{\Kphi}.
\]
for all $f \in \Kphi$ and $ j \ge 0.$ As each $f$ is holomorphic in
$\Omega$, $f(w^j) \rightarrow f(w)$ and so
\[
 |f(w) | \le M \|f \|_{\Kphi}.
\] 
Hence, $w \in B$ and  $B$ is a relatively closed subset of $\Omega$.

To show $B$ contains $\Omega_0$ we need to refer to the local
construction of $\Omega_0$ as in the continuous edge-of-the-wedge
theorem as proved in Rudin \cite{wR71}.  Refer to Proposition
\ref{prop:rudin} below.  Modulo rescaling and a change of coordinates,
the main point is that around any point $x \in X$, any $f \in \Kphi$
is extended to a neighborhood $N_x$ of $x$ in $\C^2$ via an integral
formula which only depends on the values of $f$ in a compact subset $K
\subset W^{+}\cup X \cup W^{-}$.  Now, every $f\in \Kphi$ is analytic
in a neighborhood of such a $K$ and so for all $f\in \Kphi$
\[
\sup \{|f(w)|: w \in K\} < \infty.
\]
By the uniform boundedness principle, there is a constant $M$ such
that for all $w\in K$ and $f \in \Kphi$
\[
|f(w)| \leq M \|f\|_{\Kphi}.
\]
Because of this, the values of any $f$ in $N_x$ are controlled by
$f$'s values in $K$ and hence by $M$ and $\|f\|_{\Kphi}$. Thus, 
the points of $\Omega_0$ (as constructed in the proof
of the edge-of-the-wedge theorem) are bounded point evaluations of
$\Kphi$. 

Now consider the points of $U,$ the set guaranteed by Proposition
\ref{prop:rudin2}. The set $U$ is constructed as a union of
neighborhoods of the points in $X_1 \times \D$ and $\D \times X_2$ as
follows:
\[
U =\bigcup_{z \in X_1 \times \D} N_z \cup \bigcup_{w \in \D \times X_2} N_w.
 \]
Specifically, fix $z=(x_1,z_2) \in X_1 \times \D$. Then, there is an
$x_2$ such that $(x_1,x_2) \in X$ and a neighborhood $N_x$ of $x$
(guaranteed by the edge-of-the-wedge theorem) such that each $f \in
\Kphi$ extends analytically to $N_x.$ Then, Proposition
\ref{prop:rudin2} guarantees a neighborhood $N_z$ of $z$ to which each
$f$ extends analytically.  It follows by the construction in the proof
that there is a compact set $K$ contained in $\D^2 \cup N_x$ such that
for all $z_0 \in N_z$ and $ f \in \Kphi$
\[
|f(z_0)| \le \sup_{w \in K} |f(w)|.
\]
We can again use the uniform boundedness principle to  
conclude that the points in $N_z$ are also bounded
point evaluations of $\Kphi.$  Note that
$\Omega$ is constructed in the proof essentially as
\[
\D^2\cup(\E^2\setminus S) \cup \bigcup_{x \in X} N_x  \cup
\bigcup_{z \in X_1 \times \D} N_z \cup \bigcup_{w \in \D \times X_2} N_w ,
\]
and so we have proven that the points of $\Omega$ are bounded
point evaluations of $\Kphi.$

 Finally, the reproducing kernel $G(z,w)$ can now be extended to be
 sesqui-analytic in $\Omega$. Similarly, the reproducing kernels of $\Kphi^1$ 
and $\Kphi^2$ can be extended to $\Omega \times \Omega,$ which implies
$F^1, F^2, E^1,E^2$ extend. Let $(A^1,A^2)$ be 
Agler kernels of $\phi$. By Corollary \ref{cor:Agker}, the points of 
$\Omega$ are bounded point evaluations of $\mcH(A^1)$ and 
$\mcH(A^2),$ and hence, $(A^1,A^2)$ extend to be sesqui-analytic in 
$\Omega$ as well.
\end{proof}

That concludes the proof of $(i) \Rightarrow (iii),$ and 
it is immediate that $(iii) \Rightarrow (ii)$. Now consider $(ii) \Rightarrow (i):$

\begin{proof}
Let $(A^1,A^2)$ be Agler kernels of $\phi$ such that the elements of 
$\mcH(A^1)$ and $\mcH(A^2)$ extend continuously to $X$. By definition,
\[
1-\phi(z)\phi(w)^* = (1-z_1\bar{w}_1)A^2(z,w) + (1-z_2\bar{w}_2)
A^1(z,w),
\]
for all $z,w \in \D^2.$ Since $\phi$ is an isometry almost everywhere on $\T^2$, we can
choose $w \in \D^2$ such that $\phi(w)$ is invertible.  (Recall that
we are assuming $\mathcal{V}$ is finite dimensional, so that $\phi$
converges to its boundary values radially almost everywhere.)  As
$A^1_{w}\nu \in \mcH(A^1)$ and $A^2_{w}\nu \in \mcH(A^2)$ 
both extend to be continuous on $X$ for all $\nu \in \mcV,$ so does
\[
\phi(z) = \left(-(1-z_1\bar{w}_1)A^2_w(z) - (1-z_2\bar{w}_2)
A^1_w(z)+1\right)(\phi(w)^*)^{-1}.
\]
\end{proof}

When showing $(i) \Rightarrow (iii)$, we made reference to the following 
construction of Rudin. Notice in particular that the integral formula for $F$ 
depends only on the values of $f$ on the ``wedge.''

\begin{prop} \label{prop:rudin}  \cite{wR71}*{pg 10} 
Let $E$ and $V$ be open cubes in $\mathbb{R}^2$ defined as follows:
\[
\begin{aligned}
E &=& \{ x : -6 <x_j<6 \text{ for } 1 \le j \le 2 \} \\ V &=& \{ y : 0
< y_j < 6 \ \text{ for } 1 \le j \le 2 \}.
\end{aligned}
\]
Define $R^+:=E +iV$ and $R^- := E-iV$. Assume $f$ is a function
continuous on $R^+ \cup E \cup R^-$ and holomorphic on $R^+ \cup
R^-$. Then, there exists a function $F$ holomorphic on $\D^2$ with $F
\equiv f$ on $\D^2 \cap (R^+ \cup E \cup R^-).$ Define 
\[ \psi(s, t)
:= \frac{s + \tfrac{ t}{c}}{1 +c s t}, \ \ \text{ where } c =
\sqrt{2}-1.\]
 Then $F$ is given by
\[
 F(\lambda) = \frac{1}{2 \pi} \int_{-\pi}^{\pi} f \big(
\psi(\lambda_1, e^{i \theta}), \psi(\lambda_2, e^{i\theta}) \big) \ d
\theta \ \ \text{ for } \lambda \in \D^2,
\]
 and for each pair $(\lambda, \theta)$, the point $\big(
 \psi(\lambda_1, e^{i \theta}), \psi(\lambda_2, e^{i\theta}) \big)$ is
 in $R^+ \cup E \cup R^-.$

\end{prop}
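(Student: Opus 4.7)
The plan is to show that the integral formula defines a function $F$ holomorphic on $\D^2$ which agrees with $f$ on each of the three strata $\D^2 \cap R^+$, $\D^2 \cap E$, $\D^2 \cap R^-$. The key input is an analysis of the M\"obius transformation $\psi(\lambda, t)$. First I would verify: (i) for $|\lambda| < 1$ and $|t| \leq 1$ the denominator $1 + c\lambda t$ is bounded below by $1 - c > 0$, so $\psi(\lambda, t)$ is well-defined and satisfies $\psi(\lambda, 0) = \lambda$; (ii) the elementary bound
\[
|\psi(\lambda, t)| \leq \frac{|\lambda| + 1/c}{1 - c|\lambda|} < \frac{1 + 1/c}{1 - c} = 3 + 2\sqrt{2} < 6,
\]
using $c = \sqrt{2} - 1$; and (iii) the identity
\[
\operatorname{Im}(\psi(\lambda, t)) \cdot |1 + c\lambda t|^2 = \operatorname{Im}(\lambda)(1 - |t|^2) + \operatorname{Im}(t)(1/c - c|\lambda|^2),
\]
which at $|t|=1$ reduces to $\sin\theta \cdot (1/c - c|\lambda|^2)$ and has the sign of $\sin\theta$ whenever $|\lambda|<1$. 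Together these show that $(\psi(\lambda_1, e^{i\theta}), \psi(\lambda_2, e^{i\theta}))$ lies in $R^+$ when $\sin\theta > 0$, in $R^-$ when $\sin\theta < 0$, and in $E$ when $\sin\theta = 0$, so the integrand of the formula is always well-defined. Holomorphy of $F$ on $\D^2$ is then a standard parameter-integral argument: the integrand is jointly continuous on $\D^2 \times [-\pi, \pi]$ and, for each $\theta \in (-\pi, 0) \cup (0, \pi)$, is holomorphic in $\lambda$ as the composition of the holomorphic map $\lambda \mapsto (\psi(\lambda_1, e^{i\theta}), \psi(\lambda_2, e^{i\theta}))$ with $f$ (holomorphic on $R^\pm$).

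To show $F = f$ on $\D^2 \cap E$, fix real $\lambda \in (-1,1)^2$ and set $h(t) = f(\psi(\lambda_1, t), \psi(\lambda_2, t))$. For real $\lambda$ the imaginary-part formula simplifies to $\operatorname{Im}(\psi(\lambda_j, t)) \cdot |1 + c\lambda_j t|^2 = \operatorname{Im}(t)(1/c - c\lambda_j^2)$, so $\operatorname{Im}(\psi(\lambda_j, t))$ has the same sign as $\operatorname{Im}(t)$ throughout $\cD$. Hence $(\psi_1(t), \psi_2(t))$ lies in $R^+$ (resp.\ $R^-$, $E$) when $\operatorname{Im}(t)$ is positive (resp.\ negative, zero), so $h$ is continuous on $\cD$, holomorphic on $\D \setminus \mathbb{R}$, and hence holomorphic on all of $\D$ by Morera's theorem applied to triangles straddling the real diameter. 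The mean-value property then gives
\[
F(\lambda) = \frac{1}{2\pi}\int_{-\pi}^{\pi} h(e^{i\theta})\,d\theta = h(0) = f(\lambda).
\]

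To extend this identity to $\D^2 \cap R^+$ (and symmetrically to $\D^2 \cap R^-$), fix $\lambda = (a_1 + ib_1, a_2 + ib_2) \in \D^2 \cap R^+$ and consider the affine curve $\sigma(t) = (a_1 + itb_1, a_2 + itb_2)$. The set $U = \{t \in \C : \sigma(t) \in \D^2\}$ is the intersection of two open discs in the $t$-plane centered on the imaginary axis, hence open, convex, connected, and containing both $0$ and $1$. Since $\operatorname{Im}(\sigma_j(t)) = b_j\operatorname{Re}(t)$ with $b_j > 0$, $U$ splits as $U_+ \sqcup U_0 \sqcup U_-$ according to the sign of $\operatorname{Re}(t)$, with $\sigma$ sending these pieces respectively into $\D^2 \cap R^+$, $\D^2 \cap E$, $\D^2 \cap R^-$; each piece is nonempty. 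Define
\[
\tilde g(t) = \begin{cases} F(\sigma(t)) - f(\sigma(t)), & t \in U_+, \\ 0, & t \in U_0 \cup U_-. \end{cases}
\]
The main technical observation is that this extension-by-zero $\tilde g$ is continuous across the imaginary segment $U_0$ (the limit from $U_+$ is $0$ by the preceding paragraph, and $\tilde g \equiv 0$ on $U_-$) and holomorphic separately on the open sets $U_+$ and $U_-$; by Morera's theorem applied across $U_0$, it is holomorphic on all of $U$. Since $\tilde g$ vanishes on the nonempty open set $U_-$, the identity theorem on the connected $U$ forces $\tilde g \equiv 0$, and evaluating at $t = 1 \in U_+$ yields $F(\lambda) = f(\lambda)$. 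The analogous construction with $b_j < 0$ handles $\D^2 \cap R^-$.
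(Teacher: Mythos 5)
The paper does not prove this proposition; it is quoted verbatim from Rudin's lecture notes (the citation \cite{wR71}*{pg 10} points to the source, and the paper only uses the construction, not a reproved statement). So there is no internal proof to compare against, and your proposal is a from-scratch reconstruction of the argument in Rudin's continuous edge-of-the-wedge theorem for the bidisk.

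That said, your proof is correct and complete. The three preliminary calculations are right: $|1+c\lambda t|\geq 1-c>0$; the modulus bound gives $3+2\sqrt{2}<6$; and the identity $\operatorname{Im}\psi(\lambda,t)\,|1+c\lambda t|^2=\operatorname{Im}(\lambda)(1-|t|^2)+\operatorname{Im}(t)(1/c-c|\lambda|^2)$ does follow from expanding $(\lambda+t/c)\overline{(1+c\lambda t)}$, and since $1/c-c|\lambda|^2\geq 2>0$ on $\D$, the sign of $\operatorname{Im}\psi(\lambda,e^{i\theta})$ is that of $\sin\theta$. This shows the integrand takes values in $R^+\cup E\cup R^-$ and establishes holomorphy of $F$ on $\D^2$. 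The mean-value argument on the totally real slice $\D^2\cap E$ is exactly Rudin's, and it is the right mechanism: for real $\lambda$ the curve $t\mapsto(\psi(\lambda_1,t),\psi(\lambda_2,t))$ maps $\cD$ into the closed wedge so that $h$ is continuous on $\cD$, holomorphic off $\mathbb{R}$, hence holomorphic, and $F(\lambda)=h(0)=f(\lambda)$.

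The affine-curve continuation for $\D^2\cap R^{\pm}$ is a genuinely nice way to avoid the subtlety that $\D^2\cap E$ is only a two-real-dimensional (totally real) edge inside a four-real-dimensional domain. The set $U=\{t:\sigma(t)\in\D^2\}$ is indeed an intersection of two discs $\{|t-ia_j/b_j|<1/b_j\}$, hence a convex domain containing $0$ and $1$, and your one-variable Morera plus identity-theorem argument on $U$ is sound: $\tilde g$ is continuous on $U$ (the two-sided limits at $U_0$ both equal $0$, using continuity of $f$ up to the edge and $F=f$ on the edge), holomorphic off the imaginary segment, therefore entire on $U$ and identically $0$. Rudin's own passage from the edge to the wedge can be phrased slightly differently, but the substance is the same. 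In short: the proposition has no proof in the paper, and your reconstruction is a correct proof of the cited result.
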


For convenience we recount the following definitions and proposition
from Rudin \cite{wR69}*{pg 97-99}, which were used above.

A boundary point $p$ of $\D^2$ is \emph{regular point} for a
holomorphic $f:\D^2\to \C$ if there is a neighborhood $N_p$ of $p$
where $f|_{N_p\cap \D^2}$ extends to be holomorphic on $N_p$.
Otherwise, $p$ is a \emph{singular point} of $f$.

\begin{prop} \label{prop:rudin2} \cite{wR69}*{Theorem 4.9.1 pg 98}
If $f$ is holomorphic in $\D^2$, $\beta \in \D$, and $(1,\beta)$ is a
singular point of $f$, then $(1,\eta)$ is a singular point of $f$ for
every $\eta \in \T$.
\end{prop}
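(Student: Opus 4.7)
We prove the contrapositive: if $(1,\eta_0)$ is a regular point of $f$ for some $\eta_0\in \mathbb{T}$, then $(1,\beta)$ is a regular point of $f$ for every $\beta\in\mathbb{D}$. So assume $f$ extends holomorphically from $\mathbb{D}^2$ to a product neighborhood $N = D(1,r)\times D(\eta_0,r)$ of $(1,\eta_0)$, and hence to $\Omega_0 := \mathbb{D}^2 \cup N$.

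The first step is a slice-extension observation. For each $\zeta \in D(\eta_0,r)\cap \mathbb{D}$, the one-variable function $z_1 \mapsto f(z_1,\zeta)$ is holomorphic on $\mathbb{D}$ (from $\mathbb{D}^2$) and on $D(1,r)$ (from $N$), and the two agree on the nonempty overlap $\mathbb{D}\cap D(1,r)$; by analytic continuation the slice extends holomorphically to $\mathbb{D} \cup D(1,r)$. Symmetrically, for each $w \in \mathbb{D}\cap D(1,r)$, the slice $z_2 \mapsto f(w,z_2)$ extends from $\mathbb{D}$ to $\mathbb{D} \cup D(\eta_0,r)$.

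Now fix $\beta \in \mathbb{D}$ and choose $\rho$ with $|\beta|<\rho<1$. For $z_1 \in \mathbb{D}\cap D(1,r)$, the slice $f(z_1,\cdot)$ is holomorphic on $\mathbb{D} \cup D(\eta_0,r)$, so within this enlarged domain one can deform the standard Cauchy circle $\{|w|=\rho\}$ to a closed contour $\gamma$ that bulges outward across $\mathbb{T}$ near $\eta_0$ and passes through $D(\eta_0,r)\setminus \overline{\mathbb{D}}$. Then
\[
\widetilde{f}(z_1,z_2) := \frac{1}{2\pi i}\oint_\gamma \frac{f(z_1,w)}{w-z_2}\,dw = f(z_1,z_2)
\]
for such $z_1$ and for $z_2$ inside $\gamma$ (in particular $z_2 = \beta$). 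This is the candidate extension of $f$ across $\mathbb{T}$ in the first coordinate.

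The main obstacle is to justify $\widetilde{f}$ as a holomorphic function of $z_1$ on all of $D(1,r)$, not merely on $\mathbb{D}\cap D(1,r)$: for $z_1 \in D(1,r)\setminus\overline{\mathbb{D}}$, the integrand $f(z_1,w)$ is directly defined only for $w \in D(\eta_0,r)$, while $\gamma$ has an arc in $\mathbb{D}\setminus D(\eta_0,r)$. My plan to resolve this is an iteration: first use the above Cauchy formula to extend $f$ a controlled amount past $\mathbb{T}$ in a neighborhood of $\eta_0$; then apply the slice-extension observation at new points of $\mathbb{T}$ that have just become regular to widen the ``window'' through which $\gamma$ can bulge; then redeform $\gamma$ and repeat. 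Using that $\overline{D(0,\rho)}\subset\mathbb{D}$ is compact, finitely many iterations should suffice to enclose any target $\beta$ by an admissible $\gamma$, and uniqueness of analytic continuation along the connected set $\mathbb{D}^2$ patches the successive local extensions into a single holomorphic function on a neighborhood of $\{1\}\times\mathbb{D}$, showing that $(1,\beta)$ is regular for every $\beta\in\mathbb{D}$.
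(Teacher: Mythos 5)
The iteration in your last paragraph does not make progress, and I don't see how to repair it within a purely one-variable contour framework. The Cauchy integral $\widetilde f(z_1,z_2)=\tfrac{1}{2\pi i}\oint_\gamma \frac{f(z_1,w)}{w-z_2}\,dw$ is defined only for those $z_1$ such that $f(z_1,\cdot)$ is holomorphic on all of $\gamma$. Since $\gamma$ must enclose $\beta$, it necessarily contains an arc of $\D$ far from $\eta_0$, where $f(z_1,\cdot)$ is known only for $z_1\in\D$; the bulge into $D(\eta_0,r)\setminus\overline{\D}$ forces $z_1\in D(1,r)$ as well. So $\widetilde f$ is defined exactly for $z_1\in\D\cap D(1,r)$ and $z_2$ in the interior of $\gamma$, and that interior lies inside $\D\cup D(\eta_0,r)$ because this union of two overlapping discs is simply connected. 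Every such $(z_1,z_2)$ already belongs to $\D^2\cup N$, and there Cauchy's theorem gives $\widetilde f=f$. Nothing is extended: there are no ``new points of $\T$ that have just become regular,'' the window through which $\gamma$ can bulge does not widen, and the loop is stuck at step zero. The Cauchy formula here is a restatement rather than a tool --- continuing $\widetilde f(\cdot,\beta)$ holomorphically past $z_1=1$ is exactly the conclusion you want.

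For context, the paper offers no proof of this proposition; it cites Rudin \cite{wR69}. Arguments that succeed here are genuinely two-variable. One route is the continuity principle (Kontinuit\"atssatz): build a family of analytic discs $\phi_t:\overline{\D}\to\C^2$ whose boundaries $\phi_t(\T)$ all lie in $\D^2\cup N$, with $\phi_0(\overline{\D})\subset\D^2\cup N$ and with $\phi_1(\overline{\D})$ passing through $(1,\beta)$; the compact set $K=\phi_0(\overline{\D})\cup\bigcup_t\phi_t(\T)$ is then precisely the one referred to in the remark after the proposition. Another route is the Hartogs lemma for subharmonic functions: expand $f(z_1,z_2)=\sum_n a_n(z_2)(z_1-z_1^0)^n$ about a basepoint $z_1^0\in\D$ near $1$ and observe that $\limsup_n \tfrac{1}{n}\log|a_n(z_2)|$ is strictly smaller for $z_2$ in the arc near $\eta_0$; the two-constants theorem then forces a strict gain at every $\beta\in\D$, so each slice series converges past $z_1=1$, uniformly on compacts. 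Either device supplies the two-variable ingredient that a fixed one-variable Cauchy contour cannot.
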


The contrapositive implies that if $f$ is regular at $(1,1)$, then $f$
is regular at $(1,\beta)$ for each $\beta \in \D$.  It can be seen
from the proof in \cite{wR69} that if $f$ is holomorphic in a
neighborhood $N$ of $(1,1)$, then for each $\beta \in \D$ there is a
compact set $K \subset \D^2 \cup N$ (depending only on $N$, $\beta$
and not $f$) such that $|f(1,\beta)| \leq \max_K |f|$. In other words,
$(1,\beta)$ is in the holomorphically convex hull of $K$ in $\D^2 \cup
N$.

\section{Theorem \ref{thm:restrict} on restriction maps}

\begingroup
\def\thetheorem{\ref{thm:restrict}}
\begin{theorem} 
Let $\phi$ be a finite dimensional matrix valued inner function on
$\D^2$.  For almost every $t \in \T$, the map
\[
f \mapsto f(t,\cdot)
\]
embeds $\Kphi^1 \ominus Z_1 \Kphi$ and $\Kphi^1 \ominus \Kphi$
isometrically into $H^2(\T) \ominus \phi(t,\cdot) H^2(\T)$.
\end{theorem}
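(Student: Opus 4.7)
The plan is to exploit the wandering-subspace structure to force the slice map to be inner-product preserving. By Propositions~\ref{prop:sub1} and~\ref{prop:sub2}, $\mathcal{W}_1 := \Kphi^1 \ominus Z_1\Kphi$ and $\mathcal{W}_2 := \Kphi^1 \ominus \Kphi$ are exactly the wandering subspaces for multiplication by $Z_1$ acting isometrically on the $Z_1$-invariant spaces $\Hphi^1$ and $\Hphi^1\ominus\Kphi$, respectively.

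First I would verify that slices land in the correct one-variable model space. Take $f \in \Kphi^1 = H^2 \cap Z_1\phi L^2_{--}$; then on $\T^2$ one has $\phi^* f = Z_1 h$ for some $h \in L^2_{--}$. For almost every $t\in\T$, the slice $\phi(t,\cdot)$ is a matrix-valued inner function on $\D$ and $h(t,\cdot)\in L^2_-(\T)$, so $\phi(t,\cdot)^* f(t,\cdot) = t\,h(t,\cdot) \in L^2_-(\T)$. Combined with $f(t,\cdot)\in H^2(\T,\mcV)$, this yields $f(t,\cdot) \in H^2(\T)\cap\phi(t,\cdot)L^2_-(\T) = H^2(\T)\ominus\phi(t,\cdot)H^2(\T)$.

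For the isometry, let $\mathcal{W}$ denote either $\mathcal{W}_1$ or $\mathcal{W}_2$ and fix $f,g \in \mathcal{W}$. The wandering property yields
\[
\langle Z_1^j f,\, Z_1^k g\rangle_{H^2(\T^2,\mcV)} = \delta_{jk}\langle f,g\rangle, \qquad j,k\geq 0.
\]
Setting $h_{f,g}(z_1) := \langle f(z_1,\cdot), g(z_1,\cdot)\rangle_{L^2(\T,\mcV)} \in L^1(\T)$, Fubini rewrites the left-hand side as the Fourier coefficient $\widehat{h_{f,g}}(k-j)$. Since $j-k$ sweeps out every nonzero integer as $j,k\geq0$ vary with $j\neq k$, all nonzero Fourier coefficients of $h_{f,g}$ vanish, while $\widehat{h_{f,g}}(0)=\langle f,g\rangle$. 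Hence $h_{f,g}$ equals the constant $\langle f,g\rangle$ almost everywhere, and the slice map preserves inner products on a set of full measure depending a priori on the pair $(f,g)$.

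To upgrade this to a single full-measure set $E\subset\T$ on which the slice map is isometric as a map of the whole space, I would pick a countable orthonormal basis $\{e_n\}$ of $\mathcal{W}$ and intersect the countably many full-measure sets on which $\langle e_n(t,\cdot), e_m(t,\cdot)\rangle_{L^2(\T,\mcV)} = \delta_{nm}$ holds. On the resulting $E$, the slice map defined on $\mathrm{span}\{e_n\}$ preserves inner products and extends by continuity to an isometric embedding of $\mathcal{W}$ into $H^2(\T,\mcV)$; by the second paragraph and closedness, the image lies in $H^2(\T)\ominus\phi(t,\cdot)H^2(\T)$. Rather than a real obstacle, the chief care required is measure-zero bookkeeping: ensuring $\phi(t,\cdot)$ is inner on $\D$ for $t\in E$ and identifying the continuous extension with pointwise slicing, both of which are standard Fatou-theoretic facts for $H^\infty(\D^2)$.
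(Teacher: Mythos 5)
Your proposal is correct and follows essentially the same route as the paper: both exploit that $\Kphi^1 \ominus Z_1\Kphi$ and $\Kphi^1 \ominus \Kphi$ are wandering subspaces for $Z_1$, translate the resulting orthogonality $\langle Z_1^j f, Z_1^k g\rangle = \delta_{jk}\langle f,g\rangle$ into the vanishing of all nonzero Fourier coefficients of $z_1 \mapsto \langle f(z_1,\cdot), g(z_1,\cdot)\rangle$, and invoke separability to obtain a single full-measure set of good slices. The verification that slices land in $H^2(\T)\ominus \phi(t,\cdot)H^2(\T)$ also matches the paper's argument.
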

\addtocounter{theorem}{-1}
\endgroup

\begin{proof}
The proof is the same for $\Kphi^1 \ominus Z_1 \Kphi$ and $\Kphi^1
\ominus \Kphi$.  The key facts we use are that both spaces are
contained in $\Hphi^1$ and since 
\[
\Kphi^1\ominus \Kphi = (\Hphi^1\ominus \Kphi) \ominus
Z_1(\Hphi^1\ominus \Kphi)
\]
and
\[
\Kphi^1 \ominus Z_1\Kphi = \Hphi^1 \ominus Z_1 \Hphi^1 ,
\]
both of these spaces are orthogonal to their translates by $Z_1$.  

We provide the proof now for only $\Kphi^1 \ominus \Kphi$. By the
above observations, for any $f,g \in \Kphi^1\ominus \Kphi =
(\Hphi^1\ominus \Kphi) \ominus Z_1(\Hphi^1\ominus \Kphi)$, we have
that $f \perp Z_1^jg$ for all $j \in \mathbb{Z}$ except $j=0$.
Therefore, for $j\ne 0$
\[
 0 = \int_{\T} z_1^j \int_{\T}\ip{f(z)}{g(z)}_{\mcV} d\sigma(z_2)
 d\sigma(z_1),
\]
which implies that
\[
\int_{\T} \ip{f(z_1,z_2)}{g(z_1,z_2)}_{\mcV} d \sigma(z_2) = \ip{f}{g}
\]
for almost every $z_1 \in \T$.  

This shows the restriction map $f \mapsto f(t,\cdot)$ is an isometry
from $\Kphi^1 \ominus \Kphi$ to $H^2(\T)$ for almost every $t \in \T$.
(By separability, we can show that given a countable dense set
$\mathcal{D}$ in $\Kphi^1\ominus \Kphi$, for almost every $t$, every
$f\in \mathcal{D}$ possesses slices $f(t,\cdot)\in L^2(\T)$. Since we
will have an isometry on this dense set, it will extend to be
isometric on the whole space.)

Now, $\phi(t,\cdot)$ is inner for almost every $t \in \T$, and since
$f \in \Hphi^1$ implies $f(t,\cdot) \in H^2(\T)$ and $\phi(t,\cdot)^*
f(t,\cdot) \in L^2_{-}(\T)$ for almost every $t$, we see that for $f
\in \Hphi^1$
\[
f(t,\cdot) \in H^2(\T) \ominus \phi(t,\cdot) H^2(\T)
\]
for almost every $t \in \T$.  (Again, we could argue using separability
that we are only taking ``almost every $t$'' a countable number of
times.)

Therefore, for almost every $t \in \T$, $f \mapsto f(t,\cdot)$ is an
isometry from $\Kphi^1\ominus \Kphi$ into $H^2(\T) \ominus \phi(t,\cdot)
H^2(\T)$.  

\end{proof}

An obvious question is then:

\begin{question}\label{q:restrict} 
Is the restriction map above \emph{onto} for almost every $t$? 
\end{question}

We have been unable to resolve this but having some regularity on the
boundary allows us to prove a partial result.

\begin{prop} \label{prop:restrict}
If $\phi$ extends continuously to a rectangle $X = X_1\times X_2
\subset \T^2$,
then the restriction map
\[
f \mapsto f(t, \cdot)
\]
embeds $\Kphi^1\ominus \Kphi$ and $\Kphi^1\ominus Z_1 \Kphi$
isometrically \emph{onto} $H^2(\T) \ominus \phi(t,\cdot) H^2(\T)$ for
almost every $t \in X_1$.
\end{prop}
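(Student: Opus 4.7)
The plan is to identify, for almost every $t \in X_1$, the slice $F^1((t, \cdot), (t, \cdot))$ of the canonical kernel with the reproducing kernel of the one-variable model space $H^2(\T) \ominus \phi(t, \cdot) H^2(\T)$, and then to invoke the standard principle that restriction of an RKHS $\mcH(K)$ to a subset $Y$ surjects onto the RKHS with kernel $K|_{Y \times Y}$.

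First, Theorem \ref{thm:regintro} applied to the open rectangle $X = X_1 \times X_2$ yields a domain $\Omega \supset X_1 \times \D$ on which $\phi$ is analytic, every element of $\Kphi^1 \ominus \Kphi$ and $\Kphi^1 \ominus Z_1 \Kphi$ is analytic, and $F^1, E^1, E^2, F^2$ all extend sesqui-analytically to $\Omega \times \Omega$. In particular, for each $t \in X_1$ the slice $f \mapsto f(t, \cdot)$ is a well-defined map into $H^2(\D)$, and the values $F^1((t, z_2), (t, w_2))$ and $E^2((t, z_2), (t, w_2))$ are finite for $z_2, w_2 \in \D$.

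Next, I substitute $z_1 = w_1 = t$ into the Agler decomposition \eqref{eqagdecompb}. Because $|t| = 1$, the term $(1 - z_1 \bar{w}_1) E^2(z, w)$ vanishes, giving
\[
F^1((t, z_2), (t, w_2)) = \frac{I - \phi(t, z_2) \phi(t, w_2)^*}{1 - z_2 \bar{w}_2}.
\]
For almost every $t \in X_1$, the one-variable function $\phi(t, \cdot)$ is matrix inner, so the right-hand side is exactly the reproducing kernel of $H^2(\T) \ominus \phi(t, \cdot) H^2(\T)$. The same calculation applied to \eqref{eqagdecomp} yields $E^1((t, z_2), (t, w_2)) = \frac{I - \phi(t, z_2) \phi(t, w_2)^*}{1 - z_2 \bar{w}_2}$ as well.

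Finally, recall that for any RKHS $\mcH(K)$ on a set $Z$ and any subset $Y \subset Z$, the restriction map $f \mapsto f|_Y$ is a coisometry onto the RKHS with kernel $K|_{Y \times Y}$ --- a standard fact that follows by identifying $\overline{\mathrm{span}}\{K_y : y \in Y\} \subset \mcH(K)$ isometrically with $\mcH(K|_{Y \times Y})$ through the reproducing property. Applying this with $K = F^1$ and $Y = \{t\} \times \D$, the kernel identity above shows that $f \mapsto f(t, \cdot)$ sends $\Kphi^1 \ominus \Kphi$ onto $H^2(\T) \ominus \phi(t, \cdot) H^2(\T)$. Combined with the isometry conclusion of Theorem \ref{thm:restrict} (valid for a.e. $t \in \T$, hence for a.e. $t \in X_1$), we obtain a surjective isometry for almost every $t \in X_1$. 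The argument for $\Kphi^1 \ominus Z_1 \Kphi$ is identical with $E^1$ in place of $F^1$. The main delicate point is the pointwise evaluation of the Agler kernel at the torus slice $z_1 = w_1 = t$, which is exactly what the sesqui-analytic extension provided by Theorem \ref{thm:regintro} makes rigorous; everything else is formal.
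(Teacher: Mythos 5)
Your proof is correct and follows essentially the same route as the paper: both identify the slice $F^1((t,\cdot),(t,\eta))$ with the reproducing kernel of $H^2(\T)\ominus\phi(t,\cdot)H^2(\T)$ by evaluating the Agler decomposition at $z_1=w_1=t$ (using the sesqui-analytic extension from Theorem \ref{thm:regintro} to justify this), and then conclude surjectivity from density of the image of the reproducing kernels, combined with the isometry from Theorem \ref{thm:restrict}. Your packaging via the Aronszajn restriction theorem (``restriction is a coisometry onto $\mcH(K|_{Y\times Y})$'') is just a slightly more formal phrasing of the paper's observation that the kernel images span a dense subset.
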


\begin{proof}
As before we treat the case $\Kphi^1 \ominus \Kphi$. By the regularity
results, $\phi$ and elements of $\Kphi^1$ extend analytically to a
domain $\Omega$ containing $\D^2, X , (X_1\times \D)$. In addition,
the reproducing kernels $F^1$ and $E^2$ are sesqui-analytic on $\Omega\times
\Omega$.  For $t \in X_1$, $\zeta, \eta \in \D$, we can use Theorem \ref{thm:fundagler}
to conclude
\[
\frac{1-\phi(t,\zeta)\phi(t,\eta)^*}{1-\zeta \bar{\eta}} =
F^1((t,\zeta),(t,\eta)).
\]
Therefore, for $\eta \in \D, \nu \in \mcV,$
\[
F^1_{(t,\eta)}\nu \mapsto \frac{1-\phi(t,\cdot)\phi(t,\eta)^*}{1-(\cdot)
  \bar{\eta}} \nu
\]
under the restriction map for $t\in X_1$.  Since $\phi(t,\cdot)$ is
inner for almost every $t$, this shows that the image of
$\Kphi^1\ominus \Kphi$ under this restriction map is a dense subset of
$H^2(\T)\ominus \phi(t,\cdot) H^2(\T)$ (namely the dense subset of the
span of reproducing kernels) for almost every $t\in X_1$.  Since the
restriction map is an isometry for almost every $t$, it must therefore
be a unitary for almost every $t\in X_1$.
\end{proof}

\begin{example} 
The example $\phi(z) = \frac{2z_1z_2-z_1-z_2}{2-z_1-z_2}$ shows that
we cannot have an isometry for \emph{every} $t$ in the restriction map
of Theorem \ref{thm:restrict}.  The simple reason is that $H^2(\T)
\ominus \phi(t,\cdot) H^2(\T)$ has dimension $1$ for all $t$ except
$t=1$ where it has dimension $0$, since $\phi(1,z_2) = -1$.
\EOEx
\end{example}

\section{A technical fact}

In the next sections we study rational inner functions.  The following
technical fact is essential.  A similar fact was needed in
\cite{gK10}.

\begin{prop} \label{prop:fp}
Suppose $p\in \C[z]$ has no zeros in $\D^2$, $f \in H^2$, and $f/p \in
L^2$. Then, $f/p \in H^2$.
\end{prop}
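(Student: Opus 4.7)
My plan is to slice in each coordinate variable and reduce the two-variable statement to a one-variable analogue, then reassemble via Fubini.

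The one-variable lemma I would prove first is: if $\pi\in\C[z]$ has no zeros in $\D$, $F\in H^2(\D)$, and $F/\pi\in L^2(\T)$, then $F/\pi\in H^2(\D)$. Factor $\pi = \pi_{\text{out}}\pi_{\text{bdy}}$ into the parts with zeros in $\{|z|>1\}$ and on $\T$ respectively. The factor $\pi_{\text{out}}$ is harmless because $1/\pi_{\text{out}}\in H^\infty(\D)$. By rotation and induction on the number of boundary zeros, the essential step is $\pi(z) = z-1$: setting $Q := F/(\zeta-1)\in L^2(\T)$, the identity $\zeta Q - Q = F\in H^2(\T)$ forces $\hat{Q}(n-1) = \hat{Q}(n)$ for every $n<0$, i.e.\ the negative Fourier coefficients of $Q$ form a constant sequence; square-summability then forces them all to vanish, so $Q\in H^2(\T)$.

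To pass to two variables I need the geometric fact that the exceptional set $B := \{\zeta_1\in\T : p(\zeta_1,\cdot) \text{ has a zero in } \D\}$ is finite. If $p(\zeta_1^0, w_0) = 0$ with $\zeta_1^0\in\T$ and $w_0\in\D$, I would pick an irreducible component $W$ of $\{p=0\}$ through $(\zeta_1^0,w_0)$ and parameterize it locally by $t\mapsto (z_1(t),z_2(t))$. If $z_1(t)$ were non-constant, the open mapping theorem would produce a small $t$ with $z_1(t)\in\D$ while $z_2(t)$ remains in $\D$ by continuity, giving a zero of $p$ inside $\D^2$ --- a contradiction. So $z_1\equiv\zeta_1^0$ on $W$, meaning $(z_1 - \zeta_1^0)\mid p$, and only finitely many such linear factors occur.

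Off this finite set, $p(\zeta_1,\cdot)\in\C[z_2]$ is a nonzero polynomial with no zeros in $\D$, $f(\zeta_1,\cdot)\in H^2(\T)$ by slicing $f\in H^2(\T^2)$, and $f(\zeta_1,\cdot)/p(\zeta_1,\cdot)\in L^2(\T)$ by Fubini applied to $f/p\in L^2(\T^2)$. The one-variable lemma then yields $f(\zeta_1,\cdot)/p(\zeta_1,\cdot)\in H^2(\T)$ for a.e.\ $\zeta_1$, which after integrating in $\zeta_1$ gives $\widehat{f/p}(j,k) = 0$ for every $k<0$. Running the argument with the roles of $z_1$ and $z_2$ swapped gives the analogous vanishing for $j<0$, so $f/p$ has Fourier support in $\Z_+^2$, i.e.\ $f/p\in H^2$. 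The main obstacle is the geometric preliminary: verifying that zeros of $p$ in $\T\times\D$ can only come from vertical algebraic components is the one non-routine ingredient, after which the Fourier computation for the one-variable case and the two Fubini applications are routine.
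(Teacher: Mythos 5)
Your proposal is correct, and the skeleton is the same as the paper's: slice in one variable via Fubini, reduce to a one-variable statement about division by a polynomial with no zeros in $\D$, and observe the exceptional set of bad slices is finite. The two ingredient lemmas are handled differently, though. For the one-variable step you give a direct Fourier-coefficient argument: the relation $\zeta Q - Q = F$ forces the strictly negative coefficients of $Q = F/(\zeta-1)$ to be a constant sequence, which square-summability kills, and you then handle a general denominator by splitting off an invertible outer factor and inducting on the boundary zeros. The paper instead invokes Smirnov class theory: a polynomial with no zeros in $\D$ is outer, so $1/p(\cdot,z_2)\in N^+$, and since $N^+$ is an algebra with $N^+\cap L^2 = H^2$ the slice of $f/p$ lies in $H^2(\T)$. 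Your version is more elementary and self-contained, at the cost of an explicit induction; the Smirnov argument is shorter if the $N^+$ facts are taken as known. For the finiteness of the exceptional set, you parameterize an irreducible branch of $\{p=0\}$ through a boundary point with interior second coordinate and use the open mapping theorem to force $z_1$ constant, hence a linear factor $z_1-\zeta_1^0$ of $p$; the paper (Lemma \ref{lem:outer}) gets the same conclusion more quickly via Hurwitz's theorem applied to $p(\cdot,r\zeta)\to p(\cdot,\zeta)$. Your geometric argument is correct but pulls in more machinery (Puiseux-type parameterization, identity theorem on an irreducible curve, a Nullstellensatz step to pass from $W\subseteq\{z_1=\zeta_1^0\}$ to divisibility) than the Hurwitz one-liner needs. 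One small point to keep in mind for the write-up: $H^2$ here is $\mcV$-valued while $p$ is scalar, so the one-variable lemma should be stated componentwise or for vector-valued $F$, though this changes nothing substantive.
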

\begin{proof}
By Fubini's theorem, $f(\cdot,z_2) \in H^2(\T)$ for a.e. $z_2 \in \T$;
the same holds for $p(\cdot,z_2)$.  Recall that for a polynomial to be
\emph{outer}, in the sense of Hardy spaces in the disk, it is
necessary and sufficient that it have no zeros in $\D$. By Lemma
\ref{lem:outer} below, $p(\cdot,z_2)$ is outer for all but finitely
many $z_2 \in \T$ since $p$ has no zeros on $\D^2$, and therefore both
$f(\cdot,z_2)$ and $1/p(\cdot,z_2)$ are in the Smirnov class $N^{+}$
for a.e. $z_2 \in \T$.  As $N^{+}$ is an algebra,
$f(\cdot,z_2)/p(\cdot,z_2)$ is in $N^{+}$ for a.e. $z_2 \in \T$. Since
$N^{+} \cap L^2(\T) = H^2(\T)$ (see \cite{pD70}),
$f(\cdot,z_2)/p(\cdot,z_2) \in H^2(\T)$ for a.e. $z_2 \in \T$.  This
implies $f/p \perp L^2_{-\bullet}$.  A similar argument shows $f/p
\perp L^2_{\bullet -}$.  Therefore, $f/p \in H^2$.
\end{proof}

\begin{lemma} \label{lem:outer} 
If $p \in \C[z_1,z_2]=\C[z]$ has no zeros on $\D^2$, then for all $z_2
\in \T$ with at most a finite number of exceptions, $p(\cdot,z_2)$ has
no zeros on $\D$.
\end{lemma}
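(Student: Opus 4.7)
The plan is to identify the exceptional $z_2 \in \T$ with a finite algebraic set, namely the common zero locus of the coefficients of $p$ viewed as a polynomial in $z_1$. First, I would write
\[
p(z_1,z_2) = \sum_{k=0}^{n} a_k(z_2)\, z_1^k
\]
with $a_k \in \C[z_2]$ and $a_n \not\equiv 0$. If $n=0$, then $p$ depends only on $z_2$, and the hypothesis that $p$ has no zeros in $\D^2$ forces $a_0$ to have no zeros in $\D$; its zeros in $\overline{\D}$ all lie on $\T$ and are finite in number. So the substantive case is $n\geq 1$.

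In that case, the claim I would establish is: if $z_2^* \in \T$ is exceptional (that is, $p(z_1^*, z_2^*)=0$ for some $z_1^* \in \D$), then $a_k(z_2^*)=0$ for every $k$, so $p(\cdot, z_2^*)$ vanishes identically in $z_1$. Granting the claim, the exceptional set on $\T$ lies in the common zero locus of $a_0,\ldots,a_n$; since $a_n \not\equiv 0$, this locus is the zero set of a nonzero univariate polynomial (e.g.\ $\gcd(a_0,\ldots,a_n)$), hence finite, and the lemma follows.

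The key step is to prove the claim by a Rouch\'e / Hurwitz argument. If $p(\cdot, z_2^*)$ were not identically zero, then $z_1^*$ would be an isolated zero of the nonzero univariate polynomial $p(\cdot, z_2^*)$, so I can pick $\epsilon>0$ so small that $\overline{B(z_1^*,\epsilon)} \subset \D$ and $p(\cdot, z_2^*)$ does not vanish on $\partial B(z_1^*,\epsilon)$. By joint continuity of $p$, the function $p(\cdot,z_2)$ is uniformly close to $p(\cdot,z_2^*)$ on this circle for $z_2$ near $z_2^*$, so Rouch\'e's theorem produces a zero of $p(\cdot,z_2)$ inside $B(z_1^*,\epsilon) \subset \D$ for all such $z_2$. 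Taking $z_2 \in \D$ sufficiently close to $z_2^*$ then yields a zero of $p$ in $\D^2$, contradicting the hypothesis. There is no real obstacle here; the lemma is essentially the continuity of roots of a parametrized polynomial away from the discrete set of parameter values where the polynomial degenerates to zero.
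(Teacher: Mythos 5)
Your proof is correct and takes essentially the same approach as the paper: the paper applies Hurwitz's theorem to the radial limit $p(\cdot,r\zeta)\to p(\cdot,\zeta)$ as $r\nearrow 1$, which is precisely the perturbation-of-roots argument you make directly via Rouch\'e, and your identification of the exceptional $z_2$ as common zeros of the coefficients $a_k(z_2)$ is equivalent to the paper's observation that $z_2-\zeta$ must then divide $p$.
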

\begin{proof} For $0<r<1$ and $\zeta \in \T$, $p(\cdot,r\zeta)$ has no zeros in
  $\D$.  By Hurwitz's theorem, it follows that $p(\cdot,\zeta)$ is
  either identically zero or has no zeros in $\D$.  If
  $p(\cdot,\zeta)$ is identically zero, $p$ must have $z_2-\zeta$ as a
  factor.  As $p \in \C[z]$ can only have finitely many factors of
  this form, the claim follows.
\end{proof}

\section{Matrix rational inner functions and Theorem \ref{thm:rifdescrip}} \label{sec:mrif}
Suppose $\phi$ is a rational matrix inner function. Then we write $\phi$ as
\[
\phi(z) = \frac{Q(z)}{p(z)}, 
\]
where $p\in\C[z]$ is the least common multiple of the denominators of
the entries of $\phi$ after each entry is put into reduced form and has
no zeros in $\D^2$, and $Q \in \C^{N\times N}[z]$ is a matrix
polynomial satisfying
\[
Q^*Q = |p|^2 I = Q^t \bar{Q} \text{ on } \T^2.
\]

\begin{lemma} \label{finitezeros}
 With $\phi= Q/p$ as above, $p$ has finitely many zeros on $\T^2$.
\end{lemma}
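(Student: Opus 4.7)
The plan is to exploit the identity $Q^*Q = |p|^2 I$ on $\T^2$, which extends from $\T^2$ to all of $\C^2$ as a polynomial identity by continuity of the polynomial entries, together with the fact that the LCM construction of $p$ forces $p$ and the collection of entries of $Q$ to have no common polynomial factor.

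First, I would observe that any zero $\zeta \in \T^2$ of $p$ is automatically a zero of every entry of $Q$. Indeed, $Q(\zeta)^*Q(\zeta) = |p(\zeta)|^2 I = 0$ forces $Q(\zeta) = 0$ as a matrix, so $Q_{ij}(\zeta) = 0$ for all $i,j$.

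Next, I would verify that $\gcd(p, Q_{11}, Q_{12}, \ldots, Q_{NN}) = 1$ in $\C[z_1,z_2]$. Writing each matrix entry in reduced form $\phi_{ij} = a_{ij}/b_{ij}$ with $\gcd(a_{ij},b_{ij})=1$, we have $p = \operatorname{lcm}(b_{ij})$ and $Q_{ij} = a_{ij}(p/b_{ij})$. If some irreducible $h$ divided $p$ and every $Q_{ij}$, then by maximality of the power of $h$ in the LCM there would exist $(i,j)$ with $h$-power in $b_{ij}$ equal to that in $p$, so $h \nmid p/b_{ij}$, forcing $h \mid a_{ij}$, contradicting reducedness of $\phi_{ij}$.

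Finally, I would combine these with the classical fact that two polynomials in $\C[z_1,z_2]$ with no common factor have a finite common zero set (a standard resultant / Bezout-type statement). Factoring $p = p_1 \cdots p_k$ into irreducibles, the previous step guarantees that for each $p_m$ some entry $Q_{i_m j_m}$ is not divisible by $p_m$; hence $V(p_m) \cap V(Q_{i_m j_m})$ is finite, and the zero set of $p$ on $\T^2$ is contained in $\bigcup_m V(p_m) \cap V(Q_{i_m j_m})$, a finite union of finite sets. The only step requiring real care is the gcd verification, since the definition of $p$ via entry-wise reduction and LCM is the exact ingredient that prevents $p$ and $Q$ from sharing a curve of common zeros; everything else is standard.
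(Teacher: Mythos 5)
Your proof is correct, and it takes a more elementary route than the paper's. The paper first invokes the atoral--toral factorization of Agler--McCarthy--Stankus \cite{AMS06} to write $p=p_1p_2$ with $p_1$ having finitely many zeros on $\T^2$ and each irreducible factor of $p_2$ having infinitely many, and then rules out any nontrivial factor of $p_2$ by the same LCM argument you use. You bypass the atoral--toral machinery entirely: factor $p$ into irreducibles, observe that any $\zeta\in\T^2$ with $p(\zeta)=0$ forces $Q(\zeta)=0$ via $Q(\zeta)^*Q(\zeta)=|p(\zeta)|^2I$, show via the LCM/reduced-form structure that no irreducible factor of $p$ divides all entries of $Q$, and then apply Bezout to each irreducible factor paired with a suitable entry $Q_{i_mj_m}$. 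Both proofs ultimately rest on the same two ingredients (the unitarity identity on $\T^2$ and the fact that the LCM construction rules out a common irreducible divisor of $p$ and all $Q_{ij}$), and both use the Bezout-type finiteness result for coprime bivariate polynomials; yours just applies it directly rather than routing through the atoral--toral decomposition, which is a genuine simplification. One small cosmetic point: your opening remark that $Q^*Q=|p|^2I$ ``extends from $\T^2$ to all of $\C^2$ as a polynomial identity by continuity'' is not quite right as stated, since $Q^*$ and $|p|^2$ involve conjugation and are not holomorphic in $z$ (the correct polynomial extension off $\T^2$ is $\tilde Q Q=\tilde p\, p\, I$ with $\tilde Q(z)=z^dQ(1/\bar z)^*$); but your actual argument never uses that extension, so this is harmless.
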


\begin{proof}
For $Q/p$ to be holomorphic, it is necessary that $p$ have no
zeros in $\D^2$.  Every polynomial with no zeros in $\D^2$ may be
factored into two such polynomials $p=p_1p_2$ where $p_1$ has finitely
many zeros on $\T^2$, and each irreducible factor of $p_2$ has
infinitely many zeros on $\T^2$. We allow either factor to be a
constant. (This is the atoral-toral factorization of
Agler-McCarthy-Stankus \cite{AMS06}.)  Our claim is that $p_2$ is a constant. If
$p_2$ has some nontrivial irreducible factor $f$, then since $Q^*Q =
|p|^2I$, every entry of $Q$ vanishes on the zero set of $f$ and hence
every entry is divisible by $f$. (Two bivariate polynomials with
infinitely many common zeros must have factor in common.)  This
contradicts the fact that $p$ is the least common multiple of the
denominators of $\phi$.
\end{proof}

  Let $d =(d_1,d_2)$ be the maximal degree of $Q$. Then, $\tilde{Q}(z)
  = z^d Q(1/\bar{z})^*$ is a matrix polynomial and $\tilde{p}(z) = z^d
  \overline{p(1/\bar{z})}$ is a polynomial.  Set
\[
\tilde{\phi} = \frac{\tilde{Q}}{p},
\]
which is inner since $\tilde{Q}\tilde{Q}^* = Q^*Q = |p|^2I$ on $\T^2$.
Notice also that $\phi \tilde{\phi} =\frac{\tilde{p}}{p}I$.Define
\[
\begin{aligned}
\Pzp &= \{ q/p \in H^2: q \in \C^N[z], \deg q \leq (d_1-1,d_2-1) \} \\
\Pop &= \{ q/p \in H^2: q \in \C^N[z], \deg q \leq (d_1,d_2-1) \} \\
\Ptp &= \{ q/p \in H^2: q \in \C^N[z], \deg q \leq (d_1-1,d_2) \}. 
\end{aligned}
\]
Note that these spaces depend on the degree of $Q$ (and not
necessarily $p$).

We repeat Theorem \ref{thm:rifdescrip} here for convenience.

\begingroup
\def\thetheorem{\ref{thm:rifdescrip}}
\begin{theorem} If $\phi$ is a rational matrix inner function, then 
\[
\begin{aligned}
\Kphi &= \{f \in \Pzp: \tilde{\phi} f \in \Pzp \} \\
\Kphi^1 &= \{f \in \Pop: \tilde{\phi} f \in \Pop \} \\
\Kphi^2 &= \{f \in \Ptp: \tilde{\phi} f \in \Ptp \}.
\end{aligned}
\]
\end{theorem}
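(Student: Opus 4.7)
The plan is to prove each of the three identities by double inclusion, with the $\Kphi^1$ and $\Kphi^2$ cases obtained from the $\Kphi$ case by carrying an extra factor of $z_1$ or $z_2$ through the same bookkeeping. The key algebraic fact linking $\phi$ and $\tilde\phi$ is $\phi\tilde\phi = \tilde\phi\phi = (\tilde p/p)\,I$, where $\tilde p(z) = z^d\overline{p(1/\bar z)}$. This follows from the pointwise identity $\tilde Q = z^d Q^*$ on $\T^2$ together with $QQ^* = Q^*Q = |p|^2 I$ on $\T^2$: both sides are matrix polynomials agreeing on $\T^2$, hence are equal. One consequence needed below is $\phi^* = (p/\tilde p)\tilde\phi$ on $\T^2$, valid a.e.\ since $\phi$ is a.e.\ unitary there.

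For the forward inclusion in the first identity, suppose $f\in\Kphi$, so $f = \phi h$ for some $h\in L^2_{--}$. Then $pf = Qh$ has Fourier support contained in $(\{0,\dots,d_1\}\times\{0,\dots,d_2\}) + \Z_-^2 \subset \Z_{\leq d_1-1}\times\Z_{\leq d_2-1}$, and since $pf\in H^2$, its support actually lies in the box $\{0,\dots,d_1-1\}\times\{0,\dots,d_2-1\}$; thus $f\in\Pzp$. Using the key identity, $p\tilde\phi f = \tilde p\, h$ on $\T^2$, and the same support argument (noting $\deg\tilde p\leq d$) gives $\tilde\phi f\in\Pzp$.

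The backward inclusion is the main point. Given $f = q/p\in\Pzp$ and $\tilde\phi f = g/p\in\Pzp$, the key identity forces $\phi^* f = g/\tilde p$ on $\T^2$, so it remains to show $g/\tilde p\in L^2_{--}$. The main obstacle is that $\tilde p$ may vanish anywhere in $\overline{\D^2}$ (for instance on $\T^2$, or even inside $\D^2$ when $\deg p < d$), so $1/\tilde p$ admits no convenient series expansion. The remedy is a reflection trick: set $\hat g(z) := z^{d-(1,1)}\overline{g(1/\bar z)}$, again a polynomial of bidegree $\leq (d_1-1, d_2-1)$. On $\T^2$, $|\hat g| = |g|$ and $|\tilde p| = |p|$, so $\hat g/p$ and $g/\tilde p$ have equal modulus a.e.; since $g/\tilde p = \phi^* f\in L^2$, the function $\hat g/p$ is in $L^2$, and Proposition \ref{prop:fp} upgrades this to $\hat g/p\in H^2$. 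A direct computation on $\T^2$ using $\tilde p = z^d\bar p$ and $\bar g = z^{(1,1)-d}\hat g$ then yields $\overline{z_1 z_2 \cdot g/\tilde p} = \hat g/p$, and since $L^2_{--} = \bar z_1\bar z_2\overline{H^2}$, this is precisely the statement $g/\tilde p\in L^2_{--}$. Combined with $f\in H^2$, we conclude $f\in\Kphi$.

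For the $\Kphi^j$ cases, the plan is identical, with $L^2_{--}$ replaced by $Z_j L^2_{--}$ and the degree box enlarged to $(d_1,d_2-1)$ or $(d_1-1,d_2)$; the forward-direction support calculation absorbs one extra $z_j$, and in the backward direction the reflection $\hat g$ is defined using the enlarged bidegree, with the identity becoming $\overline{z_{3-j}\cdot g/\tilde p} = \hat g/p$, which is equivalent to $g/\tilde p\in Z_j L^2_{--}$.
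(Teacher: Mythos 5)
Your proof is correct and follows essentially the same route as the paper's: the forward inclusion via Fourier-support (equivalently degree) bookkeeping, and the backward inclusion by reflecting the numerator of $\tilde\phi f$ and invoking Proposition \ref{prop:fp} to place $\hat g/p$ in $H^2$, which converts to $\phi^* f \in L^2_{--}$. The only cosmetic difference is that you package the matrix identities into the scalar relation $\phi\tilde\phi = (\tilde p/p)I$ up front, whereas the paper manipulates $Q^*$, $\tilde Q$, and $\tilde Q^t$ directly.
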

\addtocounter{theorem}{-1}
\endgroup

\begin{proof} 
We prove the theorem only for $\Kphi$; the claims for $\Kphi^1,
\Kphi^2$ are similar.  Set $d' = (d_1-1,d_2-1)$ and $Z^{d'} =
Z_1^{d_1-1} Z_2^{d_2-1}$.  We frequently use the fact that $f \in H^2$
and $Z^{d'} \bar{f} \in H^2$ implies $f$ is a polynomial of degree at
most $d'$.

If $g \in \Kphi$, then $g \in H^2$ and
\[
g = \frac{Q}{p} \overline{Z_1Z_2 h}
\]
for some $h \in H^2$.  Then,
\[
\begin{aligned}
pg &= Q \overline{Z_1Z_2 h} \in H^2 \\
Z^{d'} \overline{pg} &= Z^d \bar{Q} h = \tilde{Q}^t h \in H^2
\end{aligned}
\]
shows $q=pg$ is a polynomial of degree at most $d'$; i.e. $g \in
\Pzp$.  This also shows $\tilde{q}=\tilde{Q}^t h$ is a polynomial of
degree at most $d'$. Observe now that 
\[
Q^t\tilde{q} = Q^t \tilde{Q}^t h = \tilde{p} p h,
\]
which implies that
\[
\frac{Q^t\tilde{q}}{\tilde{p}} = ph \in H^2.
\]
On the other hand,
\[
Z^{d'} \overline{ph} = \frac{Q^* q}{\bar{Z}^d p} =
\frac{\tilde{Q} q}{p} = \tilde{\phi} q \in H^2,
\]
which shows $ph$ is a polynomial of degree at most $d'$; i.e. $h \in
\Pzp$.  In addition, $\tilde{\phi} q$ is a polynomial of degree at
most $d'$ and therefore,
\[
\tilde{\phi} \frac{q}{p} = \tilde{\phi} g \in \Pzp.
\]
  
Thus, we have proved
\[
\Kphi \subset \{f \in \Pzp: \tilde{\phi} f \in \Pzp\}
\]
and need to establish the opposite inclusion.  Suppose $f \in
\Pzp$ and
\begin{equation} \label{eq:opp}
\frac{\tilde{Q}}{p} f = \frac{r}{p}
\end{equation}
with $r \in \C[z]$ and $\deg r \leq d'$. Set $\tilde{r} = Z^{d'}\bar{r}.$
We must show $\phi^*f \in L^2_{--}$.  Observe that $Z^d Q^* f = r$
implies $Q^*f = \overline{Z_1Z_2\tilde{r}}$ and hence,
\[
\phi^* f = \bar{Z}_1\bar{Z}_2 \overline{\frac{\tilde{r}}{p}}. 
\]
By \eqref{eq:opp}, $r/p \in L^2$, which implies $\tilde{r}/p \in L^2$
and so by Proposition \ref{prop:fp}, $\tilde{r}/p$ is in
$H^2$. Therefore, $\phi^* f \in L^2_{--}$ as desired.
\end{proof}

By our definitions, in the scalar case, $\tilde{\phi} =\frac{p}{p} =
1$.

\begin{corollary} \label{cor:srifdescrip}
If $\phi = \tilde{p}/p$ is a scalar rational inner function, then
\[
\Kphi = \Pzp \qquad \Kphi^1 = \Pop \qquad \Kphi^2 = \Ptp.
\]
If $p$ has no zeros on $\T^2$, then 
\[
\Kphi = \{ q/p: \deg q \leq (d_1-1,d_2-1)\}
\]
and similarly for $\Kphi^1,\Kphi^2$.
\end{corollary}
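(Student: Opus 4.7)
The plan is to reduce Corollary \ref{cor:srifdescrip} to Theorem \ref{thm:rifdescrip} after pinning down $\tilde{\phi}$ in the scalar setting. First I would compute $\tilde{Q}$ when $Q = \tilde{p}$: applying the tilde operation twice, $\widetilde{\tilde{p}}(z) = z^{d}\,\overline{\tilde{p}(1/\bar{z})}$, and expanding $\tilde{p}(1/\bar{z}) = \bar{z}^{-d}\,\overline{p(z)}$ collapses this to $\tilde{Q}(z) = p(z)$. Hence $\tilde{\phi} = p/p \equiv 1$, so the side conditions ``$\tilde{\phi} f \in \Pzp$'' (respectively $\Pop$, $\Ptp$) appearing in Theorem \ref{thm:rifdescrip} become automatic once $f$ is already in the corresponding space. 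This yields $\Kphi = \Pzp$, $\Kphi^1 = \Pop$, and $\Kphi^2 = \Ptp$ at once.

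For the second assertion I would show that the $H^2$-membership condition built into the definitions of $\Pzp$, $\Pop$, $\Ptp$ is redundant when $p$ has no zeros on $\T^2$. Combined with the standing hypothesis that $p$ has no zeros in $\D^2$, the assumption forces $p$ to be zero-free on the compact set $\overline{\D^2}$. By continuity $|p|$ is bounded below there, so $1/p \in H^{\infty}(\D^2)$. Consequently $q/p \in H^{\infty} \subset H^2$ for every polynomial $q$ of the allowed multidegree, and the membership condition in the definition of $\Pzp$ (and likewise $\Pop$, $\Ptp$) can be dropped.

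I do not anticipate a substantive obstacle here: the only computation of consequence is the scalar identity $\tilde{Q} = p$, and the $H^{\infty}$ argument for the second part is immediate from compactness. The corollary is essentially a specialization of Theorem \ref{thm:rifdescrip} together with the elementary remark that denominators which do not vanish on $\overline{\D^2}$ pose no integrability issue.
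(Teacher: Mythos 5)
Your proposal is correct and follows the paper's own reasoning: the paper states immediately before the corollary that $\tilde{\phi} = p/p = 1$ in the scalar case (making the side condition in Theorem \ref{thm:rifdescrip} vacuous), and your computation of $\tilde{Q} = p$ simply spells out why. The second part via boundedness of $1/p$ on the compact set $\overline{\D^2}$ is likewise the same observation the paper records immediately after the corollary when it notes the equivalence with $1/p \in L^2(\T^2)$.
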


As an interesting aside, note that for scalar $\phi$, $\Kphi = \{q/p:
\deg q \leq (d_1-1,d_2-1)\}$ if and only if $1/p \in L^2(\T^2)$ if and
only if $p$ has no zeros on $\T^2$.  The main thing to check is that
$1/p\in L^2(\T^2)$ implies $p$ has no zeros in $\T^2$.  Since $p$ has only
finitely many zeros on $\T^2$, this a local problem.  So, let us
assume $p(1,1)=0$ and prove
\[
\iint_{[-\epsilon,\epsilon]^2} \frac{1}{|p(e^{i\theta_1},
  e^{i\theta_2})|^2} d\theta_1 d\theta_2 = \infty.
\]
In this case, 
\[
p(z) = \sum_{1 \le | \alpha|\le n}
C_{\alpha}(1-z_1)^{\alpha_1}(1-z_2)^{\alpha_2},
\]
where $|\alpha| = \alpha_1 +\alpha_2,$ and then 
\[
|p(e^{i\theta_1},e^{i\theta_2})|^2 \leq \text{const}(|1-e^{i\theta_1}|^2 +
|1-e^{i\theta_2}|^2) \leq \text{const}(\theta_1^2+\theta_2^2).
\]
This shows
\[
\iint_{[-\epsilon,\epsilon]^2} \frac{1}{|p(e^{i\theta_1},
  e^{i\theta_2})|^2} d\theta_1 d\theta_2 \geq \text{const}
\iint_{[-\epsilon,\epsilon]^2} \frac{1}{\theta_1^2+\theta_2^2}
d\theta_1 d\theta_2,
\]
which diverges.

\section{Examples}

We use several examples to highlight differences between the
situation when $\phi$ is scalar rational inner and the situation when $\phi$ is 
matrix rational inner. 

First, if $\phi$ is scalar rational inner, continuous on
$\overline{\D^2}$, and has a unique Agler decomposition, then $\phi$
is a function of one variable. (See \cite{gK10}.) This result fails
when $\phi$ is matrix rational inner. Clearly, if $\phi(z)$ is diagonal with
functions of one variable alone on the diagonal, then $\phi$ has a
unique Agler decomposition.  This still holds if we replace $\phi$
with $U\phi U^*$ where $U$ is a constant unitary matrix. 

However, those are not the only matrix rational inner 
functions with unique decompositions. 

\begin{example} \label{ex:reguniquedecomp}
 Let
\[ 
\phi(z) = \frac{1}{2}\left ( 
\begin{array}{cc}
z_1(z_1 +z_2) & z_1(z_1-z_2) \\
z_1-z_2 & z_1+z_2 
\end{array}
\right).
\]
Then $\phi$ is matrix rational inner, and it is easy to show that
$\phi$ is not of the form $UD(z)U^*$, where $U$ is unitary
and $D(z)$ is diagonal with entries that are functions of either $z_1$
or $z_2$ alone. We use Theorem \ref{thm:rifdescrip} to calculate
$\Kphi.$ Let $f \in \Kphi.$ Then $\deg f \le (1,0),$ and we can
write \[ f(z) = \left(
\begin{array}{c}
a_1 +b_1z_1\\
a_2 +b_2z_1
\end{array}
\right),
\]
for constants $a_1,a_2,b_1,b_2.$ An easy calculation gives
\[
\tilde{\phi}(z) = 
\frac{1}{2}\left ( 
\begin{array}{cc}
z_1 +z_2 & z_1(z_2-z_1) \\
z_2-z_1 & z_1(z_1+z_2) 
\end{array}
\right).
\]
As $f \in \Kphi$, we have
\[
\tilde{\phi}(z) f =
\frac{1}{2}\left ( 
\begin{array}{cc}
z_1 +z_2 & z_1(z_2-z_1) \\
z_2-z_1 & z_1(z_1+z_2) 
\end{array}
\right)
\left(
\begin{array}{c}
a_1 +b_1z_1\\
a_2 +b_2z_1
\end{array}
\right)
= \left(\begin{array}{c}
c_1  +d_1z_1\\
c_2 +d_2z_2
\end{array}
\right),
\]
where $c_1, c_2, d_1, d_2$ are constants. Thus,
\[
\tfrac{1}{2}(z_1 +z_2)(a_1 +b_1z_1) + \tfrac{1}{2} z_1(z_2-z_1)( a_2 +b_2z_1) = c_1  +d_1z_1,
\]
and an examination of the coefficients implies $a_1=a_2=b_1=b_2 = 0.$ Thus,
$\Kphi = \{0\},$ and it follows from Corollary \ref{cor:unique} that $\phi$ has a
unique Agler decomposition. The decomposition is given by
 \[ \begin{aligned} 
 \left( \begin{array}{cc}
 1 & 0 \\
 0 & 1 
 \end{array}
 \right)
 &- \frac{1}{4}  \left( 
 \begin{array}{cc}
z_1( z_1 +z_2) & z_1(z_1-z_2) \\
 z_1-z_2 & z_1+z_2 
 \end{array}
 \right)
  \left( 
 \begin{array}{cc}
 \bar{w_1}(\bar{w}_1 +\bar{w}_2) & \bar{w}_1-\bar{w}_2 \\
 \bar{w}_1(\bar{w}_1-\bar{w}_2) & \bar{w}_1+\bar{w}_2 
 \end{array}
 \right) \\
 & = (1-z_1 \bar{w}_1) \left( \frac{1}{2}
 \left( \begin{array}{cc}
 0 & z_1 \\
 0 & 1
 \end{array}
 \right)
 \left( \begin{array}{cc}
 0 & 0 \\
 \bar{w}_1 & 1
 \end{array}
 \right)
 +
 \left( \begin{array}{cc}
 1 & 0 \\
 0 & 0
 \end{array}
 \right)^2 \right) \\
& \hspace{.2in}+
 (1-z_2 \bar{w}_2) \frac{1}{2}
  \left( \begin{array}{cc}
 0 & -z_1 \\
 0 & 1
 \end{array}
 \right)
 \left( \begin{array}{cc}
 0 & 0 \\
 -\bar{w}_1 & 1
 \end{array}
 \right).
 \end{aligned}
 \]
\EOEx
\end{example}

\begin{question} \label{q:unique}
Can one characterize the regular matrix rational inner
  functions with unique Agler decompositions?  What can be said in
  the non-regular case?
\end{question}

Now we consider another difference between the matrix and scalar
cases. For $\phi = Q/p$ rational inner, observe that $\deg Q$ is not
necessarily the exact degree of every entry of $Q.$ This discrepancy
breaks down some of the structure seen in the scalar rational inner
case. As shown in Corollary \ref{cor:srifdescrip}, for $\phi$ scalar
rational inner, $ \Pzp \subseteq \Hphi,$ and $\Hphi \cap \Pzp = \Kphi.$
However, the following example illustrates that neither relation holds
for an arbitrary rational matrix inner function.

\begin{example} Let $\phi_1(z) = \tfrac{3z_1z_2-z_1-z_2}{3-z_1-z_2}$
and $\phi_2(z) =z^2_1z^2_2$ and define 
\[\phi(z) = \left ( 
\begin{array}{cc}
\phi_1(z) & 0 \\
0 & \phi_2(z) 
\end{array}
\right). \]
Then $p(z) = 3-z_1-z_2$, and we can rewrite $\phi$ as 
\[
\frac{Q(z)}{p(z)} 
=
 \frac{1}{3-z_1-z_2}
\left(
\begin{array}{cc}
3z_1z_2-z_1-z_2 & 0 \\
0 & z^2_1z^2_2(3-z_1-z_2)
\end{array}
\right),
\]
so that $\deg Q = (3,3).$ Observe that 
\[
\Hphi = \Big \{ \Big (
\begin{array}{c}
f_1 \\
f_2
\end{array}
\Big ) 
: f_i \in \mcH_{\phi_i} 
\text{ for } i=1,2
\Big \},
\]
and we can calculate $\mcH_{\phi_i}$ from Proposition 4.8 in \cite{kB11} as follows:
\[
\begin{aligned}
\mcH_{\phi_1} &= \{ \tfrac{f}{p} \in H^2(\T^2) :  
\hat{f}(j_1,j_2) = 0 \text{ for } j_1 > 0 \text{ and } j_2 > 0 \} \\
\mcH_{\phi_2} &= \{ f \in H^2(\T^2) :  
\hat{f}(j_1,j_2) = 0 \text{ for } j_1 > 1 \text{ and } j_2 > 1 \}.
\end{aligned}
\]
It is almost immediate that $\Pzp \not \subseteq \Hphi.$  Specifically, one can show
\[
\Big ( \begin{array}{c}
q_1 \\
q_2 
\end{array} 
\Big) / p  \in \Pzp \cap \Hphi
\]
if and only if each term in $q_1$ has degree zero
in one variable and degree at most two in the other, and $q_2$ is of 
the form $p(z)r(z)$, where $\deg r \le (1,1).$  Thus, 
$\Pzp \cap \Hphi  \ne \Pzp. $

Using Theorem \ref{thm:rifdescrip}, one can show $\Kphi$ is the set 
\[ 
\Kphi = \Big \{ 
\Big ( \begin{array}{c}
q_1 \\
q_2  
\end{array} 
\Big ) / p :
 q_1 \in \C, \ q_2=p r, \text{ where } \deg r \le (1,1) \Big \}.
\]
Thus, $ \Pzp \cap \Hphi  \not \subseteq \Kphi.$

\EOEx
\end{example}

\begin{example} \label{nonextremeaglerdecomps}

There are still interesting questions in the scalar case.  First,
observe that the set of Agler kernels $(A^1,A^2)$ of a function $\phi$ 
is a convex set. Now, consider the space $C(\D^4) \times C(\D^4)$, the direct product of
the space of continuous functions on $\D^4$ with itself, endowed with the topology of
uniform convergence on compact sets. It is easy to show that the set of 
Agler kernels of $\phi$  is a compact subset
of $C(\D^4) \times C(\D^4)$, i.e. that every sequence has a subsequence that 
converges to a pair of Agler kernels of $\phi.$ 
Then we can apply the Krein-Milman theorem to conclude that 
the set of Agler kernels of $\phi$  is the closed convex hull of its extreme points.

Agler kernels $(A^1,A^2)$ are said to come from an
orthogonal decomposition if $A^1/(1-z_1 \bar{w}_1), A^2/(1-z_2 \bar{w}_2)$ 
are reproducing kernels of closed subspaces of $\Hphi$.  This is equivalent to $G^1,G^2$ in
Theorem \ref{thm:maxmin} being reproducing kernels of orthogonal
closed subspaces of $\Kphi$ whose direct sum is all of $\Kphi$.  Agler
kernels coming from an orthogonal decomposition are extreme points in
the set of Agler kernels of $\phi$. We shall
use the basic example $\phi(z) = z_1^2z_2$ to show that there can be
other extreme points in the set of Agler kernels, and we raise the
following question.

\begin{question} \label{q:extreme}
Given an inner $\phi$, can one describe the extreme
  points in the set of all Agler kernels associated to $\phi$?
\end{question}

For $\phi(z) = z_1^2z_2$, $\Kphi = \vee\{1,Z_1\},
\Kphi^1\ominus \Kphi = \vee\{Z_1^2\}, \Kphi^2\ominus \Kphi =
\vee\{Z_2,Z_1Z_2\}$, which imply that
\[
G = 1+z_1\bar{w}_1,\quad F^1 = z_1^2\bar{w}_1^2,\quad F^2 =
z_2\bar{w}_2(1+z_1 \bar{w}_1).
\] 
Theorem \ref{thm:maxmin} shows that the only way to construct Agler
kernels $(A^1,A^2)$ of $\phi$ is to choose positive kernels $G^1, G^2$
such that
\[
z_1^2\bar{w}_1^2 +(1-z_1\bar{w}_1)G^1 = A^1 \kgeq 0
\]
and
\[
z_2\bar{w}_2(1+z_1 \bar{w}_1) + (1-z_2\bar{w}_2)G^2 = A^2 \kgeq 0.
\]
Observe that $A^1,A^2$ only come from an orthogonal decomposition if
additionally, $G^1,G^2$ are kernels of subspaces of $\Kphi$. It is easy to
that the only possible $G^1,G^2$ coming from such subspaces are
\begin{enumerate}
\item $G^1 = G,  G^2 = 0.$
\item $G^1 = 0,  G^2 = G.$
\item $G^1 = (a+bz_1)\overline{(a+bw_1)}, G^2 =
  (\bar{b}-\bar{a}z_1)\overline{ (\bar{b}-\bar{a}w_1)},$ where \\
 $|a|^2+|b|^2=1$.  
\end{enumerate}

The first two possibilities can indeed occur.  The third possibility
only occurs when $a=0, |b|=1$.  To see this, note that in the third
possibility
\[
\frac{A^1}{1-z_1\bar{w}_1} = G^1 + \frac{z_1^2
  \bar{w}_1^2}{1-z_1\bar{w}_1}
\]
must be the reproducing kernel of a subspace, call it $S$, of $\Hphi$, which is
invariant under multiplication by $Z_1$.  So, if $G^1 =
(a+bz_1)\overline{(a+bw_1)}$, then $a+bZ_1 \in S$, and so
$aZ_1+bZ_1^2 \in S$. Since $Z_1^2 \in \Kphi^1\ominus \Kphi$, we must have
$aZ_1 \in S$.  Therefore, if $a \ne 0$, then $Z_1 \in S,$ which implies $1
\in S$.  This puts us back in case (1) above.  So, $a=0$ and $|b|=1$,
which really means the only possibility is $G^1 = z_1\bar{w}_1, G^2 =
1$.

Therefore the only Agler kernels coming from orthogonal decompositions
are 
\[
\begin{aligned}
(1) \ \ & A^1(z,w) = 1, && A^2(z,w) = z_2\bar{w}_2(1+z_1\bar{w}_1) \\
(2) \ \ & A^1(z,w) = z_1^2\bar{w}_1^2, && A^2(z,w) = 1+z_1\bar{w}_1 \\
(3) \ \ & A^1(z,w) = z_1\bar{w}_1, && A^2(z,w) = 1+ z_1z_2\bar{w}_1\bar{w}_2.
\end{aligned}
\]

Convex combinations of these kernels are of the form
\begin{equation} \label{eqconvexcomb}
\begin{aligned}
A^1(z,w) &= a + b z_1\bar{w}_1 + c z_1^2\bar{w}_1^2 \\
 A^2(z,w) &= az_2\bar{w}_2(1+z_1\bar{w}_1) + b (1+z_1z_2\bar{w}_1\bar{w}_2) + c
(1+z_1\bar{w}_1),
\end{aligned}
\end{equation}
where $a+b+c=1$, $a,b,c \geq 0$. On the other hand, the following is a pair of Agler kernels
$(A^1,A^2)$ which are not of this form (we evaluate on the diagonal to
save space).
\[
\begin{aligned}
A^1(z,z) &= \tfrac{1}{4}|1+z_1|^2 + \tfrac{1}{4}|z_1|^2|1-z_1|^2 \\
&= |z_1^2|^2 + (1-|z_1|^2)G^1(z,z),
\end{aligned}
\]
where $G^1(z,z) = \tfrac{1}{4}(|1+z_1|^2+2|z_1|^2)$, and
\[
\begin{aligned}
A^2(z,z) &= \tfrac{1}{2} + \tfrac{1}{2}|z_1z_2|^2 + \tfrac{1}{4}|1-z_1|^2
+ \tfrac{1}{4}|z_2|^2|1+z_1|^2 \\
&= |z_2|^2(1+|z_1|^2)+(1-|z_2|^2)G^2(z,z),
\end{aligned}
\]
where $G^2(z,z) = \tfrac{1}{4}(2+|1-z_1|^2)$.  The pair $(A^1,A^2)$ is
not of the form \eqref{eqconvexcomb} since $A^1,$ for instance, contains
a $z_1$ term, but there is no such term in \eqref{eqconvexcomb}.
\EOEx
\end{example}

\section{Review of dimensions in one variable}

Before we move on to determine the dimensions of certain canonical
subspaces in two variables, it helps to review what happens in the
matrix case in one variable.

If $\phi$ is a matrix rational inner function of one variable, the
space $\Hphi = H^2(\T) \ominus \phi H^2(\T)$ has dimension determined
by the degree of $\det \phi$.  Specifically, $\det \phi$ is a finite
Blaschke product, and the dimension of $\Hphi$ is the number of factors
in the Blaschke product:
\begin{equation} \label{eq:onevardim}
\dim H^2(\T) \ominus \phi H^2(\T) = \deg \det \phi.
\end{equation}
 This is known as the Smith-McMillan degree.

To prove this, we need the Smith Normal form (see \cite{HK71}*{Section
  7.4}).  Writing $\phi = Q/p$, there exist matrix polynomials $S,T$
with matrix polynomial inverses such that
\[
S^{-1} Q T^{-1} = D = \text{diag}(D_1, D_2, \dots, D_N)
\]
is a diagonal matrix polynomial where $D_i$ divides $D_{i+1}$.

Then, \[
\phi H^2 = \frac{1}{p} S D T H^2 = \frac{1}{p} S D H^2. 
\]

Now, $H^2 \ominus \phi H^2$ is isomorphic as a vector space to the
quotient $H^2/\phi H^2,$ which is in turn isomorphic to
\[
S^{-1}H^2/S^{-1}\phi H^2
= H^2/\frac{D}{p} H^2. 
\]
Since $D$ is diagonal, this space is an algebraic direct sum of scalar
spaces $H^2/\frac{D_j}{p} H^2$.  Each factor has dimension given by
the number of zeros of $D_j$ which lie in the unit disk.  Since $\det
S=s_0$, $\det T=t_0$ are nonzero constants (since these matrix
polynomials have matrix polynomial inverses), and since
\[
\det \phi = s_0t_0 \frac{\prod D_j}{p^N} 
\]
is a Blaschke product, the total dimension will be the number of zeros
of this Blaschke product, counting multiplicities.

\section{Theorem \ref{thm:rifdim} on dimensions of canonical subspaces}

We now assume $\phi = Q/p$ is an $N\times N$ matrix valued rational
inner function on $\D^2$ as in Section \ref{sec:mrif}.  The scalar
function $\det \phi = \frac{1}{p^N}\det Q$ is a rational inner
function on $\D^2$.  It therefore has a representation as
\[
\det \phi = \frac{\tilde{g}}{g},
\]
 where $g$ is a polynomial with no zeros on $\D^2$ and no factors in
 common with $\tilde{g},$ and $\tilde{g}(z) = z_1^{M_1} z_2^{M_2}
 \overline{g(1/\bar{z}_1, 1/\bar{z}_2)}$ for some integers $M_1,M_2$
 (see Rudin \cite{wR69} Section 5.2).  We necessarily have that $g$
 divides $p^N,$ which means $g$ has finitely many zeros on $\T^2$.

\begingroup
\def\thetheorem{\ref{thm:rifdim}}
\begin{theorem}
\[
\begin{aligned}
\dim \Kphi^1 \ominus Z_1 \Kphi &= \dim \Kphi^1 \ominus \Kphi = \deg_2 \tilde{g} \\
\dim \Kphi^2 \ominus Z_2 \Kphi &= \dim \Kphi^2 \ominus \Kphi = \deg_1 \tilde{g}. 
\end{aligned}
\]
\end{theorem}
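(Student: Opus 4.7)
The plan is to reduce the dimension count to a one-variable Smith--McMillan calculation by means of the slice-restriction isomorphism from Proposition \ref{prop:restrict}.

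First, by Theorem \ref{thm:rifdescrip} both $\Kphi$ and $\Kphi^1$ are finite-dimensional. Since $Z_1$ acts injectively on $H^2$, $\dim Z_1\Kphi = \dim \Kphi$, and hence
\[
\dim(\Kphi^1 \ominus Z_1\Kphi) \;=\; \dim \Kphi^1 - \dim \Kphi \;=\; \dim(\Kphi^1 \ominus \Kphi).
\]
Thus the two equalities on the first line collapse to a single computation; the second line of the theorem is handled identically after interchanging the roles of the variables, so I focus on showing $\dim(\Kphi^1 \ominus \Kphi) = \deg_2 \tilde{g}$.

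Next, because $\phi = Q/p$ and $p$ has only finitely many zeros on $\T^2$ by Lemma \ref{finitezeros}, I can choose an open rectangle $X = X_1 \times X_2 \subset \T^2$ on which $p$ does not vanish, so $\phi$ extends continuously to $X$. Proposition \ref{prop:restrict} then gives, for almost every $t \in X_1$, an isometric isomorphism of $\Kphi^1 \ominus \Kphi$ \emph{onto} $H^2(\T) \ominus \phi(t,\cdot) H^2(\T)$ via $f \mapsto f(t,\cdot)$. Combining this with the one-variable Smith--McMillan formula \eqref{eq:onevardim} yields
\[
\dim(\Kphi^1 \ominus \Kphi) \;=\; \dim\bigl(H^2(\T) \ominus \phi(t,\cdot)H^2(\T)\bigr) \;=\; \deg \det \phi(t,\cdot)
\]
for such $t$.

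The substantive step is therefore to verify that $\deg \det \phi(t,\cdot) = d_2$ for almost every $t \in \T$. Writing $\det \phi = \tilde{g}/g$ in lowest terms, I would check three things for a.e.\ $t$: (i) the leading coefficients of $g$ and $\tilde{g}$ viewed as polynomials in $z_2$, each a nonzero element of $\C[z_1]$, do not vanish at $t$, so $\deg_{z_2} g(t,\cdot) = \deg_{z_2} \tilde{g}(t,\cdot) = d_2$; (ii) the resultant $\operatorname{Res}_{z_2}(g,\tilde{g}) \in \C[z_1]$, which is nonzero because $g,\tilde{g}$ are coprime in $\C[z_1,z_2]$, does not vanish at $t$, so $g(t,\cdot)$ and $\tilde{g}(t,\cdot)$ remain coprime as univariate polynomials; and (iii) by Lemma \ref{lem:outer} with the roles of $z_1,z_2$ reversed, $g(t,\cdot)$ has no zeros in $\D$. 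For such $t$, the reflection identity $\tilde{g}(t,z_2) = t^{d_1} z_2^{d_2}\,\overline{g(t,1/\bar z_2)}$ (valid on $\T$ since $1/\bar t = t$) exhibits $\tilde{g}(t,\cdot)$ as a unimodular multiple of the reciprocal polynomial of $g(t,\cdot)$, so $\det \phi(t,\cdot)$ is a finite Blaschke product of degree exactly $d_2$.

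The main obstacle is precisely this last step: one must verify that passing to a generic slice preserves both the bidegree and the coprimeness of $(\tilde{g},g)$, so that no factor of $\tilde{g}(t,\cdot)$ cancels against one of $g(t,\cdot)$ and collapses the degree of the resulting one-variable Blaschke product. The nonvanishing of the resultant is the linchpin here; once that is in hand, the rest is bookkeeping against results already established.
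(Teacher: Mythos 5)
Your proposal takes essentially the same route as the paper: restrict to a generic slice $\{t\}\times\D$ via Proposition \ref{prop:restrict}, then invoke the one-variable Smith--McMillan degree formula \eqref{eq:onevardim}. The one technical variation is in how cancellation in $\tilde g(t,\cdot)/g(t,\cdot)$ is ruled out. You appeal to nonvanishing of $\operatorname{Res}_{z_2}(g,\tilde g)$; the paper instead chooses $t$ so that $g$ has no zeros on the entire closed slice $\{t\}\times\overline\D$ (not merely $\D$), after which the zeros of $g(t,\cdot)$ all lie strictly outside $\overline\D$ while the reflection formula forces every zero of $\tilde g(t,\cdot)$ strictly into $\D$, so coprimeness is automatic and the Blaschke-product degree is read off from $\deg_{z_2}\tilde g(t,\cdot)$ directly. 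Your resultant argument is correct -- a common zero on $\T$ would make the resultant vanish at $t$, so in the end you recover the same stronger non-vanishing on $\overline\D$ -- but it imports algebra that the disjointness of the two zero sets already supplies for free. Two small remarks. First, your finite-dimensional count
\[
\dim(\Kphi^1\ominus Z_1\Kphi)=\dim\Kphi^1-\dim Z_1\Kphi=\dim\Kphi^1-\dim\Kphi=\dim(\Kphi^1\ominus\Kphi)
\]
is a clean way to collapse the first equality; the paper obtains it instead from the observation that Proposition \ref{prop:restrict} carries both $\Kphi^1\ominus Z_1\Kphi$ and $\Kphi^1\ominus\Kphi$ onto the same one-variable model space. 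Second, in your item (i) the claim $\deg_{z_2} g(t,\cdot)=d_2$ is not always true (for $\det\phi=z_2$ one has $g\equiv 1$ and $\deg_2 g=0<1=d_2$); fortunately it is also not used, since only $\deg_{z_2}\tilde g(t,\cdot)=d_2$ enters the degree count of the resulting Blaschke product.
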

\addtocounter{theorem}{-1}
\endgroup

The notation $\deg_j q$ refers to the degree of $q(z_1,z_2) \in
\C[z_1,z_2]$ in the variable $z_j$.

\begin{proof}
As the argument is the same for $\Kphi^j \ominus Z_j \Kphi$ and $\Kphi^j
\ominus \Kphi,$  we only address $\Kphi^1 \ominus \Kphi$.

There are only finitely many $t\in \T$ such that $p$ has a zero on the
line $\{t\}\times \overline{\D}$ by Lemmas \ref{lem:outer} and
\ref{finitezeros}.  If we choose $t$ such that $p$ has no zeros on
$\{t\}\times \overline{\D}$, then $\phi$ will be analytic in a
neighborhood of $\{t\}\times \T$.  Perturbing $t$ if necessary, by
Proposition \ref{prop:restrict}, the map
\[
\frac{f}{p} \mapsto \frac{f(t,\cdot)}{p(t,\cdot)}
\]
will map $\Kphi^1 \ominus \Kphi$ onto $H^2(\T)\ominus \phi(t,\cdot)
H^2(\T)$ isometrically. Hence,
\begin{equation} \label{restrictdim}
\dim \Kphi^1 \ominus \Kphi = \dim H^2(\T)\ominus \phi(t,\cdot)
H^2(\T).
\end{equation}

As $g$ has no zeros on the line $\{t\}\times \overline{\D}$ and no
zeros in $\D^2$, $\tilde{g}(t,\cdot)/g(t,\cdot)$ is a Blaschke product
of degree $\deg \tilde{g}(t,\cdot)$. (The degree could have been less
if $g(t,\cdot)$ had a zero on $\T$.) Further,
\begin{equation}\label{gdegree}
\deg \tilde{g}(t,\cdot) = \deg_2 \tilde{g}.
\end{equation}
To see this, write $M_2=\deg_2 \tilde{g}$ and
\[
g(z_1,z_2) = \sum_{j=0}^{M_2} g_j(z_1) z_2^j.
\]
We see that
\[
\tilde{g}(z_1,z_2) = \sum_{j=0}^{M_2} \tilde{g}_{M_2-j}(z_1) z_2^j,
\]
where we perform ``reflection'' of the one variable polynomials at the
appropriate degree $M_1$.  The top coefficient is $\tilde{g}_0(z_1)$,
which does not vanish for $z_1 = t,$ else $g(t,0) = g_0(t)$ would
vanish (which means $g$ would vanish on $\{t\}\times \overline{\D},$
where it does not).  Hence, $\tilde{g}(t,z_2)$ has degree precisely
$M_2$ in $z_2$.

Combining \eqref{gdegree}, \eqref{eq:onevardim}, and
\eqref{restrictdim}, we have
\[
\deg_2 \tilde{g} = \deg \tilde{g}(t,\cdot) = \dim H^2(\T)\ominus
\phi(t,\cdot) H^2(\T) = \dim \Kphi^1 \ominus \Kphi,
\]
as desired.

\end{proof}

These dimension results also hold for reproducing kernel Hilbert spaces
associated to more general Agler decompositions.

\begin{remark} Assume $(A^1,A^2)$ are Agler kernels of $\phi$ such 
that the reproducing kernel Hilbert spaces with kernels 
\[
\frac{A^1(z,w)}{1-z_1\bar{w}_1} \text{ and } \frac{A^2(z,w)}{1-z_2\bar{w}_2} 
\]
are closed subspaces of $H^2.$ Then, $\mcH(A^1)$ and $\mcH(A^2)$
are orthogonal to their translates by $Z_1$ and $Z_2$ respectively.
Moreover, $\mcH(A^1) \subseteq \Hphi^1$ and $\mcH(A^2) \subseteq
\Hphi^2.$ The subspaces discussed in Theorem \ref{thm:rifdim} are clearly
special cases of these general reproducing kernel Hilbert spaces, and
the arguments in Theorem \ref{thm:restrict}, Proposition
\ref{prop:restrict}, and Theorem \ref{thm:rifdim} are valid for these
more general $\mcH(A^1)$ and $\mcH(A^2)$. Specifically,
\[
\begin{aligned}
\dim \mcH(A^1)&= \deg_2 \tilde{g} \\
\dim \mcH(A^2) &= \deg_1 \tilde{g}. 
\end{aligned}
\]

On the other hand, for \emph{general} Agler kernels the best that can
be said is 
\[
\begin{aligned}
\dim \Kphi^1 &\geq \dim \mcH(A^1) &\geq \deg_2 \tilde{g} \\
\dim \Kphi^2 &\geq \dim \mcH(A^2) &\geq \deg_1 \tilde{g}.
\end{aligned}
\]
The upper bounds follow from Corollary \ref{cor:Agker} while the lower
bounds are the content of Corollary \ref{cor:minag}, which we now
prove.
\end{remark}

\begin{proof}[Proof of Corollary \ref{cor:minag}]
By Theorem \ref{thm:maxmin} there exists a positive kernel $G^1 \kleq
G$ so that
\[
A^1(z,w) = F^1(z,w) + (1-z_1\bar{w}_1)G^1(z,w).
\]
By Proposition \ref{prop:restrict} we may choose $t \in \T$ such that
the restriction map $f \mapsto f(t,\cdot)$ maps $\Kphi^1\ominus \Kphi$
one-to-one and onto $H^2(\T)\ominus \phi(t,\cdot)H^2(\T)$.  This shows
the kernels $F^1_{(t,\eta)}v$, where $\eta$ varies over $\D$ and $v
\in \mcV$, are dense in $\Kphi^1\ominus \Kphi$ since any $f$
orthogonal to all such kernels would vanish on the set
$\{(t,\eta):\eta \in \D\}$.  Such an $f$ would then map to zero under
the restriction map, contradicting the fact that it is one-to-one and
onto.

Finally,
\[
A^1((t,z_2),(t,w_2)) = F^1((t,z_2),(t,w_2))
\]
and this shows $\dim \mcH(A^1)$ is at least the dimension of the space
spanned by $F^1_{(t,\eta)}(t,\cdot)v$ where $\eta \in \D$, $v\in
\mcV$. This is the same as the dimension of the space spanned by
$F^1_{(t,\eta)}v$ where $\eta \in \D$, $v\in \mcV$, since the
restriction map is bijective.  Therefore,
\[
\dim \mcH(A^1) \geq \dim \Kphi^1\ominus \Kphi = \deg_2\tilde{g}.
\]
The proof for $A^2$ is similar.
\end{proof}

\begin{proof}[Proof of Corollary \ref{cor:transfer}]
Given our Agler decomposition
\[
1-\phi(z)\phi(w)^* = (1-z_1\bar{w}_1)E^2(z,w) +
(1-z_2\bar{w}_2)F^1(z,w),
\]
we rearrange to yield
\[
1+z_1\bar{w}_1E^2(z,w) + z_2\bar{w}_2 F^1(z,w) = \phi(z)\phi(w)^* +
E^2(z,w) + F^1(z,w)
\]
and write 
\[E^2(z,w) = \sum_{j=1}^{d_1} E_j(z)E_j(w)^* \ \ \text{ and }   \ \ F^1(z,w) =
\sum_{j=1}^{d_2} F_j(z)F_j(w)^*, 
\]
 where $\{E_1,\dots, E_{d_1}\}$ is an
orthonormal basis for $\Kphi^2\ominus Z_2\Kphi$ and $\{F_1,\dots,
F_{d_2}\}$ is an orthonormal basis for $\Kphi^1\ominus \Kphi$.

It simplifies notation to write $E(z) = (E_1(z),\dots,E_{d_1}(z))$, $F(z) = (F_1(z),
\dots, F_{d_2}(z))$. Then we have
\[
E^2(z,w) = E(z)E(w)^*, F^1(z,w) = F(z)F(w)^*.
\]
It can be shown that the map defined for each row vector $v \in
\C^{N}$ and $z \in \D^2$ by:
\[
(v,z_1 v E(z), z_2 v F(z))^t \mapsto (v\phi(z), vE(z), vF(z))^t
\]
extends to a unitary $U$ from $\C^N\oplus \C^{d_1}\oplus \C^{d_2}$ to
itself. This is called a ``lurking isometry argument.''  Since this is
a standard trick, we refer the reader to the proof of Lemma 6.7 in
\cite{gK10} where more details on this trick are provided.

We then obtain the formula
\begin{equation} \label{lurkingisometry}
U\begin{bmatrix} I \\ z_1E(z)^t \\  z_2F(z)^t\end{bmatrix}
= \begin{matrix} \ & \begin{matrix} \C^N & \C^{|d|} \end{matrix}
  \\
\begin{matrix} \C^N \\ \C^{|d|} \end{matrix} & \begin{pmatrix} A &
  B \\ C & D \end{pmatrix} \end{matrix} \begin{bmatrix} I \\ z_1E(z)^t
  \\  z_2F(z)^t\end{bmatrix}
= \begin{bmatrix} \phi(z)^t \\ E(z)^t \\ F(z)^t \end{bmatrix}.
\end{equation}
It is now possible to solve for $\phi(z)^t$ and see that it has a
representation as in Corollary \ref{cor:transfer}.  Indeed,
\[
\begin{aligned}
A + Bd(z) (E(z), F(z))^t &= \phi(z)^t \\
C + Dd(z) (E(z), F(z))^t &= (E(z),F(z))^t
\end{aligned}
\]
and this implies 
\begin{equation} \label{formulaEF}
(E(z),F(z))^t = (I-Dd(z))^{-1}C
\end{equation}
which implies 
\[
\phi(z)^t = A+Bd(z)(I-D d(z))^{-1}C. 
\]

Of course, we could have applied the above argument to $\phi(z)^t$ to
see that $\phi(z)$ has such a representation as well.  (This annoyance
stems from the fact that we prefer to have column-vector-valued spaces
of functions.)

If we had a representation of $\phi$ using a \emph{smaller} unitary
$U$, say of size $(N+k_1+k_2)\times (N+k_1+k_2)$, then it is possible
to reverse the arguments to get Agler kernels from equation
\eqref{formulaEF} whose dimensions are $(k_1,k_2)$, which is not
possible.
\end{proof}

\section{Theorem \ref{thm:threevar}, an application to three variables}

Theorem \ref{thm:threevar} can be slightly rephrased as follows:

\begingroup
\def\thetheorem{\ref{thm:threevar}}
\begin{theorem} If $p \in \C[z_1,z_2,z_3]$ has degree $(n,1,1)$ and no
  zeros on $\overline{\D^3}$, then 
\[
|p(z)|^2 - |\tilde{p}(z)|^2 = \sum_{j=1}^{3} (1-|z_j|^2) SOS_j(z,z),
\]
where $SOS_2$ and $SOS_3$ are sums of two squares, while $SOS_1$ is a
sum of $2n$ squares.
\end{theorem}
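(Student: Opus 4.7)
The plan is to reduce to the two-variable \emph{matrix} theory developed earlier in the paper and then convert back to the scalar trivariate setting via a lurking-isometry / transfer-function argument. This broadly follows the outline of the proof in \cite{gKsacrifott}, but the key improvement is to use the minimal-dimension Agler kernels provided by Corollary \ref{cor:minag} (via Theorem \ref{thm:rifdim}) in place of the looser Agler decompositions employed there.

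Concretely, since $p$ has tridegree $(n,1,1)$ I would write
\[
p(z_1,z_2,z_3) = \sum_{0\le i,j\le 1} p_{ij}(z_1)\, z_2^i z_3^j,
\qquad p_{ij}\in\C[z_1],\ \deg p_{ij}\le n,
\]
and similarly for $\tilde p$. The first step is a matricization in the $z_1$-direction: build an auxiliary matrix rational inner function $\Phi$ on the bidisk in variables $(z_2,z_3)$ whose size and bidegree encode the $z_1$-dependence of $p$ and $\tilde p$, chosen so that $\det\Phi$ is a scalar bidisk rational inner function of the smallest possible bidegree, and so that the original scalar trivariate $\phi$ is recoverable from $\Phi$ through a transfer-function identity in $z_1$. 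The second step applies Theorem \ref{thm:fundagler} to $\Phi$ to obtain a matrix Agler decomposition
\[
I - \Phi(z_2,z_3)\Phi(w_2,w_3)^* = (1-z_2\bar{w}_2)A^2(z,w) + (1-z_3\bar{w}_3)A^1(z,w),
\]
and then invokes Corollary \ref{cor:minag}: the reproducing kernel Hilbert spaces $\mathcal{H}(A^1)$ and $\mathcal{H}(A^2)$ can be taken of the \emph{minimal} dimensions prescribed by the bidegrees of $\det\Phi$, which by the choice in Step 1 are each equal to $1$. Each matrix kernel $A^j$ therefore has the form $v_j(z)v_j(w)^*$ for a single matrix-valued holomorphic $v_j$ on $\D^2$.

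The third step converts this bidisk matrix identity back to a scalar trivariate identity by a lurking isometry in the $z_1$-variable, exactly in the style of the proof of Corollary \ref{cor:transfer}: contracting the rank-one matrix kernels against the $z_1$-polynomial vectors coming from the matricization turns each $A^j$ into a scalar sum of two squares, giving $SOS_2$ and $SOS_3$ with two squares each (the factor $2$ is forced by the minimal matricization, which has to encode both $p$ and its reflection $\tilde p$), while the transfer-function defect in the $z_1$-direction produces the term $(1-z_1\bar w_1)\,SOS_1$ with number of squares equal to the combined $z_1$-degree of the polynomial vectors, namely $2n$. The main technical obstacle is Step 1: setting up the matricization so that $\det\Phi$ has exactly the minimal bidegree needed to support the target $(2n,2,2)$ count. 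A non-minimal matricization, or equivalently a non-minimal Agler decomposition in Step 2, would propagate larger rank counts through the transfer-function conversion, as happened in the earlier \cite{gKsacrifott} version producing $(4n,2(n+1),2)$; the sharp improvement is precisely the place where Corollary \ref{cor:minag} is essential, so the plan requires careful dimension tracking through the matricization and lurking-isometry steps to verify that each minimal count is achieved.
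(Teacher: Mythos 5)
Your high-level plan (reduce to a two-variable matrix inner function, exploit the minimal-dimension results, convert back by lurking isometry) is the right one, but the way you set it up diverges from the paper and leaves an essential gap. The paper does \emph{not} matricize in the $z_1$-direction: it writes $p = a(z_1,z_2) + b(z_1,z_2)z_3$ (using $\deg_3 p = 1$), factors $|a|^2 - |b|^2 = \|E\|^2$ as a rank-two sum of squares on $\T^2$, and constructs an explicit $3\times 3$ rational inner function $V$ depending on $(z_1,z_2)$ only, satisfying
\[
V(z_1,z_2)\begin{bmatrix} p \\ z_3 E\end{bmatrix} = \begin{bmatrix}\tilde p \\ E\end{bmatrix}.
\]
A direct computation gives $\det V = \tilde a(\tilde a + b)/[a(a+\tilde b)]$, whose numerator has degree $(2n,2)$; applying Corollary \ref{cor:transfer} to $V$ (this is where Theorem \ref{thm:rifdim} enters, packaged with achievability, not just the lower bound of Corollary \ref{cor:minag}) yields a transfer function of minimal size $3+2n+2$, which on contraction with $(p, z_3 E)^t$ produces $SOS_1$ with $2n$ squares and $SOS_2$ with $2$ squares, while $\|E\|^2$ supplies $SOS_3$ with $2$ squares. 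The high-degree variable $z_1$ is handled \emph{inside} the bidisk theory for $V$, and the low-degree variable $z_3$ is absorbed into the vector.

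You instead propose a $\Phi$ depending only on $(z_2,z_3)$ that encodes the $z_1$-dependence, and claim $\det\Phi$ has bidegree $(1,1)$ giving rank-one Agler kernels. This Step 1 is not carried out, and there is no reason to expect it works: $\Phi$ must represent data from a degree-$n$ polynomial in $z_1$, so its size (and plausibly its determinant's bidegree) should grow with $n$. Even granting rank-one $A^1, A^2$, your transfer function for $\Phi$ produces only the $(1-|z_2|^2)$ and $(1-|z_3|^2)$ terms; the $(1-|z_1|^2)\,SOS_1$ term would have to emerge as a separate defect, and the claim that it is a sum of exactly $2n$ squares is asserted without computation. Contrast this with the paper, where the $(1-|z_3|^2)$ defect is rank $2$ precisely because $\deg_3 p = 1$ makes $|a|^2-|b|^2$ a difference of two scalars; your $z_1$-defect involves degree-$n$ data and would require a genuinely new argument to bound its rank by $2n$. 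In short, the direction of your matricization undoes the structural advantage the paper exploits, and the missing Step 1 plus the unjustified dimension counts constitute a real gap rather than a deferred detail.
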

\addtocounter{theorem}{-1}
\endgroup

Recounting all of the details of \cite{gKsacrifott} would take us too
far afield, so we shall only sketch the proof.  It is mostly a matter
of inserting Corollary \ref{cor:transfer} into the proper place in the
proof.

\begin{proof}[Sketch of proof:]
Write $p(z_1,z_2,z_3) = a(z_1,z_2)+b(z_1,z_2)z_3,$ where $a, b \in
\C[z_1,z_2]$ have degree at most $(n,1)$.  Define 
\[
\tilde{a}(z_1,z_2) = z_1^n z_2\overline{a(1/\bar{z}_1, 1/\bar{z}_2)},
\tilde{b}(z_1,z_2) = z_1^n z_2\overline{b(1/\bar{z}_1, 1/\bar{z}_2)}.
\]
Using the stability of $p$, it is possible to show $a$ and
$a+\tilde{b}$ have no zeros in $\overline{\D^2}$.  It is shown in
\cite{gKsacrifott} that for $z_1,z_2\in \T$ we may factor
\[
|a(z_1,z_2)|^2 - |b(z_1,z_2)|^2
\]
as a sum of two squares as follows:
\[
|a(z_1,z_2)|^2 - |b(z_1,z_2)|^2 = \|E(z_1,z_2)\|^2 = |E_1(z_1,z_2)|^2 +
|E_2(z_1,z_2)|^2,
\]
where $E = (E_1,E_2)^t \in \C^2[z_1,z_2]$ is a (column) vector valued
polynomial of degree at most $(n,1)$. (The $E$ here has no relation to
the reproducing kernel $E$ from earlier parts of the current paper.
We are attempting to match the notation of \cite{gKsacrifott}.)  Let
\[
\tilde{E}(z_1,z_2) = z_1^n z_2 \overline{E(1/\bar{z}_1, 1/\bar{z}_2)}.
\]

One can show
\[
V := \frac{1}{a} \begin{bmatrix} \tilde{b} & \tilde{E}^t \\ E &
  \frac{E\tilde{E}^t - a(\tilde{a} + b)I}{a + \tilde{b}} \end{bmatrix}
\]
is a $3\times 3$ matrix valued inner function with the property that
\begin{equation} \label{property}
V(z_1,z_2) \begin{bmatrix} p(z_1,z_2,z_3) \\ z_3
  E(z_1,z_2) \end{bmatrix} = \begin{bmatrix} \tilde{p}(z_1,z_2,z_3)
  \\ E(z_1,z_2) \end{bmatrix}
\end{equation}
for $(z_1,z_2,z_3) \in \D^3$. In order to use Theorem \ref{thm:rifdim}, 
we need to compute $\det V$. It is a direct calculation that
\begin{equation} \label{determinant}
\det V = \frac{\tilde{a}(\tilde{a} + b)}{a(a+\tilde{b})},
\end{equation}
which has degree at most $(2n,2)$ since $a,b$ have degree at most
$(n,1)$. A quick way to see this is to observe that  
\[
V\begin{bmatrix} a & b & 0 \\ 0 & E_1 & -\tilde{E}_2 \\ 0 & E_2 &
\tilde{E}_1 \end{bmatrix} = 
\begin{bmatrix} \tilde{b} & \tilde{a} & 0 \\ 
E_1 & 0 & \frac{\tilde{a}+b}{a+\tilde{b}} \tilde{E}_2 \\
E_2 & 0 & -\frac{\tilde{a}+b}{a+\tilde{b}} \tilde{E}_1 \end{bmatrix}.
\]
The first two columns on the right side are the result of
\eqref{property}, while the third column on the right comes from
$(-\tilde{E}_2, \tilde{E}_1)\tilde{E} = 0$.  If we now take the
determinant of both sides, we get
\[
(\det V) a (E_1\tilde{E}_1+E_2 \tilde{E}_2) =
\tilde{a}\frac{\tilde{a}+b}{a+\tilde{b}} (E_1\tilde{E}_1+E_2
\tilde{E}_2),
\]
which implies \eqref{determinant}.

By Corollary \ref{cor:transfer} or equation \eqref{lurkingisometry}
applied to $\phi=V^t$, there is a $(3+2n+2)\times (3+2n+2)$ unitary
$U$ such that
\[
U\begin{bmatrix} I \\ z_1 G_1(z) \\ z_2 G_2(z) \end{bmatrix}
= \begin{bmatrix} V(z) \\ G_1(z) \\ G_2(z) \end{bmatrix},
\]
where $G_1$ is a $2n\times 3$ matrix valued rational function, and
$G_2$ is a $2\times 3$ matrix valued rational function.
 
If we multiply this equation on both sides by $Y = \begin{bmatrix} p
  \\ Z_3 E \end{bmatrix},$ we get via \eqref{property} that
\[
U \begin{bmatrix} p \\ Z_3 E \\ Z_1H_1 \\ Z_2 H_2 \end{bmatrix}
= \begin{bmatrix} \tilde{p} \\ E \\ H_1 \\ H_2 \end{bmatrix},
\]
where $H_1 = G_1 Y$ and $H_2 = G_2Y$. Since $U$ is unitary, if we take
norms (pointwise) of both sides and rearrange, we are left with
\[
|p(z)|^2 - |\tilde{p}(z)|^2 = \sum_{j=1,2} (1-|z_j|^2) \|H_j(z)\|^2 +
(1-|z_3|^2) \|E(z)\|^2.
\]
Now, $H_1$ is $2n\times 1$ and $H_2$ is $2\times 1$, so that
$\|H_1\|^2$ is a sum of $2n$ squares, $\|H_2\|^2$ is a sum of $2$ and
$\|E\|^2$ is a sum of 2.  It is shown in \cite{gKrifitsac} that the
entries of $H_1$ and $H_2$ must be polynomials (specifically, see
Claim 2 on page 351 of \cite{gKrifitsac}).
\end{proof}

\section{Final comments}
Let us end by highlighting further areas of work and a few questions
in addition to the previously-asked Questions \ref{q:optimal},
\ref{q:restrict}, \ref{q:unique}, and \ref{q:extreme}.

It would be interesting to completely describe the canonical spaces
for more exotic inner functions or even the inner functions of Ahern
\cite{pA72}, which are essentially rational in one of the variables.

For inner functions of more than two variables, certain decompositions
of $\Hphi$ analogous to Theorem \ref{thm:fundagler} were discovered in
\cite{GKVW09} (and this was followed up in \cite{gK11}), but it
remains a challenge to form a useful decomposition of $\Hphi$
involving ``small'' subspaces like $\Kphi, \Kphi^1,
\Kphi^2$.  It would especially be interesting to decompose the
reproducing kernel for $\Hphi$ for rational inner $\phi$ using only
finite dimensional spaces and their shifts.  As already mentioned, an
Agler decomposition as in Theorem \ref{thm:threevar} will not hold more
generally, even for rational inner functions. Even when such a
decomposition does hold, it not clear whether or not 
decompositions can be constructed naturally from orthogonal sums of
associated Hilbert spaces.  The construction in the proof of Theorem
\ref{thm:threevar} is most likely not of this form.

\begin{bibdiv}
\begin{biblist}
\bib{AM02}{book}{
   author={Agler, Jim},
   author={McCarthy, John E.},
   title={Pick interpolation and Hilbert function spaces},
   series={Graduate Studies in Mathematics},
   volume={44},
   publisher={American Mathematical Society},
   place={Providence, RI},
   date={2002},
   pages={xx+308},
   isbn={0-8218-2898-3},
   review={\MR{1882259 (2003b:47001)}},
}

\bib{AM05}{article}{
   author={Agler, Jim},
   author={McCarthy, John E.},
   title={Distinguished varieties},
   journal={Acta Math.},
   volume={194},
   date={2005},
   number={2},
   pages={133--153},
   issn={0001-5962},
   review={\MR{2231339 (2007c:47006)}},
   doi={10.1007/BF02393219},
}

\bib{AMS06}{article}{
   author={Agler, Jim},
   author={McCarthy, John E.},
   author={Stankus, Mark},
   title={Toral algebraic sets and function theory on polydisks},
   journal={J. Geom. Anal.},
   volume={16},
   date={2006},
   number={4},
   pages={551--562},
   issn={1050-6926},
   review={\MR{2271943 (2007j:32002)}},
}

\bib{AMY11}{article}{
   author={Agler, Jim},
   author={McCarthy, John E.},
   author={Young, N. J.},
   title={Facial behaviour of analytic functions on the bidisc},
   journal={Bull. Lond. Math. Soc.},
   volume={43},
   date={2011},
   number={3},
   pages={478--494},
   issn={0024-6093},
   review={\MR{2820138}},
   doi={10.1112/blms/bdq115},
}		

\bib{AMY11b}{article}{
   author={Agler, Jim},
   author={McCarthy, John E.},
   author={Young, N. J.},
   title={A Carath\'eodory theorem for the bidisk via Hilbert space methods},
   journal={Mathematische Annalen},
   date={to appear},
}

\bib{AMY12}{article}{
   author={Agler, Jim},
   author={McCarthy, John E.},
   author={Young, N. J.},
   title={Operator monotone functions and L\"{o}wner functions of
     several variables},
   journal={Ann. of Math. (2)},
   date={to appear},
}

\bib{pA72}{article}{
   author={Ahern, P. R.},
   title={Singular sets of inner functions},
   journal={Indiana Univ. Math. J.},
   volume={21},
   date={1971/72},
   pages={147--155},
   issn={0022-2518},
   review={\MR{0310289 (46 \#9390)}},
}

\bib{AC69}{article}{
   author={Ahern, P. R.},
   author={Clark, D. N.},
   title={Invariant subspaces and analytic continuation in several
   variables. },
   journal={J. Math. Mech.},
   volume={19},
   date={1969/1970},
   pages={963--969},
   review={\MR{0261340 (41 \#5955)}},
}

\bib{nA50}{article}{
   author={Aronszajn, N.},
   title={Theory of reproducing kernels},
   journal={Trans. Amer. Math. Soc.},
   volume={68},
   date={1950},
   pages={337--404},
   issn={0002-9947},
   review={\MR{0051437 (14,479c)}},
}

\bib{BSV05}{article}{
   author={Ball, Joseph A.},
   author={Sadosky, Cora},
   author={Vinnikov, Victor},
   title={Scattering systems with several evolutions and multidimensional
   input/state/output systems},
   journal={Integral Equations Operator Theory},
   volume={52},
   date={2005},
   number={3},
   pages={323--393},
   issn={0378-620X},
   review={\MR{2184571 (2006h:47013)}},
   doi={10.1007/s00020-005-1351-y},
}

\bib{aB48}{article}{
   author={Beurling, Arne},
   title={On two problems concerning linear transformations in Hilbert
   space},
   journal={Acta Math.},
   volume={81},
   date={1948},
   pages={17},
   issn={0001-5962},
   review={\MR{0027954 (10,381e)}},
}

\bib{kB11}{misc}{
  author = {Bickel, Kelly},
  title = {Fundamental Agler Decompositions},
  date = {2011}
}

\bib{CW99}{article}{
   author={Cole, Brian J.},
   author={Wermer, John},
   title={Ando's theorem and sums of squares},
   journal={Indiana Univ. Math. J.},
   volume={48},
   date={1999},
   number={3},
   pages={767--791},
   issn={0022-2518},
   review={\MR{1736979 (2000m:47014)}},
   doi={10.1512/iumj.1999.48.1716},
}

\bib{pD70}{book}{
   author={Duren, Peter L.},
   title={Theory of $H^{p}$ spaces},
   series={Pure and Applied Mathematics, Vol. 38},
   publisher={Academic Press},
   place={New York},
   date={1970},
   pages={xii+258},
   review={\MR{0268655 (42 \#3552)}},
}

\bib{GW04}{article}{
   author={Geronimo, Jeffrey S.},
   author={Woerdeman, Hugo J.},
   title={Positive extensions, Fej\'er-Riesz factorization and
   autoregressive filters in two variables},
   journal={Ann. of Math. (2)},
   volume={160},
   date={2004},
   number={3},
   pages={839--906},
   issn={0003-486X},
   review={\MR{2144970 (2006b:42036)}},
   doi={10.4007/annals.2004.160.839},
}

\bib{GIK12}{article}{
author={Geronimo, Jeffrey S.},
author={Iliev, Plamen},
author={Knese, Greg},
title={Orthogonality relations for bivariate Bernstein-Szeg\H{o}
  measures},
journal={Contemp. Math.},
date={2012}
}

\bib{GKVW09}{article}{
   author={Grinshpan, Anatolii},
   author={Kaliuzhnyi-Verbovetskyi, Dmitry S.},
   author={Vinnikov, Victor},
   author={Woerdeman, Hugo J.},
   title={Classes of tuples of commuting contractions satisfying the
   multivariable von Neumann inequality},
   journal={J. Funct. Anal.},
   volume={256},
   date={2009},
   number={9},
   pages={3035--3054},
   issn={0022-1236},
   review={\MR{2502431 (2010f:47015)}},
   doi={10.1016/j.jfa.2008.09.012},
}

\bib{hH64}{book}{
   author={Helson, Henry},
   title={Lectures on invariant subspaces},
   publisher={Academic Press},
   place={New York},
   date={1964},
   pages={xi+130},
   review={\MR{0171178 (30 \#1409)}},
}

\bib{HK71}{book}{
   author={Hoffman, Kenneth},
   author={Kunze, Ray},
   title={Linear algebra},
   series={Second edition},
   publisher={Prentice-Hall Inc.},
   place={Englewood Cliffs, N.J.},
   date={1971},
   pages={viii+407},
   review={\MR{0276251 (43 \#1998)}},
}

\bib{JKM12}{article}{
author={Jury, Michael T.},
author={Knese, Greg},
author={McCullough, Scott},
title={Nevanlinna-Pick interpolation on distinguished varieties in the
  bidisk},
journal={J. Funct. Anal.},
volume={262},
date={2012},
number={9},
pages={3812--3838},
doi={10.1016/j.jfa.2012.01.028},
}

\bib{gK10dv}{article}{
   author={Knese, Greg},
   title={Polynomials defining distinguished varieties},
   journal={Trans. Amer. Math. Soc.},
   volume={362},
   date={2010},
   number={11},
   pages={5635--5655},
   issn={0002-9947},
   review={\MR{2661491 (2011f:47022)}},
   doi={10.1090/S0002-9947-2010-05275-4},
}

\bib{gK10}{article}{
   author={Knese, Greg},
   title={Polynomials with no zeros on the bidisk},
   journal={Anal. PDE},
   volume={3},
   date={2010},
   number={2},
   pages={109--149},
   issn={1948-206X},
   review={\MR{2657451 (2011i:42051)}},
}

\bib {gK11}{article}{
   author = {Knese, Greg},
    title = {Kernel Decompositions for Schur Functions on the Polydisk},
   journal = {Complex Analysis and Operator Theory},
   publisher = {Birkh�user Basel},
   issn = {1661-8254},
   pages = {1093-1111},
   volume = {5},
   number = {4},
   url = {http://dx.doi.org/10.1007/s11785-010-0048-7},
   note = {10.1007/s11785-010-0048-7},
   year = {2011}
}

\bib{gKrifitsac}{article}{
   author={Knese, Greg},
   title={Rational inner functions in the Schur-Agler class of the polydisk},
   journal={Publ. Mat.},
   volume={55},
   date={2011},
   number={2},
   pages={343--357},
   issn={0214-1493},
   review={\MR{2839446}},
   doi={10.5565/PUBLMAT\_55211\_04},
}

\bib{gKsacrifott}{article}{
   author={Knese, Greg},
   title={Schur-Agler class rational inner functions on the tridisk},
   journal={Proc. Amer. Math. Soc.},
   volume={139},
   date={2011},
   number={11},
   pages={4063--4072},
   issn={0002-9939},
   review={\MR{2823051}},
   doi={10.1090/S0002-9939-2011-10975-4},
}

\bib{aK89}{article}{
   author={Kummert, Anton},
   title={Synthesis of two-dimensional lossless $m$-ports with prescribed
   scattering matrix},
   journal={Circuits Systems Signal Process.},
   volume={8},
   date={1989},
   number={1},
   pages={97--119},
   issn={0278-081X},
   review={\MR{998029 (90e:94048)}},
   doi={10.1007/BF01598747},
}

\bib{aK89b}{article}{
   author={Kummert, Anton},
   title={Synthesis of $3$-D lossless first-order one ports with lumped
   elements},
   journal={IEEE Trans. Circuits and Systems},
   volume={36},
   date={1989},
   number={11},
   pages={1445--1449},
   issn={0098-4094},
   review={\MR{1020132}},
   doi={10.1109/31.41302},
}

\bib{NN86}{book}{
   author={Nikol{\cprime}ski{\u\i}, N. K.},
   title={Treatise on the shift operator},
   series={Grundlehren der Mathematischen Wissenschaften [Fundamental
   Principles of Mathematical Sciences]},
   volume={273},
   note={Spectral function theory;
   With an appendix by S. V. Hru\v s\v cev [S. V. Khrushch\"ev] and V. V.
   Peller;
   Translated from the Russian by Jaak Peetre},
   publisher={Springer-Verlag},
   place={Berlin},
   date={1986},
   pages={xii+491},
   isbn={3-540-15021-8},
   review={\MR{827223 (87i:47042)}},
}

\bib{wR69}{book}{
   author={Rudin, Walter},
   title={Function theory in polydiscs},
   publisher={W. A. Benjamin, Inc., New York-Amsterdam},
   date={1969},
   pages={vii+188},
   review={\MR{0255841 (41 \#501)}},
}

\bib{wR71}{book}{
   author={Rudin, Walter},
   title={Lectures on the edge-of-the-wedge theorem},
   note={Conference Board of the Mathematical Sciences Regional Conference
   Series in Mathematics, No. 6},
   publisher={American Mathematical Society},
   place={Providence, R.I.},
   date={1971},
   pages={v+30},
   review={\MR{0310288 (46 \#9389)}},
}

\bib{eS79}{article}{
   author={Sawyer, Eric},
   title={Good/irreducible inner functions on a polydisc},
   language={English, with French summary},
   journal={Ann. Inst. Fourier (Grenoble)},
   volume={29},
   date={1979},
   number={2},
   pages={vi, 185--210},
   issn={0373-0956},
   review={\MR{539696 (82a:32011)}},
}

\bib{hW10}{article}{
   author={Woerdeman, Hugo J.},
   title={A general Christoffel-Darboux type formula},
   journal={Integral Equations Operator Theory},
   volume={67},
   date={2010},
   number={2},
   pages={203--213},
   issn={0378-620X},
   review={\MR{2650771 (2012c:47089)}},
   doi={10.1007/s00020-010-1776-9},
}

\end{biblist}
\end{bibdiv}

\end{document}